\definecolor{lavender}{rgb}{0.5,0,1.0}
\crefname{conjecture}{Conjecture}{Conjectures}
\newcommand{\ST}{\operatorname{ST}}
\newcommand{\cyc}{\operatorname{cyc}}
\DeclareMathOperator{\Pop}{\mathsf{Pop}}
\newenvironment{enumerate*}
  {\begin{enumerate}[(I)]
    \setlength{\itemsep}{10pt}
    \setlength{\parskip}{0pt}}
  {\end{enumerate}}
\newtheorem{theorem}{Theorem}[section]
\newtheorem{proposition}[theorem]{Proposition}
\newtheorem{corollary}[theorem]{Corollary}
\newtheorem{conjecture}[theorem]{Conjecture}
\newtheorem{lemma}[theorem]{Lemma}
\theoremstyle{definition}
\newtheorem{definition}[theorem]{Definition}
\newtheorem{remark}[theorem]{Remark}
\newtheorem{example}[theorem]{Example}
\DeclareMathOperator{\ddeg}{ddeg}
\DeclareMathOperator{\NC}{NC}
\DeclareMathOperator{\ShNC}{ShNC}
\DeclareMathOperator{\HNC}{HNC}
\DeclareMathOperator{\Krew}{Krew}
\DeclareMathOperator{\Tam}{Tam}
\DeclareMathOperator{\row}{Row}
\DeclareMathOperator{\Rot}{Rot}
\DeclareMathOperator{\SSS}{SS}
\DeclareMathOperator{\Cat}{Cat}
\DeclareMathOperator{\Orb}{Orb}
\DeclareMathOperator{\N}{N}
\DeclareMathOperator{\E}{E}
\DeclareMathOperator{\rk}{rk}
\DeclareMathOperator{\JJ}{\mathscr{J}}
\DeclareMathOperator{\MM}{\mathscr{M}}
\newcommand{\Inv}{\mathrm{Inv}}
\newcommand{\dfn}[1]{\textcolor{blue}{\emph{#1}}}
\begin{document}

\title[]{Rowmotion on $m$-Tamari and BiCambrian Lattices}
\subjclass[2010]{}

\author[]{Colin Defant}
\address[]{Department of Mathematics, Harvard University, Cambridge, MA 02139, USA}
\email{colindefant@gmail.com}
\author[]{James Lin}
\address[]{Department of Mathematics and Computer Science, Massachusetts Institute of Technology, Cambridge, MA 02139, USA}
\email{jameslin@mit.edu}

\begin{abstract}
Thomas and Williams conjectured that rowmotion acting on the rational $(a,b)$-Tamari lattice has order $a+b-1$. We construct an equivariant bijection that proves this conjecture when $a\equiv 1\pmod b$; in fact, we determine the entire orbit structure of rowmotion in this case, showing that it exhibits the cyclic sieving phenomenon. We additionally show that the down-degree statistic is homomesic for this action. In a different vein, we consider the action of rowmotion on Barnard and Reading's biCambrian lattices. Settling a different conjecture of Thomas and Williams, we prove that if $c$ is a bipartite Coxeter element of a coincidental-type Coxeter group $W$, then the orbit structure of rowmotion on the $c$-biCambrian lattice is the same as the orbit structure of rowmotion on the lattice of order ideals of the doubled root poset of type $W$. 
\end{abstract}

\maketitle

\bigskip

\section{Introduction}\label{Sec:Intro} 

The growing field of \emph{dynamical algebraic combinatorics} seeks to understand dynamical systems arising from operators defined on objects in algebraic combinatorics. One of the most vigorously-studied operators in this area is \dfn{rowmotion}, which is an operator $\row$ that acts on the set $\mathcal J(P)$ of order ideals of a finite poset $P$. More precisely, $\row\colon \mathcal J(P)\to\mathcal J(P)$ is defined by\footnote{Several authors define rowmotion to be the inverse of the operator that we have defined. Our definition agrees with the conventions used in \cite{Barnard, Semidistrim, ThomasWilliams}.} \[\row(I)=\{x\in P:x\not\geq y\text{ for all }y\in\max(I)\},\] where $\max(I)$ is the set of maximal elements of $I$. When ordered by inclusion, the order ideals of $P$ form a lattice; in fact, Birkhoff's Fundamental Theorem of Finite Distributive Lattices \cite{Birkhoff} states that a finite lattice is distributive if and only if it is isomorphic to the lattice of order ideals of a finite poset. Thus, one can view rowmotion as an operator defined on a distributive lattice. 

Another very influential operator is \emph{Kreweras complementation}, which Kreweras introduced in 1972 in its most basic form in \cite{Kreweras} as a natural anti-automorphism of the lattice of noncrossing set partitions of the set $[n]=\{1,\ldots,n\}$. More generally, if one chooses a Coxeter element $c$ of a finite irreducible Coxeter group $W$, then one can consider the \dfn{noncrossing partition lattice} $\NC(W,c)$, which is the interval $[e,c]$ in the absolute order on $W$. In this setting, \dfn{Kreweras complementation} is the operator that sends $w$ to $w^{-1}c$. Associated to the pair $(W,c)$ is a \emph{Cambrian lattice}, which is a lattice on the set of $c$-sortable elements of $W$ \cite{ReadingCambrian}. Using Reading's bijection between $\NC(W,c)$ and the set of $c$-sortable elements of $W$ \cite{ReadingClusters}, one can view Kreweras complementation as an operator on a Cambrian lattice. 

In 2006, Thomas \cite{Thomas} introduced \emph{trim lattices} as generalizations of distributive lattices that are not necessarily graded. Another natural family of lattices that generalize distributive lattices is the family of \emph{semidistributive lattices}. All distributive lattices and finite Cambrian lattices are both trim and semidistributive. Barnard \cite{Barnard} gave a natural definition of rowmotion on semidistributive lattices, and Thomas and Williams \cite{ThomasWilliams} gave a natural definition of rowmotion on trim lattices; in recent work, the first author and Williams provided a simultaneous generalization of both of these definitions to the even broader class of \emph{semidistrim lattices} \cite{Semidistrim}. These definitions generalize that of classical rowmotion on distributive lattices and that of Kreweras complementation on finite Cambrian lattices.

Prototypical examples of Cambrian lattices are provided by the \emph{Tamari lattices}, which Tamari originally introduced in 1962 \cite{Tamari}. These lattices, which are often defined on Dyck paths, have grown in prominence to become fundamental objects in algebraic combinatorics \cite{Muller}. In 2012, Bergeron and Pr\'eville-Ratelle \cite{Bergeron} defined generalizations of Tamari lattices called \emph{$m$-Tamari lattices} in order to state conjectural combinatorial interpretations of the dimensions of certain spaces arising from the study of trivariate diagonal harmonics. We denote the $n$-th $m$-Tamari lattice by $\Tam_n(m)$. These lattices have now received a great deal of attention (see \cite{BousquetRep, BousquetIntervals, Chatel} and the references therein). 

The $m$-Tamari lattices are special examples of \emph{rational Tamari lattices}. Each rational Tamari lattice is trim and semidistributive, so it comes equipped with a natural rowmotion operator. Thomas and Williams conjectured a formula for the order of rowmotion on rational Tamari lattices; one of our main results settles this conjecture for $m$-Tamari lattices by showing that the order of $\row\colon\Tam_n(m)\to\Tam_n(m)$ is $(m+1)n$. In fact, we will determine the entire orbit structure of rowmotion on $\Tam_n(m)$, showing that it exhibits the cyclic sieving phenomenon with respect to the polynomial $\Cat_n^{(m)}(q)=\frac{1}{[(m+1)n+1]_q}{(m+1)n+1\brack n}_q$. Moreover, we will conjecture that rowmotion on an arbitrary rational Tamari lattice exhibits a similar cyclic sieving phenomenon. 

A popular and ubiquitous notion in dynamical algebraic combinatorics is that of \emph{homomesy}, which occurs when a statistic on a set of objects has the same average along every orbit of an operator. The \dfn{down-degree statistic} on a poset $P$ is the function $\ddeg\colon P\to\mathbb R$ given by $\ddeg(x)=\lvert\{y\in P:y\lessdot x\}\rvert$. The articles \cite{AST, ProppRoby, RushWang} prove that $\ddeg$ is homomesic for rowmotion on certain distributive lattices, and the article \cite{HopkinsCDE} establishes the homomesy of $\ddeg$ for rowmotion on certain semidistributive lattices. As a byproduct of our methods, we will show that this story extends into the realm of $m$-Tamari lattices. Namely, we will prove that the down-degree statistic on $\Tam_n(m)$ is homomesic for rowmotion with average $m(n-1)/(m+1)$. We will also conjecture that $\ddeg$ on the rational Tamari lattice $\Tam(a,b)$ is homomesic for rowmotion with average $(a-1)(b-1)/(a+b-1)$. 

In a different direction, we will consider rowmotion operators arising from Coxeter-biCatalan combinatorics, which Barnard and Reading introduced in \cite{BarnardReading}. Each finite irreducible Coxeter group of coincidental type has an associated \emph{doubled root poset} that is minuscule, and the orbit structure of rowmotion acting on the set of order ideals of that doubled root poset is already understood. In the same article, Barnard and Reading introduced the \emph{biCambrian lattice} associated to a pair $(W,c)$, where $W$ is a finite irreducible Coxeter group and $c$ is a Coxeter element of $W$. Each biCambrian lattice is semidistributive and, consequently, comes equipped with a rowmotion operator. Thomas and Williams \cite{ThomasWilliams} conjectured that if $W$ is a finite Coxeter group of coincidental type and $c$ is a bipartite Coxeter element of $W$, then the orbit structure of rowmotion on the biCambrian lattice associated to $(W,c)$ is the same as the orbit structure of rowmotion acting on the lattice of order ideals of the doubled root poset of $W$. Our second main result is a proof of this conjecture. 

In \cref{Sec:Background}, we provide background information concerning rowmotion, cyclic sieving, and homomesy. We will only need to define rowmotion for semidistributive lattices (and not trim lattices) because all of the specific lattices that we consider belong to this family. \cref{Sec:nu-Tamari} defines $\nu$-Tamari lattices, of which rational Tamari lattices are specific examples. In \cref{sec:describing}, we give an explicit description of rowmotion on $\nu$-Tamari lattices. In \cref{Sec:mTamari}, we specialize our attention to $m$-Tamari lattices, producing an equivariant bijection that transfers from rowmotion on $\Tam_n(m)$ to cyclic rotation on Armstrong's set of \emph{$(m+1)$-shuffle noncrossing partitions} of $[(m+1)n]$. This bijection allows us to prove the aforementioned cyclic sieving phenomenon and homomesy for rowmotion on $\Tam_n(m)$. In \cref{Sec:BiCambrianDoubled}, we shift gears and gather notions from Barnard and Reading's Coxeter-biCatalan combinatorics. Namely, we discuss lattice congruences, doubled root posets, and biCambrian lattices. \cref{Sec:BiCambrianA} is devoted to understanding rowmotion on biCambrian lattices of type $A$ with respect to bipartite Coxeter elements; we produce an equivariant bijection demonstrating that the orbit structure of rowmotion on such a biCambrian lattice is the same as the orbit structure of rowmotion on the lattice of order ideals of the associated doubled root poset of type $A$ (in this case, the doubled root poset is a product of two chains of the same length). In \cref{Sec:BiCambrianB}, we prove the analogous result in type $B$ (in this case, the doubled root poset is a shifted staircase). The remaining cases of the conjecture of Thomas and Williams are for types $H_3$ and $I_2(m)$; \cref{Sec:BiCambrianOther} handles these remaining (not so difficult) cases. Finally, \cref{Sec:Conclusion} mentions some ideas for future research. This includes generalizing our results in \cref{Sec:mTamari} on the cyclic sieving phenomenon and homomesy for rowmotion on $\Tam_n(m)$ to all rational Tamari lattices $\Tam(a,b)$. We also include two conjectures about the behavior of the down-degree statistic on the orbits of rowmotion on the even larger class of $\nu$-Tamari lattices. The first conjecture describes a phenomenon that we call \emph{asymptotic homomesy}; the second concerns the \emph{homometry phenomenon}, which was introduced recently by Elizalde, Plante, Roby, and Sagan \cite{ElizaldeFences}.

\section{Background}\label{Sec:Background}

\subsection{Basic Notation and Terminology}
Given positive integers $a$ and $b$, we write $[a,b]$ for the set $\{a,\ldots,b\}$ of integers weakly between $a$ and $b$, where we make the convention that this set is empty if $a>b$. We also write $[b]$ instead of $[1,b]$. Given a finite set $X$ and an invertible map $f\colon X\to X$, we define the \dfn{order} of $f$ to be the smallest positive integer $\omega$ such that $f^\omega(x)=x$ for all $x\in X$. When we refer to the \dfn{orbit structure} of $f$, we mean the multiset of the sizes of its orbits. 

\subsection{Posets and Lattices}\label{subsec:posets}
We assume basic familiarity with the theory of posets, as discussed in \cite[Chapter~3]{Stanley2}. All posets throughout this article are assumed to be finite and connected (meaning their Hasse diagrams are finite connected graphs). Suppose $P$ is a poset. For $x,y\in P$, we say $y$ \dfn{covers} $x$ and write $x\lessdot y$ to mean that $x<y$ and there does not exist $z\in P$ with $x<z<y$. We say $P$ is \dfn{ranked} if there exists a function $\text{rk}\colon P\to\mathbb Z_{\geq 0}$ such that $\rk(y)=\rk(x)+1$ whenever $x\lessdot y$ and such that there exists $x_0\in P$ with $\rk(x_0)=0$. Note that the rank function $\rk$ is unique if it exists (because $P$ is connected). 

A \dfn{lattice} is a poset $L$ such that any two elements $x,y\in L$ have a unique greatest lower bound, called their \dfn{meet} and denoted $x\wedge y$, and a unique least upper bound, called their \dfn{join} and denoted $x\vee y$. Given a subset $X$ of a finite lattice, we write $\bigwedge X$ for the meet of $X$ (i.e., the greatest lower bound of all elements of $X$). 

An element $j$ in a lattice $L$ is called \dfn{join-irreducible} if it covers exactly $1$ element of $L$. Similarly, an element $m\in L$ is called \dfn{meet-irreducible} if it is covered by exactly $1$ element of $L$. 

\subsection{Rowmotion on Semidistributive Lattices}\label{subsec:distributive}

Let $P$ be a finite poset, and let $\mathcal J(P)$ be the set of order ideals of $P$. As discussed in the introduction, rowmotion is the operator $\row\colon\mathcal J(P)\to\mathcal J(P)$ defined by declaring $\row(I)$ to be the complement in $P$ of the order filter generated by the maximal elements of $I$. The origins of rowmotion date back to the works of Duchet \cite{Duchet}, Brouwer \cite{Brouwer}, and Brouwer--Schriver \cite{BrouwerSchriver}. It was studied further in the 1990's by Deza and Fukada \cite{DezaFukada}, by Cameron and Fon-der-Flass \cite{Cameron}, and by Fon-der-Flass \cite{FonderFlass}. Panyushev \cite{Panyushev} revived interest in rowmotion via several conjectures about root posets that were later settled by Armstrong, Stump, and Thomas \cite{AST}. We refer to \cite{StrikerWilliams,ThomasWilliams} for more detailed historical accounts of rowmotion. 

A lattice $L$ is called \dfn{semidistributive} if the implications \[x\vee y=x\vee z\implies x\vee(y\wedge z)=x\vee y\quad\text{and}\quad x\wedge y=x\wedge z\implies x\wedge(y\vee z)=x\wedge y\] hold for all $x,y,z\in L$. An equivalent definition says that $L$ is semidistributive if and only if for all $x,y\in L$ with $x\leq y$, the set $\{z\in L:z\wedge y=x\}$ has a unique maximal element and the set $\{z\in L:z\vee x=y\}$ has a unique minimal element. Every distributive lattice (i.e., every lattice of order ideals of a finite poset) is semidistributive. We refer the reader to \cite{Barnard, ThomasWilliams, Semidistrim} for the facts about semidistributive lattices that we discuss next. 

Suppose $L$ is a semidistributive lattice. Let $\JJ$ and $\MM$ be the set of join-irreducible elements of $L$ and the set of meet-irreducible elements of $L$, respectively. For each $j\in\JJ$, let $j_*$ be the unique element covered by $j$; for each $m\in\MM$, let $m^*$ be the unique element that covers $m$. There is a bijection $\kappa\colon\JJ\to\MM$ given by $\kappa(j)=\max\{z\in L:z\wedge j=j_*\}$ for all $j\in\JJ$; its inverse is given by $\kappa^{-1}(m)=\min\{z\in L:z\vee m=m^*\}$ for all $m\in\MM$. 

\begin{lemma}\label{lem:propertiesofkappa}
Let $L$ be a semidistributive lattice, and let $\kappa\colon\JJ\to\MM$ be the bijection defined above. We have $\kappa(j)\wedge j=j_*$ and $\kappa(j)\vee j=(\kappa(j))^*$ for all $j\in \JJ$. Furthermore, for each cover relation $x\lessdot y$ in $L$, there is a unique $j_{xy}\in\JJ$ such that $j_{xy}\leq y$ and $\kappa(j_{xy})\geq x$. 
\end{lemma}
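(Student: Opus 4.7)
The first two equalities should fall directly out of the definitions. Since $\kappa(j) = \max\{z : z \wedge j = j_*\}$, the identity $\kappa(j) \wedge j = j_*$ is immediate. For $\kappa(j) \vee j = (\kappa(j))^*$, the plan is to invoke the given expression for $\kappa^{-1}$: setting $m = \kappa(j)$, we have $j = \kappa^{-1}(m) = \min\{z : z \vee m = m^*\}$, so $j \vee \kappa(j) = m^* = (\kappa(j))^*$. No obstacle here.

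For the second part, the plan is to produce $j_{xy}$ explicitly as the minimum element of $U := \{z \in L : z \vee x = y\}$. First I would show that $U$ is closed under meets using the join-semidistributive axiom: if $z_1, z_2 \in U$, then $z_1 \vee x = z_2 \vee x$, so $(z_1 \wedge z_2) \vee x = z_1 \vee x = y$ by semidistributivity, hence $z_1 \wedge z_2 \in U$. Since $L$ is finite, set $u := \bigwedge U$. Next I would verify that $u$ is join-irreducible: if $u = v \vee w$ with $v, w < u$, then minimality forces $v \vee x, w \vee x < y$, and since $x \lessdot y$, we get $v, w \leq x$, hence $u \leq x$ and $u \vee x = x \neq y$, a contradiction. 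Similarly, $u_* \notin U$ forces $u_* \leq x$, and since $u \not\leq x$ we obtain $u \wedge x = u_*$. Therefore $x \in \{z : z \wedge u = u_*\}$, so $x \leq \kappa(u)$, and $u \leq u \vee x = y$. Thus $u$ satisfies both conditions.

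For uniqueness, suppose $j \in \JJ$ with $j \leq y$ and $\kappa(j) \geq x$. Observe first that $j \not\leq x$: otherwise $j \leq \kappa(j)$, forcing $\kappa(j) \wedge j = j$ rather than $j_*$. Since $j \leq y$ and $j \not\leq x$ and $x \lessdot y$, we conclude $j \vee x = y$, i.e.\ $j \in U$, so $u \leq j$. The punchline is to rule out $u < j$: join-irreducibility gives $u \leq j_*$, so $j_* \vee x \geq u \vee x = y$. But $j_* = \kappa(j) \wedge j \leq \kappa(j)$ together with $x \leq \kappa(j)$ yields $j_* \vee x \leq \kappa(j)$, hence $y \leq \kappa(j)$, forcing $j \leq \kappa(j)$ and contradicting $\kappa(j) \wedge j = j_*$ once again. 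Therefore $j = u$.

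The step I expect to be the main obstacle is the uniqueness argument, because it is the one place that really combines both the meet and join characterizations of $\kappa$ together with the cover relation $x \lessdot y$; one has to notice the trick of using $j_* \leq \kappa(j)$ (which itself uses the equality $\kappa(j) \wedge j = j_*$ from the first part) to collapse everything into a contradiction. By contrast, the existence step is a standard canonical-join-representation style construction that only uses one side of the semidistributive axiom.
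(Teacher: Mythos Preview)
Your proof is correct. One tiny remark on the existence argument: when you argue that $u$ is join-irreducible by assuming $u = v \vee w$ with $v,w < u$, you should also note separately that $u \neq \widehat 0$ (since $u \vee x = y > x$), which rules out the remaining way $u$ could fail to be join-irreducible; this is implicit in what you wrote but worth stating.

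As for comparison with the paper: the paper does not actually prove this lemma. It is stated as background and attributed to the references \cite{Barnard, ThomasWilliams, Semidistrim} (see the sentence ``We refer the reader to \cite{Barnard, ThomasWilliams, Semidistrim} for the facts about semidistributive lattices that we discuss next''). So you have supplied a self-contained argument for a result the paper imports wholesale. Your construction of $j_{xy}$ as $\min\{z : z \vee x = y\}$ is the standard one appearing in the theory of canonical join representations (this is exactly how one shows that cover relations in a semidistributive lattice are labeled by join-irreducibles), and your uniqueness argument, which pivots on $j_* \leq \kappa(j)$ to force $y \leq \kappa(j)$, is clean and to the point.
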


The first statement in \cref{lem:propertiesofkappa} actually characterizes the bijection $\kappa$. For each $z\in L$, \cref{lem:propertiesofkappa} allows us to define the \dfn{downward label set} and the \dfn{upward label set} of $z$ by \[\mathcal D(z)=\{j_{xz}:x\lessdot z\}\quad\text{and}\quad\mathcal U(z)=\{j_{zy}:z\lessdot y\}.\] The set $\mathcal D(z)$ is also called the \dfn{canonical join representation} of $z$, while $\kappa(\mathcal U(z))$ is called the \dfn{canonical meet representation} of $z$. 

It turns out that for each $x\in L$, there is a unique element of $L$ whose upward label set is equal to the downward label set of $x$. We define the \dfn{rowmotion} operator on $L$ to be the bijective operator $\row\colon L\to L$ given by \[\mathcal U(\row(x))=\mathcal D(x).\] When $L$ is the lattice of order ideals of a finite poset, this definition agrees with the classical definition of rowmotion given above. This more general definition is originally due to Barnard \cite{Barnard}, who denoted rowmotion by $\overline\kappa$ because it extends the map $\kappa$. Indeed, one can show that $\row(j)=\kappa(j)$ for every $j\in\JJ$. 

Given a lattice $L$ with minimum element $\widehat 0$, the \dfn{pop-stack sorting operator} on $L$ is the operator $\Pop\colon L\to L$ defined by  $\Pop(x)=\bigwedge\{y\in L:y\lessdot x\}$ for $x\neq\widehat 0$ and by $\Pop(\widehat 0)=\widehat 0$. This operator was introduced in \cite{DefantMeeting}, and it was found to have strong connections with rowmotion in \cite{Semidistrim} in the case when $L$ is semidistrim. In particular, if $L$ is semidistributive, then the pop-stack sorting operator affords a useful alternative description of rowmotion. 

\begin{proposition}[{\cite[Section~9]{Semidistrim}}]\label{prop:pop1}
If $L$ is a semidistributive lattice and $x\in L$, then $\row(x)$ is the unique maximal element of the set $\{z\in L:z\wedge x=\Pop(x)\}$. 
\end{proposition}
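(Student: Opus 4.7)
The plan is to break the proof into two parts: first establish that $\row(x)$ lies in the set $S_x := \{z \in L : z \wedge x = \Pop(x)\}$, and then show that $\row(x)$ is the maximum of $S_x$. For the first part, I would use the canonical meet representation: since $\mathcal{U}(\row(x)) = \mathcal{D}(x) = \{j_{yx} : y \lessdot x\}$, we have $\row(x) = \bigwedge_{y \lessdot x} \kappa(j_{yx})$. The core calculation is then that $\kappa(j_{yx}) \wedge x = y$ for each cover $y \lessdot x$: Lemma~\ref{lem:propertiesofkappa} gives $\kappa(j_{yx}) \geq y$, so the meet is at least $y$; conversely, if $\kappa(j_{yx}) \geq x$ then $\kappa(j_{yx}) \geq j_{yx}$, forcing $\kappa(j_{yx}) \wedge j_{yx} = j_{yx}$ instead of $(j_{yx})_*$, which is impossible. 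Thus $\kappa(j_{yx}) \wedge x$ is strictly below $x$ but at least $y$, hence equal to $y$ since $y \lessdot x$. Distributing the meet over $x$, we get $\row(x) \wedge x = \bigwedge_{y \lessdot x} y = \Pop(x)$, so $\row(x) \in S_x$; in particular $\Pop(x) \leq \row(x)$.

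For the second part, semidistributivity implies that $S_x$ is closed under joins (if $z_1, z_2 \in S_x$, the meet-semidistributive law yields $x \wedge (z_1 \vee z_2) = x \wedge z_1 = \Pop(x)$), so $S_x$ has a unique maximum $M \geq \row(x)$. Suppose for contradiction that $\row(x) < M$, and pick $M'$ with $\row(x) \lessdot M' \leq M$. Since $\Pop(x) \leq \row(x) \leq M' \leq M$, we have $\Pop(x) \leq M' \wedge x \leq M \wedge x = \Pop(x)$, hence $M' \wedge x = \Pop(x)$. The cover $\row(x) \lessdot M'$ carries a label $j_{\row(x) M'} \in \mathcal{U}(\row(x)) = \mathcal{D}(x)$, so $j_{\row(x) M'} = j_{y_0 x}$ for some $y_0 \lessdot x$. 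The defining properties of this label give $j_{y_0 x} \leq M'$ and $j_{y_0 x} \leq x$, so $j_{y_0 x} \leq M' \wedge x = \Pop(x) \leq y_0$. But then $j_{y_0 x} \leq y_0 \leq \kappa(j_{y_0 x})$, so $\kappa(j_{y_0 x}) \wedge j_{y_0 x} = j_{y_0 x} \neq (j_{y_0 x})_*$, contradicting Lemma~\ref{lem:propertiesofkappa}. Hence $\row(x) = M$.

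The first part is a direct unwinding of the canonical meet representation. The hard part will be orchestrating the contradiction in the second part: one has to recognize that an upward cover of $\row(x)$ inside $S_x$ would produce a join-irreducible $j_{y_0 x}$ lying weakly below both $x$ and its corresponding cover $y_0$, which is impossible for a canonical join label. Once this observation is in hand the argument is short, but identifying the right interplay between the upward labels of $\row(x)$, the pop-stack operator, and the canonical join structure of $x$ is the main obstacle.
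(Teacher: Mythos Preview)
The paper does not actually prove this proposition; it is quoted as a result from \cite[Section~9]{Semidistrim} and used as a black box. So there is no ``paper's own proof'' to compare against.

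That said, your argument is correct and self-contained, using only the facts recorded in Section~\ref{subsec:distributive}. The first part is clean: the identity $\kappa(j_{yx})\wedge x=y$ for each $y\lessdot x$ is exactly what Lemma~\ref{lem:propertiesofkappa} is designed to give, and taking the meet over all such $y$ yields $\row(x)\wedge x=\Pop(x)$ by associativity of $\wedge$ (your phrase ``distributing the meet over $x$'' is just $\bigl(\bigwedge_i a_i\bigr)\wedge x=\bigwedge_i(a_i\wedge x)$, valid in any lattice). The second part is also sound: meet-semidistributivity guarantees $S_x$ has a maximum $M$, and the key observation is that a putative cover $\row(x)\lessdot M'\leq M$ would force its label $j=j_{\row(x)M'}\in\mathcal U(\row(x))=\mathcal D(x)$ to satisfy $j\leq M'\wedge x=\Pop(x)\leq y_0$, whence $j\leq\kappa(j)$, contradicting $\kappa(j)\wedge j=j_*$. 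The edge case $x=\widehat 0$ is handled implicitly (the empty meet is $\widehat 1$, and there is no cover above $\widehat 1$).
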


\subsection{The Cyclic Sieving Phenomenon}\label{subsec:CSP}

The cyclic sieving phenomenon, which Reiner, Stanton, and White introduced in \cite{CSPDefinition}, occurs when the orbit structure of a finite dynamical system is determined by the evaluations of a specific relatively nice generating function at roots of unity. To make this more precise, let $X$ be a finite set, and let $f\colon X\to X$ be an invertible map of order $\omega$ (i.e., $\omega$ is the smallest positive integer such that $f^\omega(x)=x$ for all $x\in X$). Let $F(q)\in\mathbb C[q]$ be a polynomial in the variable $q$. We say the triple $(X,f,F(q))$ \dfn{exhibits the cyclic sieving phenomenon} if for all integers $d$, the number of elements of $X$ fixed by $f^d$ is $F(e^{2\pi id/\omega})$. Notice that this forces $F(1)$ to be the cardinality of $X$. The cyclic sieving phenomenon has been observed in many settings, including the relevant setting of noncrossing partitions in \cite{BodnarRhoades}.

\subsection{Homomesy and Homometry}\label{Sec:Homomesy}

Suppose $X$ is a finite set and $f\colon X\to X$ is an invertible function. A \dfn{statistic} on $X$ is simply a real-valued function on $X$. We say a statistic $\text{stat}\colon X\to\mathbb R$ is \dfn{homomesic} for $f$ if there exists a constant $c$ such that \[\frac{1}{|\mathcal O|}\sum_{x\in\mathcal O}\text{stat}(x)=c\] for every orbit $\mathcal O$ of $f$. Propp and Roby coined the term \emph{homomesy} in \cite{ProppRoby}. This phenomenon has now become one of the central focuses of dynamical algebraic combinatorics because of its surprising ubiquity and its ability to encapsulate interesting structural aspects of combinatorial dynamical systems. We refer the reader to \cite{Roby} for a very accessible survey of homomesy. 

The very recent article \cite{ElizaldeFences} introduces a new generalization of homomesy called \emph{homometry}. We say a statistic $\text{stat}\colon X\to \mathbb R$ is \dfn{homometric} for $f$ if \[\sum_{x\in\mathcal O}\text{stat}(x)=\sum_{x'\in\mathcal O'}\text{stat}(x')\] whenever $\mathcal O$ and $\mathcal O'$ are orbits of $f$ of the same cardinality. Note that homomesy implies homometry, but not the other way around. In \cref{Sec:Conclusion}, we will conjecture that the down-degree statistic is homometric for rowmotion on an arbitrary $\nu$-Tamari lattice.

\section{$\nu$-Tamari Lattices}\label{Sec:nu-Tamari}

\subsection{Lattice Paths}\label{subsec:LatticePaths} 
For us, a \dfn{lattice path} is a finite path in the plane that starts at the origin and uses unit north steps and unit east steps. We denote north and east steps by $\text{N}$ and $\text{E}$, respectively, thereby identifying a lattice path with a finite word over the alphabet $\{\text{N},\text{E}\}$. We use exponents to denote concatenation of a word with itself; for instance, $(\text{NE}^3)^2=\text{NEEENEEE}$. Given a lattice path $\nu$, let $\Tam(\nu)$ be the set of lattice paths lying weakly above $\nu$ that have the same endpoints as $\nu$. For example, if $\nu=\text{ENNEEEENNE}$ is the lattice path shown in black in \cref{FigMeet2}, then the path $\mu=\text{NENENEEENE}$ shown in red is in $\Tam(\nu)$. 

\begin{figure}[ht]
  \begin{center}{\includegraphics[height=4.2cm]{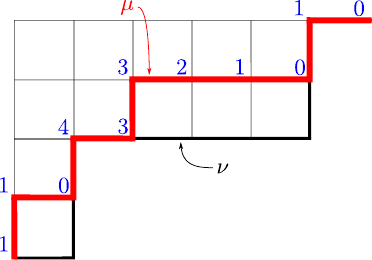}}
  \end{center}
  \caption{The lattice path $\mu=\text{NENENEEENE}$ in the $\nu$-Tamari lattice $\Tam(\nu)$, where $\nu=\text{ENNEEEENNE}$. Each lattice point $p$ is labeled with its horizontal distance. }\label{FigMeet2}
\end{figure}

Suppose $\mu\in\Tam(\nu)$. Given a lattice point $p$ on $\mu$, define the \dfn{horizontal distance} of $p$ to be the maximum number of east steps that can be taken, starting at $p$, before crossing $\nu$; we illustrate this definition in \cref{FigMeet2}. Now suppose $p$ is a lattice point on $\mu$ that is preceded by an east step and followed by a north step in $\mu$. Let $p'$ be the first lattice point on $\mu$ that appears after $p$ and has the same horizontal distance as $p$. If we let $D_{[p,p']}$ be the subpath of $\mu$ that starts at $p$ and ends at $p'$, then we can write $\mu=X\text{E}D_{[p,p']}Y$ for some lattice paths $X$ and $Y$. Let $\mu'=XD_{[p,p']}\text{E}Y$. Then $\mu'\in \Tam(\nu)$. In this case, we write $\mu\lessdot \mu'$. See \cref{FigMeet3}. 

\begin{figure}[ht]
  \begin{center}{\includegraphics[height=3.923cm]{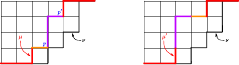}}
  \end{center}
  \caption{The lattice path $\mu$ on the left is covered by the lattice path $\mu'$ on the right in $\Tam(\nu)$.}\label{FigMeet3}
\end{figure}

The relations $\mu\lessdot\mu'$ described in the previous paragraph form the cover relations of a partial order $\leq$ on $\Tam(\nu)$. Pr\'eville-Ratelle and Viennot \cite{PrevilleViennot} introduced this partial order and proved that it is a lattice. In fact, they proved that $\Tam(\nu)$ is an interval of a Tamari lattice, and it is well known that Tamari lattices are semidistributive. Because intervals of semidistributive lattices are semidistributive, it follows that $\Tam(\nu)$ is semidistributive. This means that we have a rowmotion operator $\row\colon\Tam(\nu)\to\Tam(\nu)$. 

When $\nu=(\text{NE}^m)^n$, the lattice $\Tam(\nu)$ is precisely the $n$-th \dfn{$m$-Tamari lattice}, which we denote by $\Tam_n(m)$. When $\nu=(\text{NE})^n$, $\Tam(\nu)$ is the classical $n$-th \dfn{Tamari lattice}. Given relatively prime positive integers $a$ and $b$, let $\Tam(a,b)$ be the set of lattice paths from $(0,0)$ to $(a,b)$ that lie strictly above the line $y=(b/a)x$ except at their endpoints. We have $\Tam(a,b)=\Tam(\nu_{a,b})$ for an appropriately-chosen lattice path $\nu_{a,b}$, so we can view $\Tam(a,b)$ as a lattice. When $a=mn+1$ and $b=n$, we have $\nu_{a,b}=(\text{N}\text{E}^m)^n\text{E}$, and it is straightforward to check that $\Tam(a,b)$ is isomorphic to $\Tam_n(m)$. Thomas and Williams stated the following conjecture. 

\begin{conjecture}[\cite{ThomasWilliams}]\label{conj:ThomasWilliams}
If $a$ and $b$ are relatively prime positive integers, then the rowmotion operator $\row\colon\Tam(a,b)\to\Tam(a,b)$ has order $a+b-1$. 
\end{conjecture}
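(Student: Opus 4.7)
The plan is to prove the conjecture in the special case $b \equiv 1 \pmod a$, which is precisely the case where $\Tam(a,b)$ is isomorphic to the $m$-Tamari lattice $\Tam_n(m)$ (taking $a = mn+1$ and $b = n$, so that $a+b-1 = (m+1)n$). The strategy is to produce an equivariant bijection from $(\Tam_n(m), \row)$ to a combinatorial model carrying a manifest cyclic action of order exactly $(m+1)n$. A natural candidate is Armstrong's set of $(m+1)$-shuffle noncrossing partitions of $[(m+1)n]$ equipped with the geometric cyclic rotation, in analogy with the classical identification of Kreweras complementation on $\NC_{n+1}$ with rowmotion on the usual Tamari lattice.

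First I would work out an explicit description of $\row$ on a general $\nu$-Tamari lattice $\Tam(\nu)$. Since $\Tam(\nu)$ is a sublattice of a Tamari lattice, it is semidistributive, so the identity $\mathcal U(\row(x)) = \mathcal D(x)$ and the alternative description in Proposition~\ref{prop:pop1} apply. This reduces the task to identifying the join-irreducibles of $\Tam(\nu)$ combinatorially; they should correspond to \emph{minimal bumps} of $\nu$, i.e., paths obtained from $\nu$ by swapping a single occurrence of EN into NE. I would then describe the bijection $\kappa\colon \JJ\to\MM$ explicitly in terms of the horizontal distance statistic visible in Figure~\ref{FigMeet2}. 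Specializing to $\nu=(\text{NE}^m)^n$, I would define the candidate bijection $\Phi\colon\Tam_n(m)\to\ShNC_n(m+1)$ by labeling the valleys of an $m$-Tamari path with elements of $[(m+1)n]$ read off along the path and using the planar nesting of valleys to produce the noncrossing blocks of the shuffle partition.

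The main obstacle is then equivariance: verifying $\Phi\circ\row = \Rot\circ\Phi$, where $\Rot$ is one-step cyclic rotation on $\ShNC_n(m+1)$. A direct cover-by-cover check, tracking how each bump of a path moves under a single application of rowmotion, is likely to be combinatorially painful. A cleaner approach is to characterize $\row$ by the abstract condition from Lemma~\ref{lem:propertiesofkappa}, namely that the downward label set of $x$ equals the upward label set of $\row(x)$, and then verify this condition for $\Phi^{-1}\circ\Rot\circ\Phi$ by computing $\mathcal D$ and $\mathcal U$ directly in the shuffle noncrossing picture, where rotation by one step visibly exchanges the analogous data.

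Once equivariance is established, the order of $\row$ on $\Tam_n(m)$ equals the order of $\Rot$ on $\ShNC_n(m+1)$, which is $(m+1)n = a+b-1$, proving the conjecture in this case. As an immediate bonus, the entire orbit structure transfers across $\Phi$, yielding the cyclic sieving phenomenon with polynomial $\Cat_n^{(m)}(q)$ and the homomesy of $\ddeg$ with average $m(n-1)/(m+1)$, since the corresponding statements on the shuffle-noncrossing side reduce to classical orbit-counting on noncrossing partitions under rotation.
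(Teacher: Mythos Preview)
Your high-level plan coincides with the paper's: build an equivariant bijection $\Psi\colon(\Tam_n(m),\row)\to(\ShNC^{(m)}(n),\Rot)$ and read off the order $(m+1)n$, the cyclic sieving, and the homomesy. Where you diverge is in the implementation. The paper does not work with lattice paths or join-irreducibles directly; it passes to Ceballos--Padrol--Sarmiento's $\nu$-bracket vectors and derives a completely explicit ``slow-motion'' description of rowmotion (Theorem~\ref{thm:rowmotionbracket}) via the pop-stack operator and Proposition~\ref{prop:pop1}, rather than via the $\kappa$ map on join-irreducibles. The bijection $\Psi$ is then defined by a concrete successor function $\varphi_{\mathsf b}$ on positions of the bracket vector, and equivariance is proved by the direct entry-by-entry computation you dismissed as ``combinatorially painful'' (Lemmas~\ref{lem:Claim1} and~\ref{lem:Claim2}); the bracket-vector coordinates are exactly what make this tractable.

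Your preferred ``cleaner approach''---computing $\mathcal D$ and $\mathcal U$ on the $\ShNC$ side and checking that rotation exchanges them---has a real gap: $\ShNC^{(m)}(n)$ carries no intrinsic lattice structure, so there is no independent meaning for downward or upward label sets there; you would have to transport the lattice across $\Phi$, at which point you are back to analyzing covers in $\Tam_n(m)$. A smaller point: the join-irreducibles of $\Tam(\nu)$ are not the paths obtained from $\nu$ by a single $\text{EN}\to\text{NE}$ swap---those are the atoms; a general join-irreducible covers exactly one element but may sit far above $\nu$. This is another reason the paper's bracket-vector and $\Pop$-based route is cleaner than the join-irreducible route you sketch.
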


In \cref{cor:mainnuTamari}, we will determine the exact orbit structure of rowmotion on $\Tam_n(m)$, thereby settling and extending \cref{conj:ThomasWilliams} in the case where $a=mn+1$ and $b=n$. We will also formulate a stronger version of \cref{conj:ThomasWilliams} for arbitrary coprime $a$ and $b$ in \cref{THING2}.

\subsection{$\nu$-Bracket Vectors}

There is a very useful alternative description of $\nu$-Tamari lattices due to Ceballos, Padrol, and Sarmiento \cite{Ceballos2}. In what follows, we make the convention that vectors are $0$-indexed. For example, we would say that the number $8$ appears in positions $0$ and $4$ in the vector $(8,3,6,5,8,6)$. Our convention is that $\mathsf b_i$ always denotes the entry in position $i$ of a vector $\mathsf b$. 

\begin{definition}\label{DefMeet2}
Fix a lattice path $\nu$ that starts at $(0,0)$ and ends at $(\ell-n,n)$. Let us write ${{\bf b}(\nu)=(b_0(\nu),\ldots,b_{\ell}(\nu))}$ for the vector obtained by reading the heights (i.e., $y$-coordinates) of the lattice points on $\nu$ in the order they appear in $\nu$. For $0\leq k\leq n$, let $f_k$ be the maximum index such that $b_{f_k}(\nu)=k$. We call $f_0,\ldots,f_n$ the \dfn{fixed positions} of $\nu$. A \dfn{$\nu$-bracket vector} is an integer vector $\mathsf{b}=(\mathsf b_0,\ldots,\mathsf b_{\ell})$ such that:
\begin{enumerate}[(I)]
\item $\mathsf b_{f_k}=k$ for all $k\in[0,n]$; 
\item\label{Item2} $b_i(\nu)\leq \mathsf b_i\leq n$ for all $i\in[0,\ell]$;
\item\label{Item3} if $\mathsf b_i=k$, then $\mathsf b_j\leq k$ for all $j\in[i+1, f_k]$. 
\end{enumerate} 
\end{definition}

\begin{remark}\label{rem:121}
    It is often useful to note that condition~\eqref{Item3} in \cref{DefMeet2} can be replaced with the condition that $\mathsf b$ avoids the pattern $121$; this condition means that there do not exist indices $i_1<i_2<i_3$ such that $\mathsf b_{i_1}=\mathsf b_{i_3}<\mathsf b_{i_2}$. 
\end{remark}

\begin{remark}\label{rem:decreasing}
If $\mathsf b$ is a $\nu$-bracket vector, then ${\mathsf b_{f_k+1}\geq\mathsf b_{f_k+2}\geq\cdots\geq\mathsf b_{f_{k+1}}}$ for each $k\in[-1,n-1]$, where we make the convention $f_{-1}=-1$. 
\end{remark}

Define the \dfn{componentwise} partial order $\leq$ on the set of $\nu$-bracket vectors by the condition that $\mathsf b\leq\mathsf b'$ if $\mathsf b_i\leq\mathsf b_i'$ for all $i\in[0,\ell]$. It is straightforward to check that the componentwise minimum $\min(\mathsf{b},\mathsf{b}')=(\min\{\mathsf b_0,\mathsf b_0'\},\ldots,\min\{\mathsf b_{\ell},\mathsf b_{\ell}'\})$ of two $\nu$-bracket vectors $\mathsf b$ and $\mathsf b'$ is also a $\nu$-bracket vector. In fact, the set of $\nu$-bracket vectors forms a lattice in which the meet operation is given by the componentwise minimum: $\mathsf b\wedge\mathsf b'=\min(\mathsf b,\mathsf b')$. Ceballos, Padrol, and Sarmiento \cite{Ceballos2} showed that this lattice is isomorphic to $\Tam(\nu)$ (using an explicit bijection from the set of $\nu$-bracket vectors to $\Tam(\nu)$ ). Therefore, by abuse of notation, we will henceforth write $\Tam(\nu)$ to refer to the lattice of $\nu$-bracket vectors with the componentwise order. Similarly, $\Tam_n(m)$ will denote the lattice of $\nu$-bracket vectors when $\nu=(\text{N}\text{E}^m)^n$. This convention should not lead to any confusion since we will no longer be dealing with lattice paths. The Hasse diagram of $\Tam_3(2)$ appears in \cref{FigRowTam2}.

\section{Describing Rowmotion on $\nu$-Tamari Lattices}\label{sec:describing}
Let us fix a lattice path $\nu$ that starts at $(0,0)$ and ends at $(\ell-n,n)$. Let $f_0,\ldots,f_n$ be the fixed positions of $\nu$. As before, $\Tam(\nu)$ denotes the $\nu$-Tamari lattice, which we view as the lattice of $\nu$-bracket vectors under the componentwise order. Recall that there is a rowmotion operator $\row\colon\Tam(\nu)\to\Tam(\nu)$ because $\Tam(\nu)$ is semidistributive.

Recall that $\mathsf b_i$ always denotes the entry in position $i$ of $\mathsf b$. Let $\mathsf b\leftarrow_it$ be the vector obtained from $\mathsf b$ by replacing the entry in position $i$ with $t$. For instance, \[(0,1,1,2,2,3,4,5)\leftarrow_35=(0,1,1,5,2,3,4,5).\] Recall that for $k\in[0,n]$, every $\nu$-bracket vector has the entry $k$ in position $f_k$. For $\mathsf{b}\in\Tam(\nu)$ and $0\leq i\leq \ell$, we are going to define integers $\zeta_i(\mathsf b)$ and $\eta_i(\mathsf b)$; the primary motivation for these definitions is that they will allow us to describe rowmotion on $\Tam(\nu)$ via a step-by-step procedure. The $i$-th step in this procedure will either do nothing or replace the entry in a $\nu$-bracket vector $\mathsf b$ with $\zeta_i(\mathsf b)$ or $\eta_i(\mathsf b)$. 

For $\mathsf{b}\in\Tam(\nu)$ and $0\leq i\leq \ell$, let
\[\zeta_i(\mathsf{b})=\min\{\mathsf b_j:j\in[0,i-1]\text{ and }\mathsf b_j\geq \mathsf b_i\},\] with the convention that $\zeta_i(\mathsf b)=n$ if $\{\mathsf b_j:j\in[0,i-1]\text{ and }\mathsf b_j\geq \mathsf b_i\}=\emptyset$. Let \[\Delta(\mathsf b)=\{i\in[0,\ell-1]:\mathsf b_i>\mathsf b_{i+1}\}.\] Suppose $i\in\Delta(\mathsf b)$, and let $k=\mathsf b_{i+1}$. We have $b_i(\nu)=b_{i+1}(\nu)\leq k\leq \mathsf b_i-1$, and it follows from condition~\eqref{Item3} in \cref{DefMeet2} that $\mathsf b_j\leq k$ for all $j\in[i+1,f_{k}]$. Therefore, we can define \[\eta_i(\mathsf b)=\max\{h\in[b_i(\nu),\mathsf b_i-1]:\mathsf b_j\leq h\text{ for all }j\in[i+1,f_h]\}.\] On the other hand, if $i'\in[0,\ell]\setminus\Delta(\mathsf{b})$, then we let $\eta_{i'}(\mathsf{b})=\mathsf{b}_{i'}$.

In the following proposition, we write $\Pop$ for the pop-stack sorting operator on $\Tam(\nu)$ (see \cref{subsec:distributive}).

\begin{proposition}[{\cite[Proposition~4.4]{DefantMeeting}}]\label{prop:pop2}
For each $\mathsf b\in \Tam(\nu)$, we have \[\Pop(\mathsf{b})=(\eta_0(\mathsf{b}),\ldots,\eta_\ell(\mathsf{b})).\]    
\end{proposition}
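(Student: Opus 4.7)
The strategy is to use the componentwise description of the meet in $\Tam(\nu)$: since $\mathsf b \wedge \mathsf b' = \min(\mathsf b, \mathsf b')$, I have $\Pop(\mathsf b) = \bigwedge\{\mathsf c \in \Tam(\nu) : \mathsf c \lessdot \mathsf b\}$ equal to the componentwise minimum of the lower covers. So the proof reduces to identifying the lower covers of $\mathsf b$ and computing the coordinatewise minimum.

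\textbf{Key lemma I would prove first.} The elements of $\Tam(\nu)$ covered by $\mathsf b$ are exactly the vectors $\mathsf b^{(i)} := \mathsf b \leftarrow_i \eta_i(\mathsf b)$ for $i \in \Delta(\mathsf b)$. For validity of $\mathsf b^{(i)}$ as a $\nu$-bracket vector, I would check the three conditions of Definition~\ref{DefMeet2}: condition (I) holds because no fixed position can be a descent (if $i = f_k$, then $\mathsf b_{i+1} \geq b_{i+1}(\nu) = k+1 > k = \mathsf b_i$); condition (II) is built into the definition of $\eta_i$; and condition (III), read as 121-avoidance via Remark~\ref{rem:121}, follows from a short case analysis in which position $i$ plays one of the three roles, using the observations that $\eta_i \geq b_i(\nu)$ forces $i \leq f_{\eta_i}$ (since the $f_k$ are increasing in $k$), and that the defining property of $\eta_i$ forbids a value exceeding $\eta_i$ anywhere in the window $[i+1, f_{\eta_i}]$. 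To see that $\mathsf b^{(i)}$ is actually a cover, note that any $\mathsf c$ with $\mathsf b^{(i)} < \mathsf c < \mathsf b$ must agree with $\mathsf b$ off position $i$, so $\mathsf c_i$ lies in $[\eta_i(\mathsf b)+1, \mathsf b_i - 1]$; by maximality in the definition of $\eta_i(\mathsf b)$, no such $\mathsf c_i$ satisfies condition (III).

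\textbf{Exhausting the lower covers.} To see that every element covered by $\mathsf b$ has this form, I would translate through the lattice-path description of covers in $\Tam(\nu)$ recalled in Section~\ref{subsec:LatticePaths}: the cover relations are generated by choosing a lattice point preceded by an east step and followed by a north step, and swapping the subsequent horizontal-distance-matched subpath with that east step. Under the Ceballos--Padrol--Sarmiento bijection \cite{Ceballos2}, these distinguished lattice points translate into descent positions of the bracket vector, and the effect on the bracket vector is precisely to change the single entry at that descent position to the largest value that still satisfies conditions (II) and (III) — that is, to $\eta_i(\mathsf b)$. Thus the lower covers of $\mathsf b$ are in natural bijection with $\Delta(\mathsf b)$, given by $i \mapsto \mathsf b^{(i)}$.

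\textbf{Computing the meet, and the main obstacle.} Once the covers are in hand, evaluate the componentwise minimum entrywise: for $j \notin \Delta(\mathsf b)$, every $\mathsf b^{(i)}$ (with $i \in \Delta(\mathsf b)$) has $\mathsf b^{(i)}_j = \mathsf b_j = \eta_j(\mathsf b)$ by the convention on non-descent positions; for $j \in \Delta(\mathsf b)$, the minimum is attained by $\mathsf b^{(j)}$, which contributes $\eta_j(\mathsf b)$, while every other $\mathsf b^{(i)}$ contributes the larger value $\mathsf b_j$. Either way the $j$-th coordinate of the meet equals $\eta_j(\mathsf b)$, which is the claim. The main obstacle I expect is the exhaustion step: producing the full set of lower covers from the bracket-vector data alone, without invoking the lattice-path bijection, would require a direct argument that any $\mathsf c \leq \mathsf b$ is dominated by some $\mathsf b^{(i)}$, and the cleanest path there is indeed to route through \cite{Ceballos2}. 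The remaining validity and coverage checks are delicate but mechanical once the inequality $i \leq f_{\eta_i(\mathsf b)}$ is established.
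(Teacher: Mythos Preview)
The paper does not supply its own proof of this proposition: it is quoted verbatim from \cite[Proposition~4.4]{DefantMeeting} and used as a black box. So there is no in-paper argument to compare against; the relevant question is simply whether your sketch stands on its own.

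It does. Your outline is essentially the argument one finds in \cite{DefantMeeting}: identify the lower covers of $\mathsf b$ as exactly the vectors obtained by replacing the entry at a descent position $i\in\Delta(\mathsf b)$ by $\eta_i(\mathsf b)$, and then take the componentwise minimum. Your verification that $\mathsf b\leftarrow_i\eta_i(\mathsf b)$ is a valid $\nu$-bracket vector and that nothing can lie strictly between it and $\mathsf b$ is correct (the interval $[\mathsf b\leftarrow_i\eta_i(\mathsf b),\mathsf b]$ in the componentwise order consists only of vectors differing from $\mathsf b$ in position $i$, and the maximality in the definition of $\eta_i$ rules out intermediate values). For exhaustion you are right that the cleanest route is through the lattice-path cover description and the Ceballos--Padrol--Sarmiento isomorphism; this is exactly how \cite{DefantMeeting} proceeds. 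One can also argue purely on bracket vectors by taking any $\mathsf c<\mathsf b$, choosing the leftmost position $i$ where $\mathsf c_i<\mathsf b_i$, checking that $i\in\Delta(\mathsf b)$, and showing $\mathsf c\leq\mathsf b\leftarrow_i\eta_i(\mathsf b)$, but the path picture is more transparent.

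Two small remarks. First, your notation $\mathsf b^{(i)}$ collides with the paper's later use of $\mathsf b^{(\alpha)}=(\Theta_\alpha\circ\cdots\circ\Theta_0)(\mathsf b)$; if you intend to merge this into the paper, rename. Second, you should note the boundary case $\Delta(\mathsf b)=\emptyset$: your cover lemma then forces $\mathsf b$ to be the minimum $\mathbf b(\nu)$, and the convention $\Pop(\widehat 0)=\widehat 0$ agrees with $(\eta_0(\mathsf b),\ldots,\eta_\ell(\mathsf b))=\mathsf b$ since every $\eta_i$ defaults to $\mathsf b_i$ off $\Delta(\mathsf b)$.
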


The following theorem provides a description of rowmotion on $\Tam(\nu)$; in order to state it, we define maps $\Theta_r\colon\Tam(\nu)\to\Tam(\nu)$ according to the following rules: 
\begin{enumerate}
\item If $r\in\{f_0,\ldots,f_n\}$, then $\Theta_r(\mathsf b)=\mathsf b$ for all $\mathsf b\in\Tam(\nu)$.
\item If $r\in\Delta(\mathsf b)$, then $\Theta_r(\mathsf b)=\mathsf b\leftarrow_r\eta_r(\mathsf b)$.
\item If $r\not\in\{f_0,\ldots,f_n\}$ and $r\not\in\Delta(\mathsf{b})$, then $\Theta_r(\mathsf b)=\mathsf b\leftarrow_r\zeta_r(\mathsf b)$.
\end{enumerate}
It is straightforward to check that $\Theta_r$ does in fact send $\nu$-bracket vectors to $\nu$-bracket vectors. Indeed, $(2)$ says that if $\mathsf b_r$ is followed by a descent, then it is decreased by at least $1$, and the amount by which it is decreased is as small as possible for $\Theta_r(\mathsf b)$ to be a $\nu$-bracket vector. On the other hand, $(3)$ says that if $\mathsf b_r$ is not followed by a descent (and $r$ is not a fixed position), then it is increased as much as possible for $\Theta_r(\mathsf b)$ to be a $\nu$-bracket vector. 

\begin{theorem}\label{thm:rowmotionbracket}
For $\mathsf b\in\Tam(\nu)$, we have \[\row(\mathsf b)=(\Theta_\ell\circ\cdots\circ\Theta_1\circ\Theta_0)(\mathsf b).\]
\end{theorem}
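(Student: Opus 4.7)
The plan is to invoke Proposition~\ref{prop:pop1}, which characterizes $\row(\mathsf b)$ as the unique maximal element of $S_{\mathsf b}:=\{\mathsf z\in\Tam(\nu):\mathsf z\wedge\mathsf b=\Pop(\mathsf b)\}$, together with Proposition~\ref{prop:pop2}, which gives $\Pop(\mathsf b)_i=\eta_i(\mathsf b)$. Write $\mathsf c:=(\Theta_\ell\circ\cdots\circ\Theta_0)(\mathsf b)$, and let $\mathsf c^{(0)}:=\mathsf b$ and $\mathsf c^{(r+1)}:=\Theta_r(\mathsf c^{(r)})$. The foundational observation is that $\Theta_r$ alters only entry $r$, so $\mathsf c^{(r)}_j=\mathsf b_j$ for $j\geq r$ and $\mathsf c^{(r)}_j=\mathsf c_j$ for $j<r$. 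In particular, the three cases in the definition of $\Theta_r$ depend only on $\mathsf b_r$ and $\mathsf b_{r+1}$, so they coincide with the ``type'' of position $r$ relative to $\mathsf b$, and $\eta_r(\mathsf c^{(r)})=\eta_r(\mathsf b)$. The proof then reduces to showing that $\mathsf c$ is a $\nu$-bracket vector lying in $S_{\mathsf b}$ and that $\mathsf c$ is the maximum of $S_{\mathsf b}$.

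First I would check by induction on $r$ that each $\mathsf c^{(r+1)}$ is a $\nu$-bracket vector: case~(2) is immediate from the very definition of $\eta_r(\mathsf b)$, while in case~(3), condition~\eqref{Item3} applied to $\mathsf c^{(r)}$ ensures that $\zeta_r(\mathsf c^{(r)})$ is the largest value $t$ for which replacing $\mathsf c^{(r)}_r$ by $t$ preserves conditions~\eqref{Item2} and~\eqref{Item3}. Next, to verify $\mathsf c\in S_{\mathsf b}$, I would compute $\min(\mathsf c_i,\mathsf b_i)$ entry by entry: at a fixed position $\mathsf c_i=\mathsf b_i=\eta_i(\mathsf b)$; at $i\in\Delta(\mathsf b)$, $\mathsf c_i=\eta_i(\mathsf b)<\mathsf b_i$, so the minimum is $\eta_i(\mathsf b)$; and at other positions $\mathsf c_i=\zeta_i(\mathsf c^{(i)})\geq\mathsf b_i=\eta_i(\mathsf b)$, so the minimum is $\mathsf b_i$. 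In each case the componentwise minimum agrees with $\Pop(\mathsf b)_i$.

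The main step, and the expected main obstacle, is the maximality assertion. Given $\mathsf d\in S_{\mathsf b}$, I would prove $\mathsf d_i\leq\mathsf c_i$ by strong induction on $i$. When $i$ is a fixed or descent position, the equation $\mathsf d\wedge\mathsf b=\Pop(\mathsf b)$ forces $\mathsf d_i=\mathsf c_i$ outright. When $i$ is of the third type, suppose toward contradiction that $\mathsf d_i=k>v:=\zeta_i(\mathsf c^{(i)})$. Applying condition~\eqref{Item3} to $\mathsf d$ yields the dichotomy that every $j<i$ satisfies either $\mathsf d_j<b_i(\nu)$ or $\mathsf d_j\geq k$. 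Let $p$ be the \emph{earliest} position $<i$ with $\mathsf c_p=v$. Since $k\leq n$ forces $v<n$, if $p$ were of the third type, then the defining formula $\mathsf c_p=\zeta_p(\mathsf c^{(p)})$ would produce an earlier position $p'<p$ with $\mathsf c_{p'}=v$, contradicting the minimality of $p$; hence $p$ is fixed or a descent, and in either case $\mathsf d_p=v$ is forced by $\mathsf d\wedge\mathsf b=\Pop(\mathsf b)$. But $v\in[b_i(\nu),k-1]$, so $\mathsf d_p=v$ simultaneously satisfies $\mathsf d_p\geq b_i(\nu)$ and $\mathsf d_p<k$, violating the dichotomy. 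The delicate part of the argument is the choice of the witness position $p$ and the structural observation --- leaning on the self-referential definition of $\zeta_p$ and implicitly on Remark~\ref{rem:decreasing} --- that $p$ must not be of the third type.
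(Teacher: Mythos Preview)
Your argument is correct. Both your proof and the paper's rest on Propositions~\ref{prop:pop1} and~\ref{prop:pop2} and handle the three position types (fixed, descent, other) separately; the real content in each is the third case. But the two proofs organize that case dually. The paper starts from $\widehat{\mathsf b}=\row(\mathsf b)$, supplied by the abstract semidistributive theory, and shows position by position that $\widehat{\mathsf b}_\alpha=\zeta_\alpha(\mathsf b^{(\alpha-1)})$; for the inequality $\widehat{\mathsf b}_\alpha\geq\zeta_\alpha$ it manufactures a candidate $\mathsf b'=\widehat{\mathsf b}\leftarrow_\alpha\zeta_\alpha$ and carries out a small case analysis to verify that $\mathsf b'$ is a $\nu$-bracket vector, contradicting the maximality in Proposition~\ref{prop:pop1}. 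Your route instead takes an arbitrary $\mathsf d\in S_{\mathsf b}$ and locates an \emph{earliest} witness $p<i$ with $\mathsf c_p=v$; the minimality of $p$ (together with the self-referential formula $\mathsf c_p=\zeta_p(\mathsf c^{(p)})$ when $p$ is of the third type) forces $p\in\Delta(\mathsf b)$, so the constraint $\mathsf d\wedge\mathsf b=\Pop(\mathsf b)$ pins $\mathsf d_p=v$, and then $121$-avoidance in $\mathsf d$ yields the contradiction. This is a clean substitute for the paper's $\mathsf b'$-construction and its attendant case analysis. Two minor remarks: your ``strong induction on $i$'' is never actually invoked in the third case (the argument is self-contained once $\mathsf d_p$ is computed directly), and the possibility that $p$ is a fixed position is in fact vacuous, since $v\geq\mathsf b_i\geq b_i(\nu)$ forces $f_v\geq i>p$; neither affects correctness.
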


The computation of rowmotion in \cref{thm:rowmotionbracket} is similar to the computation in \emph{slow motion} that Thomas and Williams introduced for trim lattices in \cite{ThomasWilliams}. However, our step-by-step procedure is slightly different because we have added additional steps that always ``do nothing'' (applying $\Theta_r$ when $r$ is a fixed position) and we have combined some of the steps from the description in \cite{ThomasWilliams} (thereby ``speeding up'' parts of the ``slow motion''). 

\begin{example}\label{ex:rowmotionexample}
Let $\nu=(\text{N}\text{E}^2)^3$ so that $\Tam(\nu)=\Tam_3(2)$. Let us compute the image of the $\nu$-bracket vector $\mathsf b=(0,2,2,1,2,2,2,3,3,3)$ under rowmotion using \cref{thm:rowmotionbracket}. To ease notation, let \[\mathsf b^{(\alpha)}=(\Theta_\alpha\circ\cdots\circ\Theta_0)(\mathsf b).\] First, $\mathsf b^{(0)}=\Theta_0(\mathsf b)=\mathsf b$ since $0=f_0$. Next, note that $1\not\in\Delta(\mathsf b^{(0)})$ and $\zeta_1(\mathsf b^{(0)})=n=3$. Thus, \[\mathsf b^{(1)}=\Theta_1(\mathsf b^{(0)})=\mathsf b^{(0)}\leftarrow_1\zeta_1(\mathsf b^{(0)})=\mathsf b^{(0)}\leftarrow_13=(0,3,2,1,2,2,2,3,3,3).\] Since $2\in\Delta(\mathsf b^{(1)})$ and $\eta_2(\mathsf b^{(1)})=1$, we have \[\mathsf b^{(2)}=\Theta_2(\mathsf b^{(1)})=\mathsf b^{(1)}\leftarrow_2\eta_2(\mathsf b^{(1)})=\mathsf b^{(1)}\leftarrow_21=(0,3,1,1,2,2,2,3,3,3).\] Now $3=f_1$, so $\mathsf b^{(3)}=\Theta_3(\mathsf b^{(2)})=\mathsf b^{(2)}$. Since $4\not\in\Delta(\mathsf b^{(3)})$ and $\zeta_4(\mathsf b^{(3)})=3$, we have \[\mathsf b^{(4)}=\Theta_4(\mathsf b^{(3)})=\mathsf b^{(3)}\leftarrow_4\zeta_4(\mathsf b^{(3)})=\mathsf b^{(3)}\leftarrow_43=(0,3,1,1,3,2,2,3,3,3).\] Since $5\not\in\Delta(\mathsf b^{(4)})$ and $\zeta_5(\mathsf b^{(4)})=3$, we have \[\mathsf b^{(5)}=\Theta_5(\mathsf b^{(4)})=\mathsf b^{(4)}\leftarrow_5\zeta_5(\mathsf b^{(4)})=\mathsf b^{(4)}\leftarrow_53=(0,3,1,1,3,3,2,3,3,3).\] Then $6=f_2$, so $\mathsf b^{(6)}=\Theta_6(\mathsf b^{(5)})=\mathsf b^{(5)}$. Since $7\not\in\Delta(\mathsf b^{(6)})$, we have \[\mathsf b^{(7)}=\mathsf b^{(6)}\leftarrow_7\zeta_7(\mathsf b^{(6)})=\mathsf b^{(6)}\leftarrow_7 3=\mathsf b^{(6)}=\mathsf b^{(5)}.\] Since $8\not\in\Delta(\mathsf b^{(7)})$, we have \[\mathsf b^{(8)}=\mathsf b^{(7)}\leftarrow_8\zeta_8(\mathsf b^{(7)})=\mathsf b^{(7)}\leftarrow_8 3=\mathsf b^{(7)}=\mathsf b^{(5)}.\] Finally, $9=f_3$, so \[\row(\mathsf b)=\mathsf b^{(9)}=\Theta_9(\mathsf b^{(8)})=\mathsf b^{(8)}=(0,3,1,1,3,3,2,3,3,3).\]
This fact is illustrated by the pink arrow from $(0,2,2,1,2,2,2,3,3,3)$ to $(0,3,1,1,3,3,2,3,3,3)$ in \cref{FigRowTam2}.
\end{example}

\begin{figure}[ht]
  \begin{center}{\includegraphics[width=\linewidth]{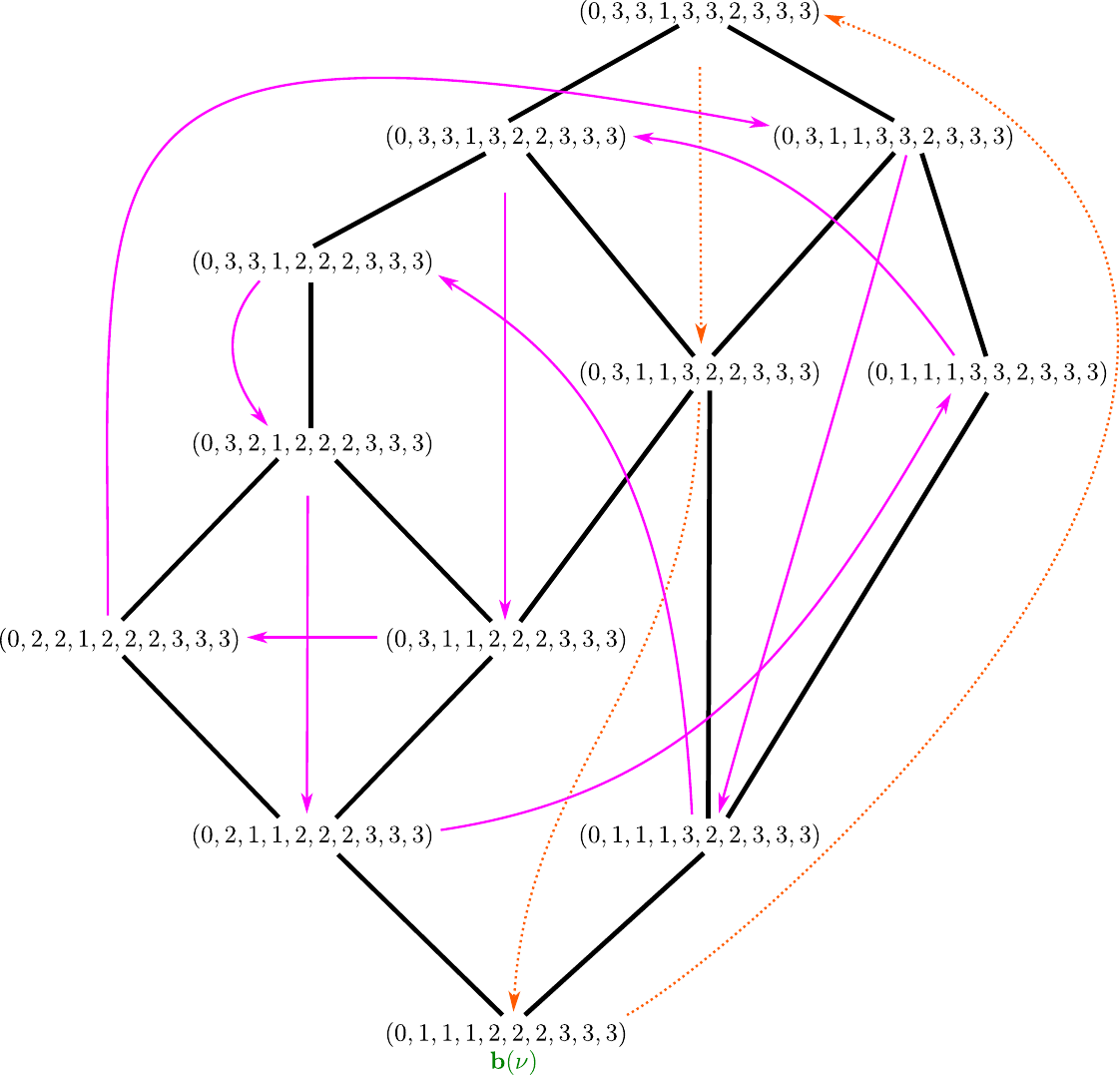}}
  \end{center}
  \caption{Rowmotion on $\Tam_3(2)$. The solid pink and dotted orange arrows indicate the action of rowmotion. Notice that there is one orbit of size $9$ (solid pink) and one orbit of size $3$ (dotted orange).}\label{FigRowTam2}
\end{figure}

\begin{proof}[Proof of \cref{thm:rowmotionbracket}]
Fix $\mathsf{b}\in\Tam(\nu)$, and let $\row(\mathsf{b})=\widehat{\mathsf b}=(\widehat{\mathsf b}_0,\ldots,\widehat{\mathsf b}_\ell)$. For each $\alpha\in[0,\ell]$, let $\mathsf b^{(\alpha)}=(\Theta_{\alpha}\circ\cdots\circ\Theta_0)(\mathsf b)$; let us also write ${\mathsf b}^{(-1)}=\mathsf b$. We claim that ${\mathsf b}^{(\alpha)}=(\widehat{\mathsf b}_0,\ldots,\widehat{\mathsf b}_{\alpha},\mathsf b_{\alpha+1},\ldots,\mathsf b_\ell)$ for all $\alpha\in[-1,\ell]$; once we prove this claim, setting $\alpha=\ell$ will yield the desired result. 

We will prove the desired claim by induction on $\alpha$; it is certainly true when $\alpha=-1$, so we may assume that $\alpha\in[0,\ell]$ and that ${\mathsf b}^{(\alpha-1)}=(\widehat{\mathsf b}_0,\ldots,\widehat{\mathsf b}_{\alpha-1},\mathsf b_{\alpha},\ldots,\mathsf b_\ell)$. Since $\Theta_\alpha$ can only change the entry in position $\alpha$ of a $\nu$-bracket vector, we just need to show that the entry in position $\alpha$ of the vector $\Theta_\alpha(\mathsf{b}^{(\alpha-1)})={\mathsf b}^{(\alpha)}$ is $\widehat b_{\alpha}$. If $\alpha=f_k$ for some $k\in[0,n]$, then every $\nu$-bracket vector has the entry $k$ in position $\alpha$, so ${\mathsf b}^{(\alpha)}_\alpha=\widehat b_{\alpha}=k$, as desired. 

Now suppose $\alpha\in\Delta(\mathsf b)$. In this case, the definition of $\Theta_\alpha$ ensures that ${\mathsf b}^{(\alpha)}_\alpha=\eta_\alpha(\mathsf b^{(\alpha-1)})$. Since $\mathsf b^{(\alpha-1)}$ agrees with $\mathsf b$ in positions $\alpha,\alpha+1,\ldots,\ell$, we have $\eta_\alpha(\mathsf b^{(\alpha-1)})=\eta_\alpha(\mathsf b)$.
Note that $\eta_\alpha(\mathsf b)<\mathsf b_\alpha$. \cref{prop:pop2} tells us that $\eta_\alpha(\mathsf b)$ is the entry in position $\alpha$ of $\Pop(\mathsf b)$, and \cref{prop:pop1} tells us that $\Pop(\mathsf b)=\row(\mathsf b)\wedge\mathsf b=\widehat{\mathsf b}\wedge\mathsf b$. Since the meet of two $\nu$-bracket vectors is their componentwise minimum, this shows that $\widehat{\mathsf b}_\alpha=\eta_\alpha(\mathsf b)={\mathsf b}^{(\alpha)}_\alpha$.   

Finally, assume that $\alpha\not\in\{f_0,\ldots,f_n\}$ and $\alpha\not\in\Delta(\mathsf b)$. In this case, the definition of $\Theta_\alpha$ ensures that ${\mathsf b}^{(\alpha)}_\alpha=\zeta_\alpha(\mathsf b^{(\alpha-1)})$ is the minimum element of $\{\widehat {\mathsf b}_j:j\in[0,\alpha-1]\text{ and }\widehat {\mathsf{b}}_j\geq\mathsf b_\alpha\}$ (or is $n$ if this set is empty). Let $k=\zeta_\alpha(\mathsf b^{(\alpha-1)})$; we want to prove that $\widehat {\mathsf b}_\alpha=k$. 

Suppose by way of contradiction that $\widehat {\mathsf b}_\alpha>k$. This implies that $k\neq n$, so there exists $j\in[0,\alpha-1]$ such that $\widehat{\mathsf b}_j=k\geq\mathsf b_\alpha$. Then $f_k\geq f_{\mathsf b_\alpha}>\alpha$ (the last inequality is strict because $\alpha\not\in\{f_0,\ldots,f_n\}$). Since $\widehat{\mathsf b}_\alpha>k$, the entries $\widehat{\mathsf b}_j,\widehat{\mathsf b}_\alpha,\widehat{\mathsf b}_{f_k}$ form a $121$-pattern in $\widehat{\mathsf b}$, contradicting \cref{rem:121}. This shows that $\widehat{\mathsf b}_\alpha\leq k$.  

Now suppose by way of contradiction that $\widehat {\mathsf b}_\alpha<k$. Let $\mathsf b'=\widehat{\mathsf b}\leftarrow_\alpha k$. We claim that $\mathsf b'$ is a $\nu$-bracket vector. It is straightforward to check that $\mathsf b'$ satisfies the first two conditions in \cref{DefMeet2}, so (by \cref{rem:121}) we just need to check that $\mathsf b'$ does not contain a $121$-pattern. Assume instead that $\mathsf b'$ contains a $121$-pattern in positions $i_1<i_2<i_3$. Let $k'=\mathsf b'_{i_3}$, and note that $i_3\leq f_{k'}$ and $\mathsf b'_{f_{k'}}=k'$. Therefore, $\mathsf b'$ contains a $121$-pattern in positions $i_1<i_2<f_{k'}$. Since $\mathsf b'$ agrees with the $\nu$-bracket vector $\widehat{\mathsf b}$ except in position $\alpha$, one of $i_1,i_2,f_{k'}$ must be $\alpha$ (otherwise, $\widehat{\mathsf b}$ would contain a $121$-pattern). We have assumed that $\alpha\not\in\{f_0,\ldots,f_n\}$, so either $\alpha=i_1$ or $\alpha=i_2$. If $\alpha=i_1$, then $k=\mathsf b'_\alpha<\mathsf b'_{i_2}\leq n$, so (by the definition of $k$) there exists $j\in[0,\alpha-1]$ such that $\widehat{\mathsf b}_j=k=\widehat{\mathsf b}_{\alpha}$. In this case, $\widehat{\mathsf b}$ has a $121$-pattern in positions $j<i_2<f_{k'}$, which is a contradiction. On the other hand, if $\alpha=i_2$, then the $\nu$-bracket vector \[\mathsf b^{(\alpha)}=\Theta_\alpha(\mathsf b^{(\alpha-1)})=\mathsf b^{(\alpha-1)}\leftarrow_\alpha k\] agrees with $\mathsf b'$ in positions $i_1$, $i_2$, and $f_{k'}$. However, this means that $\mathsf b^{(\alpha)}$ contains a $121$-pattern in positions $i_1<i_2<f_{k'}$, which is impossible. This proves our claim that $\mathsf b'$ is a $\nu$-bracket vector. Now, $\min\{\mathsf b'_\alpha,\mathsf b_\alpha\}=\min\{k,\mathsf b_\alpha\}=\mathsf b_\alpha=\eta_\alpha(\mathsf b)$, where the last equality comes from the fact that $\alpha\not\in\Delta(\mathsf b)$. For $i\in[0,\ell]\setminus\{\alpha\}$, we can use \cref{prop:pop1,prop:pop2} to see that \[\min\{\mathsf b'_i,\mathsf b_i\}=\min\{\widehat{\mathsf b}_i,\mathsf b_i\}=(\Pop(\mathsf b))_i=\eta_i(\mathsf b).\] This proves that $\mathsf b'\wedge\mathsf b=(\eta_0(\mathsf b),\ldots,\eta_\ell(\mathsf b))=\Pop(\mathsf b)$ (using \cref{prop:pop2} for the last equality), which contradicts \cref{prop:pop1} because $\mathsf b'>\widehat{\mathsf b}$.  
\end{proof}

The following corollary is immediate from the proof of \cref{thm:rowmotionbracket}. 

\begin{corollary}\label{cor:delta}
If $\mathsf b\in\Tam(\nu)$ and $i\in\Delta(\mathsf b)$, then $(\row(\mathsf b))_i=\eta_i(\mathsf b)<\mathsf b_i$. 
\end{corollary}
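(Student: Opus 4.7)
The plan is to extract the corollary directly from the descent case of the induction that was just carried out in the proof of Theorem~\ref{thm:rowmotionbracket}. That proof established, by induction on $\alpha$, the formula
\[\mathsf b^{(\alpha)}=(\widehat{\mathsf b}_0,\ldots,\widehat{\mathsf b}_\alpha,\mathsf b_{\alpha+1},\ldots,\mathsf b_\ell),\]
and split the inductive step into three cases according as $\alpha$ is a fixed position, a descent position of $\mathsf b$, or neither. Specializing to $\alpha=i$ with $i\in\Delta(\mathsf b)$, exactly one of those three cases applies, and it gives everything we want.

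First, I would apply the inductive claim at $\alpha=i$ to conclude that $\widehat{\mathsf b}_i=(\mathsf b^{(i)})_i$. Since $i\in\Delta(\mathsf b)$, the definition of $\Theta_i$ forces $(\mathsf b^{(i)})_i=\eta_i(\mathsf b^{(i-1)})$. Then I would note that $\mathsf b^{(i-1)}$ agrees with $\mathsf b$ in positions $i,i+1,\ldots,\ell$, which are exactly the positions that the definition of $\eta_i$ inspects; hence $\eta_i(\mathsf b^{(i-1)})=\eta_i(\mathsf b)$. This observation already appeared inside the proof of Theorem~\ref{thm:rowmotionbracket}, so no new work is required here. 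Chaining the three equalities gives $\widehat{\mathsf b}_i=\eta_i(\mathsf b)$, which is the equality half of the corollary.

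For the strict inequality $\eta_i(\mathsf b)<\mathsf b_i$, I would simply unpack the definition: $\eta_i(\mathsf b)$ is a maximum taken over a set of integers $h$ lying in the interval $[b_i(\nu),\mathsf b_i-1]$, so whenever the maximum exists we automatically have $\eta_i(\mathsf b)\leq \mathsf b_i-1$. To see that the set is nonempty (so that the maximum is well defined), I would check that $h=\mathsf b_{i+1}$ always qualifies: the bound $h<\mathsf b_i$ is the definition of $i\in\Delta(\mathsf b)$, the bound $h\geq b_i(\nu)$ follows from Definition~\ref{DefMeet2}\eqref{Item2} together with $b_{i+1}(\nu)\geq b_i(\nu)$, and the pattern-avoidance condition \eqref{Item3} from Definition~\ref{DefMeet2} forces $\mathsf b_j\leq h$ for every $j\in[i+1,f_h]$.

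The only real ``obstacle'' is bookkeeping: making sure that the identification $\eta_i(\mathsf b^{(i-1)})=\eta_i(\mathsf b)$ is made explicitly, and that the nonemptiness of the set in the definition of $\eta_i(\mathsf b)$ is verified. Neither step requires any new ideas beyond what is already assembled in the proof of Theorem~\ref{thm:rowmotionbracket}, which justifies the author's claim that the corollary is immediate.
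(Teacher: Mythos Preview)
Your proposal is correct and follows essentially the same route as the paper, which declares the corollary ``immediate from the proof of Theorem~\ref{thm:rowmotionbracket}'' and whose descent case already records $\widehat{\mathsf b}_\alpha=\eta_\alpha(\mathsf b)<\mathsf b_\alpha$ exactly as you extract it. One cosmetic point: when you invoke $\Theta_i$, the rule technically tests whether $i\in\Delta(\mathsf b^{(i-1)})$, not $\Delta(\mathsf b)$; but since you immediately note that $\mathsf b^{(i-1)}$ agrees with $\mathsf b$ in positions $i$ and $i+1$, this is equivalent and your argument goes through unchanged.
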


Before we proceed, let us record a simple lemma that will be useful later. 

\begin{lemma}\label{lem:etazthing}
Suppose $\mathsf b\in\Tam(\nu)$ and $i\in\Delta(\mathsf b)$, and let $z=\eta_i(\mathsf b)$. We have $\mathsf b_{f_{z}+1}=\mathsf b_i$. 
\end{lemma}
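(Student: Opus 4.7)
The plan is to argue by contradiction: assume $\mathsf b_{f_z+1} < \mathsf b_i$, set $h' := \mathsf b_{f_z+1}$, and show that $h'$ then satisfies the conditions defining $\eta_i(\mathsf b)$, forcing $\eta_i(\mathsf b) \geq h' > z$ and contradicting $\eta_i(\mathsf b) = z$.

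First I would establish two preliminary facts. Since $z \geq b_i(\nu)$, we have $f_z \geq f_{b_i(\nu)} \geq i$; if $f_z = i$, then $\mathsf b_i = \mathsf b_{f_z} = z$, contradicting $z \leq \mathsf b_i - 1$. Hence $f_z > i$, which means the range $[i+1, f_z]$ appearing in the definition of $\eta_i$ is nonempty and $f_z + 1 \leq f_{z+1}$. Next, applying Remark~\ref{rem:decreasing} with $k = z$ yields $\mathsf b_{f_z+1} \geq \mathsf b_{f_z+2} \geq \cdots \geq \mathsf b_{f_{z+1}} = z+1$, so in particular $h' = \mathsf b_{f_z+1} \geq z+1$.

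For the main argument, assuming $h' < \mathsf b_i$, I need to verify that $h'$ belongs to the set over which $\eta_i(\mathsf b)$ is maximized. Since $h' \geq z+1 > z \geq b_i(\nu)$ and $h' \leq \mathsf b_i - 1$ by hypothesis, $h' \in [b_i(\nu), \mathsf b_i - 1]$. It remains to check $\mathsf b_j \leq h'$ for all $j \in [i+1, f_{h'}]$, and I would split this range at $f_z$: for $j \in [i+1, f_z]$, the defining condition of $z = \eta_i(\mathsf b)$ gives $\mathsf b_j \leq z < h'$; for $j \in [f_z+1, f_{h'}]$, the point is that $\mathsf b_{f_z+1} = h'$, so condition~\eqref{Item3} of Definition~\ref{DefMeet2} applied at position $f_z+1$ (equivalently, the 121-avoidance of Remark~\ref{rem:121}) yields $\mathsf b_j \leq h'$ throughout $[f_z+2, f_{h'}]$, while trivially $\mathsf b_{f_z+1} = h'$. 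Combining the two subranges gives $\mathsf b_j \leq h'$ for all $j \in [i+1, f_{h'}]$, so $h'$ lies in the set and $\eta_i(\mathsf b) \geq h' > z$, contradicting $\eta_i(\mathsf b) = z$.

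The main conceptual obstacle is the possibly counterintuitive fact that $f_z > i$ rather than $f_z \leq i$: one might naively guess that, since $i \in \Delta(\mathsf b)$ marks a descent just after $i$, the ``last position of value $z$'' should lie at or before $i$. Once one notices that $z \geq b_i(\nu)$ forces $f_z \geq i$, the splitting of $[i+1, f_{h'}]$ at $f_z$ and the reuse of condition~\eqref{Item3} at the new pivot $f_z + 1$ become natural, and no further delicate estimates are required.
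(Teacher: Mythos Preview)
Your proof is correct and follows essentially the same approach as the paper's: both arguments set $h=\mathsf b_{f_z+1}$, observe $h\geq z+1$, combine the inequality $\mathsf b_j\leq z$ on $[i+1,f_z]$ from the definition of $\eta_i$ with condition~\eqref{Item3} applied at position $f_z+1$ to cover $[f_z+2,f_h]$, and conclude from the maximality of $\eta_i(\mathsf b)$ that $h\geq\mathsf b_i$. Your version is slightly more explicit in verifying $f_z>i$ (which the paper leaves implicit), but the logical content is the same.
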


\begin{proof}
Let $k=f_z+1$ and $h=\mathsf b_k$. Note that $h\geq b_k(\nu)=z+1$. The definition of $\eta_i(\mathsf b)$ tells us that $\mathsf b_j\leq z$ for all $j\in[i+1,f_z]$. We have $\mathsf b_{f_z+1}=h$, and condition~\eqref{Item3} in \cref{DefMeet2} implies that $\mathsf b_j\leq h$ for all $j\in[f_z+2,f_h]$. Thus, $\mathsf b_j\leq h$ for all $j\in[i+1,f_{h}]$. It follows from the maximality in the definition of $\eta_i(\mathsf b)$ that $h$ cannot satisfy $z<h<\mathsf b_i$. We know that $z<h$, so $h\geq\mathsf b_i$. If we had $h>\mathsf b_i$, then the entries of $\mathsf b$ in positions $i,k,f_{b_i}$ would form a $121$ pattern, contradicting \cref{rem:121}. 
\end{proof}

\section{$m$-Tamari Lattices}\label{Sec:mTamari}

In this section, we fix positive integers $m$ and $n$ and focus our attention on the $m$-Tamari lattice $\Tam_n(m)$, which is the lattice $\Tam(\nu)$ for $\nu=(\text{N}\text{E}^m)^n$. It is a classical result \cite{Dvoretzky} that the cardinality of $\Tam_n(m)$ is the \dfn{Fuss--Catalan number} \[\Cat^{(m)}_n=\frac{1}{(m+1)n+1}\binom{(m+1)n+1}{n}.\] Let ${[k]_q!=[k]_q[k-1]_q\cdots[1]_q}$, where $[k]_q=\frac{1-q^k}{1-q}=1+q+\cdots+q^{k-1}$ is the $q$-analogue of the integer $k$. The \dfn{$q$-binomial coefficient} ${k\brack r}_q$ is defined to be $\dfrac{[k]_q!}{[r]_q![k-r]_q!}$. There is a natural $q$-analogue of the Fuss--Catalan number $\Cat_n^{(m)}$ given by \[\Cat^{(m)}_n(q)=\frac{1}{[(m+1)n+1]_q}{(m+1)n+1\brack n}_q.\] We are going to completely describe the orbit structure of $\row\colon\Tam_n(m)\to\Tam_n(m)$ by showing that the triple $(\Tam_n(m),\row,\Cat_n^{(m)}(q))$ exhibits the cyclic sieving phenomenon (see \cref{subsec:CSP}). 

Consider a partition $\rho$ of a totally ordered set $X$. We say two distinct blocks $B_1$ and $B_2$ in $\rho$ form a \dfn{crossing} if there exist $i_1,j_1\in B_1$ and $i_2,j_2\in B_2$ such that either $i_1<i_2<j_1<j_2$ or $i_1>i_2>j_1>j_2$. We say $\rho$ is \dfn{noncrossing} if no two distinct blocks in $\rho$ form a crossing. Let $\NC(k)$ be the set of noncrossing partitions of $[k]$. 

We are going to make use of the \dfn{Kreweras complementation} map $\Krew\colon\NC(k)\to\NC(k)$. Given a noncrossing partition $\rho$ of the set $[k]$, let $K'(\rho)$ be the coarsest partition of the set $\{1',\ldots,k'\}$ such that $\rho\cup K'(\rho)$ is a noncrossing partition of the totally ordered set $\{1<1'<2<2'<\cdots<k<k'\}$. Then let $\Krew(\rho)$ be the noncrossing partition of $[k]$ obtained from $K'(\rho)$ by removing the primes from the elements of $\{1',\ldots,k'\}$. For example, if \[\rho=\{\{1,2,3\},\{4,8,12\},\{5,6,7\},\{9,10,11\}\}\in\NC(12)\] is as shown in pink in \cref{FigRowTam6}, then $K'(\rho)$ is the partition shown in orange in \cref{FigRowTam6}, so \[\Krew(\rho)=\{\{1\},\{2\},\{3,12\},\{4,7\},\{5\},\{6\},\{8,11\},\{9\},\{10\}\}\in\NC(12).\] Kreweras complementation is a bijection. 

\begin{figure}[ht]
  \begin{center}{\includegraphics[height=6.5cm]{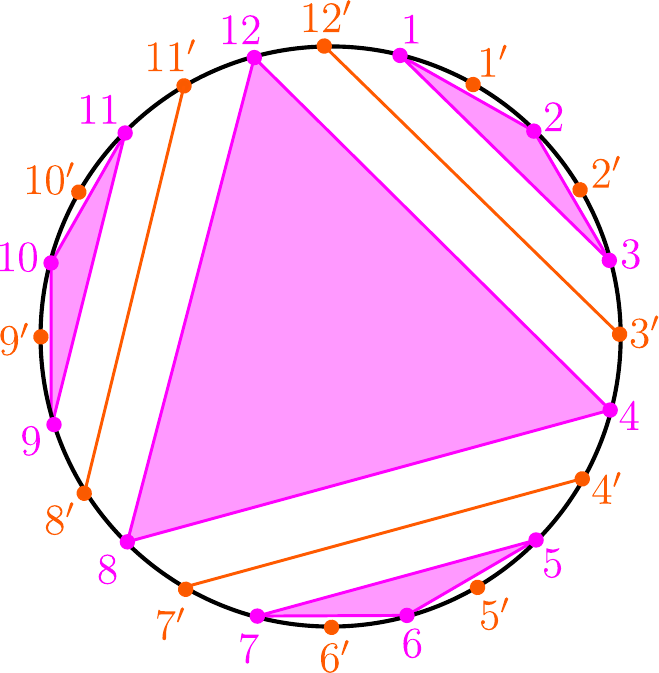}}
  \end{center}
  \caption{A noncrossing partition $\rho\in\NC(12)$ (pink) together with $K'(\rho)$ (orange).}\label{FigRowTam6}
\end{figure}

Following \cite{BodnarRhoades}  we say a partition $\rho\in\NC((m+1)n)$ is \dfn{$(m+1)$-homogeneous} if each of its blocks has size $m+1$ (such partitions are sometimes called \dfn{$(m+1)$-equal}). Let $\HNC^{(m)}(n)$ denote the set of $(m+1)$-homogeneous noncrossing partitions of $[(m+1)n]$. Following \cite{Armstrong}, we say $\rho\in\NC((m+1)n)$ is \dfn{$(m+1)$-shuffle} if it has the property that $i\equiv j\pmod{m+1}$ whenever $i$ and $j$ are in the same block of $\rho$. Let $\ShNC^{(m)}(n)$ denote the set of $(m+1)$-shuffle noncrossing partitions of $[(m+1)n]$ that have exactly $mn+1$ blocks. One can readily check (see also \cite[Section~4.3]{Armstrong}) that \[\Krew(\HNC^{(m)}(n))=\ShNC^{(m)}(n)\quad\text{and}\quad\Krew(\ShNC^{(m)}(n))=\HNC^{(m)}(n).\] For example, the partition $\rho$ shown in pink in \cref{FigRowTam6} is $3$-homogeneous, while its Kreweras complement $\Krew(\rho)$ is $3$-shuffle. 

There is a natural \dfn{rotation} operator $\Rot\colon\NC((m+1)n)\to\NC((m+1)n)$ that acts by cyclically rotating the blocks of a partition one space to the left. More precisely, for $\rho\in\NC((m+1)n)$, two integers $i$ and $j$ lie in the same block of $\rho$ if and only if $i-1$ and $j-1$ lie in the same block of $\Rot(\rho)$, where the integers are taken modulo $(m+1)n$. It is well known that $\Rot=\Krew^2$. Notice that $\Rot$ preserves each of the sets $\HNC^{(m)}(n)$ and $\ShNC^{(m)}(n)$. 

Bodnar and Rhoades \cite{BodnarRhoades} proved that the triple $(\HNC^{(m)}(n),\Rot,\Cat_n^{(m)}(q))$ exhibits the cyclic sieving phenomenon. Since $\Krew$ is a bijection from $\HNC^{(m)}(n)$ to $\ShNC^{(m)}(n)$ satisfying \[\Rot\circ\Krew=\Krew\circ\Rot,\] it follows that \begin{equation}\label{Eq9}
(\ShNC^{(m)}(n),\Rot,\Cat_n^{(m)}(q))\text{ exhibits the cyclic sieving phenomenon.}
\end{equation}
Therefore, in order to prove the main result of this section, it suffices to construct a bijection $\Psi\colon\Tam_n(m)\to\ShNC^{(m)}(n)$ satisfying $\Psi\circ\row=\Rot\circ\Psi$.

Now let $\mathsf b\in\Tam_n(m)$ be a $\nu$-bracket vector, where
$\nu=(\text{N}\text{E}^m)^n$. The fixed positions $f_0,\ldots,f_n$ of $\nu$ are given by $f_k=(m+1)k$, so $\mathsf b_{(m+1)k}=k$ for all $k\in[0,n]$. Furthermore, $b_{(m+1)s+t}(\nu)=s+1$ for all $s\in[0,n-1]$ and $t\in[m+1]$. Recall from \cref{sec:describing} that $\Delta(\mathsf b)$ is the set of indices $i\in[0,(m+1)n-1]$ such that $\mathsf b_i>\mathsf b_{i+1}$; it follows from \cref{DefMeet2} that none of the fixed positions $(m+1)k$ belong to $\Delta(\mathsf b)$. Let us say an index $i\in[(m+1)n]$ is \dfn{$k$-initial} in $\mathsf b$ if $\mathsf b_i=k$ and $\mathsf b_{j}\neq k$ for all $j\in[0,i-1]$. 

If $\rho$ is a partition of the set $[(m+1)n]$ and $i\in[(m+1)n]$ is an integer that is not maximal in its block in $\rho$, we define the \dfn{successor} of $i$ in $\rho$ to be the smallest integer that lies in the same block as $i$ and is greater than $i$. We are going to define the set partition $\Psi(\mathsf b)$ by specifying successors of various integers. To begin, we define a function $\varphi_{\mathsf b}\colon[(m+1)n]\to[(m+1)n]\cup\{\emptyset\}$. (To motivate this definition, the reader may wish to look ahead at \cref{lem:Psi1} to see how $\varphi_{\mathsf{b}}$ will be used in defining $\Psi(\mathsf b)$.) Suppose $i\in[(m+1)n]$; there are three cases to consider. 

\medskip

\noindent {\bf Case 1.} If $i<(m+1)n$ is a multiple of $m+1$ and $i+1$ is $k$-initial in $\mathsf b$ for some $k$, then we define $\varphi_{\mathsf b}(i)=f_k=(m+1)k$. 

\medskip

\noindent {\bf Case 2.} Suppose $i\in\Delta(\mathsf b)$ (in particular, $i\not\equiv 0\pmod{m+1}$). In this case, we define $\varphi_{\mathsf b}(i)$ to be the smallest integer $j\in[(m+1)n]$ satisfying $j>i$, $j\equiv i\pmod{m+1}$, and $\mathsf b_i=\mathsf b_{j}$. To see that such an integer exists, let us write $i=(m+1)s+t$, where $0\leq s\leq n-1$ and $1\leq t\leq m$. If we let $k=\mathsf b_{(m+1)s+t}$, then (by the definition of $\Delta(\mathsf b)$) we have $k>\mathsf b_{(m+1)s+t+1}\geq b_{(m+1)s+t+1}(\nu)=s+1$. Then $(m+1)s+t<(m+1)(k-1)+t<(m+1)k=f_k$, so it follows from condition~\eqref{Item3} in \cref{DefMeet2} that $\mathsf b_{(m+1)(k-1)+t}\leq k$. We also have $\mathsf b_{(m+1)(k-1)+t}\geq b_{(m+1)(k-1)+t}(\nu)=k$, so $\mathsf b_{(m+1)(k-1)+t}=k$. This shows that $(m+1)(k-1)+t$ satisfies the conditions defining $j$ (though it might not be the smallest integer with these properties).

\medskip

\noindent {\bf Case 3.} If we are not in Case~1 or Case~2, then we simply define $\varphi_{\mathsf b}(i)=\emptyset$. 

\medskip

\begin{example}\label{Exam1}
Suppose $m=2$, $n=4$, and $\mathsf b=(0,4,1,1,3,3,2,3,3,3,4,4,4)$. Then 
$\varphi_{\mathsf b}(1)=10$, $\varphi_{\mathsf b}(3)=9$, $\varphi_{\mathsf b}(5)=8$, and $\varphi_{\mathsf b}(i)=\emptyset$ for all $i\in[12]\setminus\{1,3,5\}$. 
\end{example}

\begin{lemma}\label{lem:injective}
The map $\varphi_{\mathsf b}\colon[(m+1)n]\to[(m+1)n]\cup\{\emptyset\}$ is injective on $[(m+1)n]\setminus\varphi_{\mathsf b}^{-1}(\emptyset)$. 
\end{lemma}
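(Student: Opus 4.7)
The plan is to partition the preimages $\varphi_{\mathsf b}^{-1}([(m+1)n])$ into the indices falling under Case~1 and those falling under Case~2 of the definition, and then rule out collisions within each case as well as between cases.

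First I would separate Case~1 from Case~2 by a simple congruence observation. In Case~1, the output $\varphi_{\mathsf b}(i) = (m+1)k$ is always a multiple of $m+1$. In Case~2, by definition $\varphi_{\mathsf b}(i) \equiv i \pmod{m+1}$, and since $i \in \Delta(\mathsf b)$, the paper has already noted that $i$ cannot be a fixed position, so $i \not\equiv 0 \pmod{m+1}$ and therefore $\varphi_{\mathsf b}(i) \not\equiv 0 \pmod{m+1}$. Hence a Case~1 output can never equal a Case~2 output.

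Next I would handle two Case~1 indices $i_1, i_2$ with common image $(m+1)k$. By definition of Case~1, both $i_1+1$ and $i_2+1$ are $k$-initial in $\mathsf b$. But $k$-initial means the smallest index at which $\mathsf b$ takes the value $k$, which is obviously unique (or nonexistent); hence $i_1+1 = i_2+1$ and $i_1 = i_2$.

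Finally I would handle two Case~2 indices $i_1 < i_2$ with common image $j$. From $\varphi_{\mathsf b}(i_1) = \varphi_{\mathsf b}(i_2) = j$ we get $i_1 \equiv j \equiv i_2 \pmod{m+1}$ and $\mathsf b_{i_1} = \mathsf b_j = \mathsf b_{i_2}$. Thus $i_2$ itself satisfies $i_2 > i_1$, $i_2 \equiv i_1 \pmod{m+1}$, and $\mathsf b_{i_2} = \mathsf b_{i_1}$, so by the minimality in the definition of $\varphi_{\mathsf b}(i_1)$ we would have $\varphi_{\mathsf b}(i_1) \leq i_2 < j$, contradicting $\varphi_{\mathsf b}(i_1) = j$. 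So $i_1 = i_2$, completing the verification.

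This argument is essentially bookkeeping off the definition, so I don't anticipate any serious obstacle; the only mildly delicate point is keeping track of which indices satisfy Case~1 versus Case~2, which is cleanly handled by the congruence modulo $m+1$ above.
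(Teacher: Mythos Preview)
Your proof is correct and follows essentially the same approach as the paper: separate Case~1 from Case~2 via the congruence $i\equiv\varphi_{\mathsf b}(i)\pmod{m+1}$, then use uniqueness of the $k$-initial index for Case~1 and the minimality clause for Case~2. The paper packages the first step slightly differently (observing directly that $i\equiv\varphi_{\mathsf b}(i)\equiv i'$ forces both indices into the same case), but the substance is identical.
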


\begin{proof}
Suppose that $i,i'\in[(m+1)n]\setminus\varphi_{\mathsf b}^{-1}(\emptyset)$ are such that $\varphi_{\mathsf b}(i)=\varphi_{\mathsf b}(i')$. Without loss of generality, assume $i\leq i'$. It follows from the definition of $\varphi_{\mathsf b}$ that $i\equiv \varphi_{\mathsf b}(i)\equiv i'\pmod{m+1}$. If $i\equiv i'\equiv 0\pmod{m+1}$ (i.e., $i$ and $i'$ both fall into Case~1 in the definition of $\varphi_{\mathsf b}$), then $\varphi_{\mathsf b}(i)=\varphi_{\mathsf b}(i')=(m+1)k$ for some $k$. In this case, $i+1$ and $i'+1$ are both $k$-initial in $\mathsf b$, so we must have $i=i'$. If $i\equiv i'\not\equiv 0\pmod{m+1}$ (i.e., $i$ and $i'$ both fall into Case~2 in the definition of $\varphi_{\mathsf b}$), then $\mathsf b_{i}=\mathsf b_{i'}=\mathsf b_{\varphi_{\mathsf b}(i)}$ and $i\leq i'<\varphi_{\mathsf b}(i)$, so the minimality in the definition of $\varphi_{\mathsf b}(i)$ forces us to have $i=i'$. 
\end{proof}

\begin{lemma}\label{lem:Psi1}
Let $\mathsf b\in\Tam_n(m)$. There is a unique set partition $\Psi(\mathsf b)$ of $[(m+1)n]$ such that the following conditions hold for all $i\in[(m+1)n]$:
\begin{itemize}
    \item We have $\varphi_{\mathsf b}(i)=\emptyset$ if and only if $i$ is the largest element in its block in $\Psi(\mathsf b)$.
    \item If $\varphi_{\mathsf b}(i)\neq\emptyset$, then $\varphi_{\mathsf b}(i)$ is the successor of $i$ in $\Psi(\mathsf b)$.
\end{itemize}
\end{lemma}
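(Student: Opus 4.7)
The plan is to extract a set partition from $\varphi_{\mathsf b}$ by viewing it as a partial ``successor function'' and building the blocks as its orbits. Define a directed graph $G_{\mathsf b}$ on vertex set $[(m+1)n]$ with an edge $i\to \varphi_{\mathsf b}(i)$ whenever $\varphi_{\mathsf b}(i)\neq\emptyset$. By construction, every vertex has out-degree at most $1$, and Lemma~\ref{lem:injective} (injectivity of $\varphi_{\mathsf b}$ on its support) gives that every vertex has in-degree at most $1$.

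Next I would verify the obvious but crucial monotonicity $\varphi_{\mathsf b}(i)>i$ for every $i$ with $\varphi_{\mathsf b}(i)\neq\emptyset$. In Case~2 this is built into the definition. In Case~1 we have $i=(m+1)s$ with $0\le s\le n-1$ and $\varphi_{\mathsf b}(i)=(m+1)k$ where $\mathsf b_{i+1}=k$; the inequality $k\geq b_{i+1}(\nu)=s+1$ then yields $\varphi_{\mathsf b}(i)\geq(m+1)(s+1)>i$. Monotonicity forces $G_{\mathsf b}$ to be acyclic, so, combined with the in- and out-degree bounds, the weak connected components of $G_{\mathsf b}$ are directed paths. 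Because edges increase vertex labels, each such path is of the form $i_1\to i_2\to\cdots\to i_r$ with $i_1<i_2<\cdots<i_r$, and its unique sink $i_r$ is precisely the vertex with $\varphi_{\mathsf b}(i_r)=\emptyset$.

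For existence, define $\Psi(\mathsf b)$ to be the partition of $[(m+1)n]$ whose blocks are the vertex sets of these paths. Then a vertex is the maximum of its block if and only if it is the sink of its path, which is equivalent to $\varphi_{\mathsf b}(i)=\emptyset$, giving the first bullet. Since within a path the vertices are listed in increasing order along the edges, the immediate successor of $i_j$ in its block equals $i_{j+1}=\varphi_{\mathsf b}(i_j)$, giving the second bullet.

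For uniqueness, suppose $\Psi'$ is any partition satisfying both conditions. The second bullet forces $i$ and $\varphi_{\mathsf b}(i)$ to lie in the same block of $\Psi'$ whenever $\varphi_{\mathsf b}(i)\neq\emptyset$, so the equivalence relation on $[(m+1)n]$ generated by $i\sim\varphi_{\mathsf b}(i)$ refines $\Psi'$. But this equivalence relation has as its classes exactly the vertex sets of the paths in $G_{\mathsf b}$; the first bullet then forces each class of $\Psi'$ to have a single maximum (the sink of the corresponding path), which prevents merging distinct paths. Hence $\Psi'=\Psi(\mathsf b)$. I expect no serious obstacle beyond being careful that $\varphi_{\mathsf b}(i)$ genuinely gives the immediate successor of $i$ inside its block, which is automatic once the components are known to be increasing paths.
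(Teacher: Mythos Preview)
Your proof is correct and follows essentially the same approach as the paper: both build $\Psi(\mathsf b)$ as the partition whose blocks are the connected components of the relation $i\sim\varphi_{\mathsf b}(i)$, use the monotonicity $\varphi_{\mathsf b}(i)>i$ to see that each block is an increasing chain, and read off the two bullets from this chain structure. Your version is phrased in directed-graph language and is slightly more explicit---you invoke Lemma~\ref{lem:injective} to bound in-degrees and you spell out both the Case~1 monotonicity check and the uniqueness argument, whereas the paper simply takes the finest partition and asserts that each block equals the forward orbit of its minimum (which tacitly uses injectivity) and declares uniqueness immediate.
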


\begin{proof}
Let $\Psi(\mathsf b)$ be the finest partition of $[(m+1)n]$ such that $i$ and $\varphi_{\mathsf b}(i)$ are in the same block for every $i\in[(m+1)n]\setminus\varphi_{\mathsf b}^{-1}(\emptyset)$. Choose a block $B$ of $\Psi(\mathsf b)$, and let $p=\min B$. Let $h$ be the smallest positive integer such that $\varphi_{\mathsf b}^h(p)=\emptyset$. The definition of $\varphi_{\mathsf b}$ guarantees that $\varphi_{\mathsf b}(j)>j$ for all $j\in[(m+1)n]\setminus\varphi_{\mathsf b}^{-1}(\emptyset)$. It follows that $B=\{p<\varphi_{\mathsf b}(p)<\varphi_{\mathsf b}^2(p)<\cdots<\varphi_{\mathsf b}^{h-1}(p)\}$. This proves that the largest element of $B$ is in $\varphi_{\mathsf b}^{-1}(\emptyset)$ and that $\varphi_{\mathsf b}(i)$ is the successor of $i$ for every non-maximal element $i$ of $B$. We have proven the existence of the partition $\Psi(\mathsf b)$ with the desired properties; the uniqueness is immediate.
\end{proof}

The preceding lemma provides us with a map $\Psi$ that sends $\nu$-bracket vectors in $\Tam_n(m)$ to set partitions of $[(m+1)n]$. 

\begin{example}
Let $m=2$, $n=4$, and $\mathsf b=(0,4,1,1,3,3,2,3,3,3,4,4,4)$; we computed the function $\varphi_{\mathsf b}$ in \cref{Exam1}. The set partition described in \cref{lem:Psi1} is \[\Psi(\mathsf b)=\{\{1,10\},\{2\},\{3,9\},\{4\},\{5,8\},\{6\},\{7\},\{11\},\{12\}\}. \qedhere\]
\end{example}

\begin{lemma}\label{lem:mn+1blocks}
If $\mathsf b\in\Tam_n(m)$, then $\Psi(\mathsf b)$ has exactly $mn+1$ blocks.
\end{lemma}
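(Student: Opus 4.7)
The plan is to compute the number of blocks of $\Psi(\mathsf b)$ directly from the functional graph of $\varphi_{\mathsf b}$. By Lemma~\ref{lem:injective}, $\varphi_{\mathsf b}$ is injective on its nonempty fibers, and in each of Cases~1 and~2 one checks $\varphi_{\mathsf b}(i) > i$. So the directed graph on $[(m+1)n]$ with an edge $i \to \varphi_{\mathsf b}(i)$ for each $i$ with $\varphi_{\mathsf b}(i) \neq \emptyset$ is a disjoint union of paths, and these paths are precisely the blocks of $\Psi(\mathsf b)$. Hence the number of blocks equals $(m+1)n - E$, where $E$ counts those $i$ with $\varphi_{\mathsf b}(i) \neq \emptyset$. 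Case~1 supplies exactly one edge for each $s \in [1, n-1]$ such that $v_s := \mathsf b_{(m+1)s+1}$ does not appear in positions $0, 1, \ldots, (m+1)s$; let $T$ denote this set of $s$. Case~2 supplies exactly $|\Delta(\mathsf b)|$ edges. The lemma therefore reduces to showing $|T| + |\Delta(\mathsf b)| = n - 1$.

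To prove this identity, I will classify each value $k \in \{1, \ldots, n\}$ by the position $p_k$ at which it first occurs in $\mathsf b$. This position always lies in a unique segment $[(m+1)s+1, (m+1)(s+1)]$, and falls into exactly one of three types: (A) $p_k = (m+1)s+1$ is the opening position of its segment, in which case $k = v_s$; (B) $p_k$ is strictly interior to its segment; or (C) $p_k = (m+1)(s+1)$ is the fixed position $f_{s+1}$, in which case $k = s+1$. Let $n_A$, $n_B$, $n_C$ be the corresponding counts, so that $n_A + n_B + n_C = n$. Type~A values are in bijection with segments $s \in [0, n-1]$ for which $v_s$ is new, and because $v_0 \geq 1 > \mathsf b_0 = 0$ the segment $s = 0$ always qualifies; thus $n_A = |T| + 1$.

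The main step is to establish $n_B + n_C = |\Delta(\mathsf b)|$ via a bijection. If $k$ is of type~B or~C and first appears at $p$, then $p-1$ and $p$ lie in the same segment, so Remark~\ref{rem:decreasing} gives $\mathsf b_{p-1} \geq \mathsf b_p = k$; strict inequality must hold, because otherwise $k$ would already appear at position $p-1$. Thus $p - 1 \in \Delta(\mathsf b)$. Conversely, given $d \in \Delta(\mathsf b)$, set $k := \mathsf b_{d+1}$. If $k$ had an earlier occurrence at some position $p' < d$, then $\mathsf b_{p'} = \mathsf b_{d+1} = k < \mathsf b_d$ would exhibit a $121$-pattern at positions $p' < d < d+1$, which is forbidden by Remark~\ref{rem:121}; hence $d+1$ is the first appearance of $k$. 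This first appearance cannot be of type~A, because then $d = (m+1)s$ would be a fixed position with $\mathsf b_d = s < s+1 \leq v_s = \mathsf b_{d+1}$, contradicting $d \in \Delta(\mathsf b)$. So $k$ is of type~B or~C, and the two constructions $k \mapsto p_k - 1$ and $d \mapsto \mathsf b_{d+1}$ are mutually inverse. Combining $n_A = |T| + 1$ with $n_B + n_C = |\Delta(\mathsf b)|$ gives $|T| + |\Delta(\mathsf b)| = n - 1$, so $\Psi(\mathsf b)$ has $(m+1)n - (n-1) = mn + 1$ blocks.

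The most delicate point will be the surjectivity of the descent-to-first-appearance map: ensuring that every descent of $\mathsf b$ truly exposes a brand-new value is precisely what the $121$-avoidance in Remark~\ref{rem:121} buys us, and none of the numerical conditions in Definition~\ref{DefMeet2} appear to substitute for it cleanly. Once this observation is in hand, the rest of the counting is routine bookkeeping.
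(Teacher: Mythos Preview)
Your proof is correct and follows essentially the same approach as the paper's: both arguments establish that the indices $i$ with $\varphi_{\mathsf b}(i)\neq\emptyset$ are precisely the positions $p_k-1$ for the $n-1$ values $k\in[n]$ whose first appearance $p_k$ is not at position $1$. The paper identifies $\varphi_{\mathsf b}^{-1}(\emptyset)$ explicitly, while you count the edges by splitting into Case~1 and Case~2 and matching them to your Types~A and~B/C, but the underlying bijection (and the use of $121$-avoidance to show each descent exposes a new value) is the same.
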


\begin{proof}
According to the description of $\Psi(\mathsf b)$ in \cref{lem:Psi1}, our goal is to show that \[\lvert\varphi_{\mathsf b}^{-1}(\emptyset)\rvert=mn+1.\] For each $k\in[n]$, let $g_k$ be the smallest integer such that $\mathsf b_{g_k}=k$ (i.e., $g_k$ is the unique integer that is $k$-initial in $\mathsf b$). Let $a\in[n]$ be the integer such that $g_{a}=1$. We will prove that \[\varphi_{\mathsf b}^{-1}(\emptyset)=[(m+1)n]\setminus\{g_k-1:k\in[n]\setminus \{a\}\}.\] To this end, we first observe that $(m+1)n\in\varphi_{\mathsf b}^{-1}(\emptyset)$.

Fix $k\in[n]$, and let $r$ be the number of occurrences of $k$ in $\mathsf b$. Let $i_1<\cdots<i_r$ be the indices such that $\mathsf b_{i_1}=\cdots=\mathsf b_{i_r}=k$; thus, $i_1=g_k$. Consider $t\in[2,r]$. If $i_t\equiv 1\pmod{m+1}$, then $\varphi_{\mathsf b}(i_t-1)=\emptyset$ because $i_t$ is not $k$-initial in $\mathsf b$. If $i_t\not\equiv 1\pmod{m+1}$, then the observation that $\mathsf b_{i_{t-1}}=\mathsf b_{i_t}$ implies (via \cref{rem:121}) that $\mathsf b_{i_t-1}\leq\mathsf b_{i_t}$, so $i_t-1\not\in\Delta(\mathsf b)$. In either case, $i_t-1\in\varphi_{\mathsf b}^{-1}(\emptyset)$. We now want to show that $i_1-1\not\in\varphi_{\mathsf b}^{-1}(\emptyset)$. This is obvious if $k=a$ (i.e., if $i_1-1=0$), so we may assume $k\neq a$. If $i_1\equiv 1\pmod{m+1}$, then $\varphi_{\mathsf b}(i_1-1)=f_k$ because $i_1$ is $k$-initial in $\mathsf b$, so $i_1-1\not\in\varphi_{\mathsf b}^{-1}(\emptyset)$. If $i_1\not\equiv 1\pmod{m+1}$, then we can use \cref{rem:decreasing} to see that $\mathsf b_{i_1-1}\geq\mathsf b_{i_1}$. In this case, we must actually have $\mathsf b_{i_1-1}>\mathsf b_{i_1}$ since $i_1$ is $k$-initial in $\mathsf b$, so $i_1-1\in\Delta(\mathsf b)$. This shows that $i_1-1\not\in\varphi_{\mathsf b}^{-1}(\emptyset)$ in this case as well.   
\end{proof}

\begin{proposition}\label{prop:ShNC}
If $\mathsf b\in\Tam_n(m)$, then $\Psi(\mathsf b)\in\ShNC^{(m)}(n)$.
\end{proposition}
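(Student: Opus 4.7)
The desired statement breaks into three pieces: $\Psi(\mathsf b)$ (a) partitions $[(m+1)n]$ into exactly $mn+1$ blocks, (b) is $(m+1)$-shuffle, and (c) is noncrossing. Part (a) is Lemma~\ref{lem:mn+1blocks}. Part (b) is immediate from the definition of $\varphi_{\mathsf b}$: in both Cases~1 and~2, $\varphi_{\mathsf b}(i)\equiv i\pmod{m+1}$ whenever $\varphi_{\mathsf b}(i)\neq\emptyset$, so by Lemma~\ref{lem:Psi1} every block of $\Psi(\mathsf b)$ lies in a single residue class modulo $m+1$.

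The bulk of the work is (c), which I prove by contradiction. Assume there exist arcs $(i_1,j_1)$ and $(i_2,j_2)$, meaning $\varphi_{\mathsf b}(i_\alpha)=j_\alpha$, with $i_1<i_2<j_1<j_2$. The plan is to split on which of the Cases in the definition of $\varphi_{\mathsf b}$ applies to each of $i_1,i_2$. Whenever at least one of them is a multiple of $m+1$, a forbidden $121$-pattern in $\mathsf b$ arises directly from Remark~\ref{rem:121} or condition~\eqref{Item3} of Definition~\ref{DefMeet2}, combined with the facts that in Case~1 one has $j_\alpha=f_{k_\alpha}$ and $\mathsf b_{i_\alpha+1}$ is the first occurrence in $\mathsf b$ of some value $k_\alpha$. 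For example, if $i_1,i_2\equiv 0\pmod{m+1}$ with $\mathsf b_{i_\alpha+1}=k_\alpha$ initial, then $k_1<k_2$ while positions $i_1+1<i_2+1<f_{k_1}$ carry values $k_1,k_2,k_1$, a $121$-pattern.

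The substantial subcase, and the main obstacle of the proof, is when both $i_\alpha\in\Delta(\mathsf b)$ and $i_\alpha\not\equiv 0\pmod{m+1}$. Writing $k_\alpha=\mathsf b_{i_\alpha}=\mathsf b_{j_\alpha}$, two symmetric applications of Remark~\ref{rem:121} to the triples $(i_1,i_2,j_1)$ and $(i_2,j_1,j_2)$ yield $k_1=k_2=:k$. Because $i_\alpha\in\Delta(\mathsf b)$ has value $k$, Remark~\ref{rem:decreasing} forces $i_\alpha$ to be the last position of value $k$ in its block; write $i_\alpha=(m+1)(r_\alpha-1)+\tau_\alpha$ with $r_1<r_2$ and $\tau_\alpha\in[1,m]$, and write $j_1=(m+1)(r_j-1)+\tau_1$. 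The goal is to produce a $k$-position strictly between $i_2$ and $j_1$ of residue $\tau_2$ modulo $m+1$, contradicting the minimality defining $j_2=\varphi_{\mathsf b}(i_2)$. The key structural claim is: for every block $r$ strictly after the block $r^*$ containing the first $k$-position of $\mathsf b$, if block $r$ contains any $k$-position then the $k$-positions of block $r$ form a prefix of that block. This follows from condition~\eqref{Item3} applied at the first $k$-position (which caps all subsequent entries by $k$) combined with condition~\eqref{Item2} and Remark~\ref{rem:decreasing} (which force the leading entry of such a block to equal $k$).

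Granted this claim, the finish is quick. The weakly decreasing property inside block $r_2$ rules out $r_j=r_2$, so $r_j>r_2>r_1\geq r^*$, and both blocks $r_2$ and $r_j$ begin with $k$. The minimality defining $j_1=\varphi_{\mathsf b}(i_1)$ then forces $\tau_2<\tau_1$, since otherwise $(m+1)(r_2-1)+\tau_1$ would be a smaller $k$-position of residue $\tau_1$ lying past $i_1$. But then $(m+1)(r_j-1)+\tau_2$ is a $k$-position of residue $\tau_2$ that exceeds $i_2$ yet is strictly less than $j_1<j_2$, contradicting the minimality of $j_2$.
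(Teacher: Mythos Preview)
Your proof is correct and follows essentially the same strategy as the paper's: handle the shuffle and block-count conditions via Lemma~\ref{lem:mn+1blocks} and the definition of $\varphi_{\mathsf b}$, then prove noncrossing by contradiction, splitting on whether $i_1,i_2$ are multiples of $m+1$ and, in the main case, pinning down a $k$-position that violates the minimality in the definition of $\varphi_{\mathsf b}$. Your ``prefix'' structural claim is a clean repackaging of the inequality-chasing the paper carries out via Remarks~\ref{rem:121} and~\ref{rem:decreasing}, and your treatment of the cases with some $i_\alpha\equiv 0\pmod{m+1}$, while terser than the paper's (which works through all three subcases explicitly), correctly identifies them as direct $121$-pattern contradictions.
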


\begin{proof}
The definition of $\varphi_{\mathsf b}$ ensures that $i\equiv\varphi_{\mathsf b}(i)\pmod{m+1}$ for all $i\in[(m+1)n]\setminus\varphi_{\mathsf b}^{-1}(\emptyset)$. Hence, it follows from the description of $\Psi(\mathsf b)$ given in \cref{lem:Psi1} that any two numbers in the same block of $\Psi(\mathsf b)$ must be congruent modulo $m+1$. We also know by \cref{lem:mn+1blocks} that $\Psi(\mathsf b)$ has exactly $mn+1$ blocks. We are left to prove that $\Psi(\mathsf b)$ is noncrossing. 

Suppose by way of contradiction that $\Psi(\mathsf b)$ is not noncrossing. This implies that there are indices $i_1<i_2<j_1<j_2$ such that $j_1$ is the successor of $i_1$ and $j_2$ is the successor of $i_2$. Then $j_1=\varphi_{\mathsf b}(i_1)$ and $j_2=\varphi_{\mathsf b}(i_2)$ by the description of $\Psi(\mathsf b)$ in \cref{lem:Psi1}. 

Let us first consider the case in which neither $i_1$ nor $i_2$ is divisible by $m+1$. This implies that $i_1,i_2\in\Delta(\mathsf b)$ and that $\mathsf b_{i_1}=\mathsf b_{j_1}$ and $\mathsf b_{i_2}=\mathsf b_{j_2}$. If we had $\mathsf b_{i_2}>\mathsf b_{i_1}$, then $\mathsf b_{i_1},\mathsf b_{i_2},\mathsf b_{j_1}$ would form a $121$-pattern in $\mathsf b$, contradicting \cref{rem:121}. If we had $\mathsf b_{i_2}<\mathsf b_{i_1}$, then $\mathsf b_{i_2},\mathsf b_{j_1},\mathsf b_{j_2}$ would form a $121$-pattern in $\mathsf b$, again contradicting \cref{rem:121}. Therefore, it must actually be the case that $\mathsf b_{i_1}=\mathsf b_{i_2}=\mathsf b_{j_1}=\mathsf b_{j_2}$. Let us write $i_1=(m+1)s_1+t_1$, $i_2=(m+1)s_2+t_2$, $j_1=(m+1)s_1'+t_1$, and $j_2=(m+1)s_2'+t_2$, where $0\leq s_1\leq s_2\leq s_1'\leq s_2'\leq n-1$ and $t_1,t_2\in[m]$. Because $i_1\in\Delta(\mathsf b)$, we have $\mathsf b_{(m+1)s_1+t_1}>\mathsf b_{(m+1)s_1+t_1+1}\geq\mathsf b_{(m+1)s_1+r}$ for all $r\in[t_1+1,m]$ by \cref{rem:decreasing}. This implies that $s_1<s_2$. Similarly, the fact that $i_2\in\Delta(\mathsf b)$ implies that $s_2<s_1'$. If $t_1\leq t_2$, then we can use \cref{rem:121} to see that $\mathsf b_{(m+1)s_2+t_1}\leq \mathsf b_{(m+1)s_2+t_2}$ (since $\mathsf b_{(m+1)s_1+t_1},\mathsf b_{(m+1)s_2+t_1},\mathsf b_{(m+1)s_2+t_2}$ cannot form a $121$-pattern), and we can use \cref{rem:decreasing} to see that $\mathsf b_{(m+1)s_2+t_1}\geq \mathsf b_{(m+1)s_2+t_2}$. However, this means that $\mathsf b_{(m+1)s_2+t_1}=\mathsf b_{i_1}$, which contradicts the minimality in the definition of $\varphi_{\mathsf b}(i_1)$ because $(m+1)s_2+t_1<j_1=\varphi_{\mathsf b}(i_1)$. On the other hand, if $t_2<t_1$, then we can use \cref{rem:121} to see that $\mathsf b_{(m+1)s_1'+t_2}\leq \mathsf b_{(m+1)s_1'+t_1}$ (since $\mathsf b_{(m+1)s_2+t_2},\mathsf b_{(m+1)s_1'+t_2},\mathsf b_{(m+1)s_1'+t_1}$ cannot form a $121$-pattern), and we can use \cref{rem:decreasing} to see that $\mathsf b_{(m+1)s_1'+t_2}\geq \mathsf b_{(m+1)s_1'+t_1}$. However, this means that $\mathsf b_{(m+1)s_1'+t_2}=\mathsf b_{i_2}$, which contradicts the minimality in the definition of $\varphi_{\mathsf b}(i_2)$ because $(m+1)s_1'+t_2<j_2=\varphi_{\mathsf b}(i_2)$. This yields the desired contradiction when neither $i_1$ nor $i_2$ is divisible by $m+1$. 

We now consider the case where $i_1$ is divisible by $m+1$. We have $j_1=\varphi_{\mathsf b}(i_1)=f_k$, where $k=\mathsf b_{i_1+1}$. We know by \cref{DefMeet2} that $\mathsf b_{j_2}\geq b_{j_2}(\nu)>k$ because $j_2>f_k$. Condition~\eqref{Item3} in \cref{DefMeet2} tells us that $\mathsf b_{i_2}\leq k$ since $i_2\in[i_1+1,f_k]$. This shows that $\mathsf b_{i_2}\neq\mathsf b_{j_2}=\mathsf b_{\varphi_{\mathsf b}(i_2)}$, so it follows from the definition of $\varphi_{\mathsf b}$ that $i_2$ is divisible by $m+1$. Thus, $i_1\equiv i_2\equiv 0\pmod{m+1}$. We have $i_1+1<i_2+1<j_1$ and $\mathsf b_{i_1+1}=\mathsf b_{j_1}=k$, so \cref{rem:121} tells us that $\mathsf b_{i_2+1}\leq k$. We have $i_2+1<j_1<j_2$ and $\mathsf b_{i_2+1}=\mathsf b_{j_2}$, so \cref{rem:121} tells us that $k=\mathsf b_{j_1}\leq\mathsf b_{i_2+1}$. Therefore, $k=\mathsf b_{i_2+1}=\mathsf b_{j_2}$. However, this is impossible because $j_2>j_1=f_k$. 

Finally, consider the case where $i_2$ is divisible by $m+1$ and $i_1$ is not. Let $k'=\mathsf b_{i_2+1}$. Then $i_2+1$ is $k'$-initial in $\mathsf b$, and $k'=\mathsf b_{i_2+1}=\mathsf b_{\varphi_{\mathsf b}(i_2)}=\mathsf b_{j_2}$. Since $i_2+1\leq j_1<j_2$, it follows from \cref{rem:121} that $\mathsf b_{j_1}\leq\mathsf b_{i_2+1}=k'$. Because $i_1$ is not divisible by $m+1$, we must have $\mathsf b_{i_1}=\mathsf b_{\varphi_{\mathsf b}(i_1)}=\mathsf b_{j_1}$. Since $i_1<i_2+1\leq j_1$, \cref{rem:121} tells us that $k'=\mathsf b_{i_2+1}\leq\mathsf b_{j_1}$. This shows that $\mathsf b_{i_1}=\mathsf b_{j_1}=k'$, which is a contradiction because $i_2+1$ is $k'$-initial in $\mathsf b$.
\end{proof}

Our next goal is to show that the map $\Psi\colon\Tam_n(m)\to\ShNC^{(m)}(n)$ is a bijection. To do so, we require the following simple lemma about noncrossing partitions.

\begin{lemma}\label{lem:uniqueNC}
If $\rho=\{B_1,\ldots,B_r\}$ and $\rho'=\{B_1',\ldots,B_r'\}$ are noncrossing partitions of a totally ordered set $X$ such that \[\{\min B_1,\ldots,\min B_r\}=\{\min B_1',\ldots,\min B_r'\}\] 
 and \[\{\max B_1,\ldots,\max B_r\}=\{\max B_1',\ldots,\max B_r'\},\] then $\rho=\rho'$. 
\end{lemma}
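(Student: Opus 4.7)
The plan is to reconstruct any noncrossing partition $\rho$ directly from the data consisting of the set of block minima and the set of block maxima, via a stack-based procedure; since this data is assumed to coincide for $\rho$ and $\rho'$, the procedure will yield the same output for both, giving $\rho=\rho'$. From the given sets, I classify each $x\in X$ into one of four types: an \emph{opener} (a min but not a max), a \emph{closer} (a max but not a min), a \emph{singleton} (both), or a \emph{middle} element (neither). Now process the elements of $X$ in increasing order, maintaining a stack of ``blocks-in-progress''. On an opener $x$, push the new one-element block $\{x\}$. On a closer $x$, adjoin $x$ to the block on top of the stack and pop that block (sending it to the output). On a singleton $x$, push $\{x\}$ and immediately pop it. On a middle element $x$, adjoin $x$ to the block on top of the stack without popping.

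The heart of the argument is to verify that this procedure really reproduces $\rho$. The hard part will be showing that whenever we process a closer or a middle element $x$ whose true block in $\rho$ is some $B$, the block $B$ is in fact on top of the stack. Assuming inductively that the procedure has built $B$ correctly up to this point, $B$ lies somewhere on the stack (it was pushed when $\min B$ was processed and has not yet been popped because $x\leq\max B$). Suppose for contradiction that some block $C$ opened more recently sits above $B$. Then $\min B<\min C$, and since $C$ is still open we have $\min C\leq x<\max C$; disjointness of $B$ and $C$ upgrades the first inequality to $\min C<x$. Taking $i_1=\min B$, $j_1=x$, $i_2=\min C$, $j_2=\max C$ gives $i_1<i_2<j_1<j_2$ with $i_1,j_1\in B$ and $i_2,j_2\in C$, contradicting the noncrossing hypothesis on $\rho$. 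The opener and singleton cases require only that the stack operations are consistent, which is immediate.

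Once this correctness is established, the lemma follows at once: the procedure applied to the common minimum-and-maximum data reconstructs $\rho$ when run on $\rho$'s data and reconstructs $\rho'$ when run on $\rho'$'s data, but these input data are equal, so the outputs agree and $\rho=\rho'$.
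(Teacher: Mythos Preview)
Your proof is correct and takes a genuinely different route from the paper's. The paper argues by induction on the number of blocks $r$: it locates a block minimum $u$ and a block maximum $v$ with no other minima or maxima strictly between them, observes that the interval $[u,v]$ must then be a block of both $\rho$ and $\rho'$, deletes it, and applies the inductive hypothesis to the remaining $r-1$ blocks. Your stack-based reconstruction is instead explicitly algorithmic: you process $X$ in increasing order and show that the noncrossing hypothesis forces each non-opener element to belong to the most recently opened block, so the partition is uniquely determined by the opener/closer data. The paper's argument is shorter; yours has the virtue of yielding a concrete linear-time reconstruction procedure and of making transparent why noncrossing is exactly the condition that open blocks nest. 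One small point worth tightening: when you assert $x<\max C$ from ``$C$ is still open,'' openness alone only gives $\max C\geq x$ (since $\max C$ has not yet been processed); the strict inequality again requires disjointness of $B$ and $C$ (if $\max C=x$ then $x\in B\cap C$), exactly as in your upgrade of $\min C\leq x$ to $\min C<x$.
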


\begin{proof}
The statement is trivial if $r=1$, so we may assume $r\geq 2$ and proceed by induction on $r$. We may assume that $X=[k]$ for some positive integer $k$. There exist $u\in\{\min B_1,\ldots,\min B_r\}$ and $v\in\{\max B_1,\ldots,\max B_r\}$ such that the set $[u+1,v-1]$ is disjoint from both $\{\min B_1,\ldots,\min B_r\}$ and $\{\max B_1,\ldots,\max B_r\}$. We must have $[u,v]=B_i=B_j'$ for some $i,j\in[r]$. If we remove $B_i$ from $\rho$ and $B_j'$ from $\rho'$, we obtain noncrossing partitions of $[k]\setminus[u,v]$. We can apply the inductive hypothesis to find that these noncrossing partitions are equal, so $\rho=\rho'$.       
\end{proof}

\begin{proposition}\label{prop:Psibijection}
The map $\Psi\colon\Tam_n(m)\to\ShNC^{(m)}(n)$ constructed above is a bijection.     
\end{proposition}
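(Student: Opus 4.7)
My plan is to prove injectivity of $\Psi$ and then invoke a cardinality argument. Since $|\Tam_n(m)|=\Cat_n^{(m)}$ by the classical Fuss--Catalan enumeration \cite{Dvoretzky}, and $|\ShNC^{(m)}(n)|=\Cat_n^{(m)}$ by \eqref{Eq9} evaluated at $q=1$, any injection $\Psi\colon\Tam_n(m)\to\ShNC^{(m)}(n)$ is automatically a bijection. So the real work is injectivity.

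The first step is to notice that $\varphi_{\mathsf{b}}$ is recoverable from $\rho:=\Psi(\mathsf{b})$: by Lemma~\ref{lem:Psi1}, an index $i\in[(m+1)n]$ satisfies $\varphi_{\mathsf{b}}(i)=\emptyset$ precisely when $i$ is maximal in its block of $\rho$, and in all other cases $\varphi_{\mathsf{b}}(i)$ is the successor of $i$ in $\rho$. Consequently, $\Psi(\mathsf{b})=\Psi(\mathsf{b}')$ forces $\varphi_{\mathsf{b}}=\varphi_{\mathsf{b}'}$, and it suffices to prove that $\mathsf{b}$ is determined by $\varphi_{\mathsf{b}}$.

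The second step is a downward induction on $i\in[0,(m+1)n]$. At a fixed position $i=(m+1)k$, the entry $\mathsf{b}_i=k$ is forced. Otherwise write $i=(m+1)s+t$ with $t\in[m]$, and assume inductively that $\mathsf{b}_j$ is known for all $j>i$. Remark~\ref{rem:decreasing} gives the lower bound $\mathsf{b}_i\geq\mathsf{b}_{i+1}$. If $\varphi_{\mathsf{b}}(i)=j\neq\emptyset$, then Case~1 in the definition of $\varphi_{\mathsf{b}}$ is inapplicable (because $i\not\equiv 0\pmod{m+1}$), so Case~2 applies and $\mathsf{b}_i=\mathsf{b}_j$, which is already known. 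If instead $\varphi_{\mathsf{b}}(i)=\emptyset$, then Case~2 must fail, so $i\notin\Delta(\mathsf{b})$ and $\mathsf{b}_i\leq\mathsf{b}_{i+1}$; combining with the lower bound from Remark~\ref{rem:decreasing} forces $\mathsf{b}_i=\mathsf{b}_{i+1}$. In every case $\mathsf{b}_i$ is determined by previously computed entries, so the induction closes and $\mathsf{b}$ is recovered from $\varphi_{\mathsf{b}}$.

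The main subtlety I expect to address is the case $\varphi_{\mathsf{b}}(i)=\emptyset$ at a non-fixed position $i$: a priori the partition $\rho$ appears to discard any information about $\mathsf{b}_i$. What rescues the argument is Remark~\ref{rem:decreasing}, which upgrades the purely qualitative datum ``$i\notin\Delta(\mathsf{b})$'' (read off from $\rho$) to the quantitative equality $\mathsf{b}_i=\mathsf{b}_{i+1}$. Once this observation is in hand, everything else is bookkeeping, so I do not anticipate further obstacles.
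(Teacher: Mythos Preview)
Your proof is correct, and it takes a genuinely different route from the paper. Both proofs begin identically---reducing to injectivity via the Fuss--Catalan cardinality---but they diverge in how they recover $\mathsf b$ from $\Psi(\mathsf b)$. The paper introduces auxiliary noncrossing partitions $\rho,\rho'$ of $[(m+1)n]$ whose blocks are the level sets $\{i:\mathsf b_i=k\}$, argues (via the proof of Lemma~\ref{lem:mn+1blocks}) that the block minima and maxima of $\rho$ are determined by $\Psi(\mathsf b)$, and then invokes a separate structural lemma (Lemma~\ref{lem:uniqueNC}) asserting that a noncrossing partition is uniquely determined by its sets of block minima and block maxima. Your argument bypasses all of this: once you observe that $\varphi_{\mathsf b}$ is read off from $\Psi(\mathsf b)$, the downward induction reconstructs $\mathsf b$ entry-by-entry using only Remark~\ref{rem:decreasing} and the case split defining $\varphi_{\mathsf b}$. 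Your approach is more self-contained (no need for Lemma~\ref{lem:uniqueNC}) and exposes more directly that $\Psi$ is effectively invertible; the paper's approach has the mild advantage of exhibiting the level-set partition as an intermediate combinatorial object, which may be conceptually useful elsewhere.
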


\begin{proof}
We already know by \cite[Section~4.3]{Armstrong} and \cite{Dvoretzky} that $\lvert\Tam_n(m)\rvert=\lvert\ShNC^{(m)}(n)\rvert=\Cat^{(m)}_n$, so it suffices to prove that $\Psi$ is injective. Suppose $\mathsf b,\mathsf b'\in\Tam_n(m)$ are such that $\Psi(\mathsf b)=\Psi(\mathsf b')$. For each $k\in[n]$, let $B_k$ (respectively, $B_k'$) be the set of indices $i\in[(m+1)n]$ such that $\mathsf b_i=k$ (respectively, $\mathsf b_i'=k$). Let $\rho=\{B_1,\ldots,B_n\}$ and $\rho'=\{B_1',\ldots,B_n'\}$. It follows from \cref{rem:121} that $\rho$ and $\rho'$ are noncrossing partitions of $[(m+1)n]$. Moreover, $\max B_k=\max B_k'=(m+1)k$ for all $k\in[n]$, so $\{\max B_1,\ldots,\max B_n\}=\{\max B_1',\ldots,\max B_n'\}$. Observe that $1$ is in both $\{\min B_1,\ldots,\min B_n\}$ and $\{\min B_1',\ldots,\min B_n'\}$. Now suppose $t\in[2,(m+1)n]$. We saw in the proof of \cref{lem:mn+1blocks} that $t\in\{\min B_1,\ldots,\min B_n\}$ if and only if $t+1$ has a successor in $\Psi(\mathsf b)$ (equivalently, $\varphi_{\mathsf b}(t+1)\neq\emptyset$). Similarly, $t\in\{\min B_1',\ldots,\min B_n'\}$ if and only if $t+1$ has a successor in $\Psi(\mathsf b')$. Since $\Psi(\mathsf b)=\Psi(\mathsf b')$, this shows that $\{\min B_1,\ldots,\min B_n\}=\{\min B_1',\ldots,\min B_n'\}$. \cref{lem:uniqueNC} tells us that $\rho=\rho'$, from which it follows that $\mathsf b=\mathsf b'$. 
\end{proof}

In order to complete the proof of the main result of this section, we still need to show that ${\Psi\circ\row=\Rot\circ\Psi}$. Let $\ell=(m+1)n$, and recall the definition of the maps $\Theta_0,\ldots,\Theta_\ell$ from \cref{sec:describing}. In the following lemmas, we will find it convenient to write \[\mathsf b^{(\alpha)}=(\Theta_{\alpha}\circ\cdots\circ\Theta_0)(\mathsf b);\] observe that $\mathsf b^{(\ell)}=\row(\mathsf b)$ by \cref{thm:rowmotionbracket}. Also, each of the vectors $\mathsf b^{(\alpha)}$ is a $\nu$-bracket vector since each of the maps $\Theta_j$ sends $\nu$-bracket vectors to $\nu$-bracket vectors.

\begin{lemma}\label{lem:1modm+1}
Let $\mathsf b\in\Tam_n(m)$. Suppose $i,j\in[(m+1)n]$ are such that $i\equiv j\equiv 1\pmod{m+1}$ and $\varphi_{\mathsf b}(i)=j$. If $j=(m+1)u+1$, then $i$ is $u$-initial in $\row(\mathsf b)$. 
\end{lemma}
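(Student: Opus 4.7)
The plan is to establish both halves of ``$i$ is $u$-initial in $\row(\mathsf b)$'': the equation $(\row(\mathsf b))_i = u$, and the absence of $u$ at any earlier position in $\row(\mathsf b)$. Both halves will exploit Theorem~\ref{thm:rowmotionbracket} together with the fact that each $\Theta_\alpha$ modifies only coordinate $\alpha$; hence $(\row(\mathsf b))_\alpha$ is governed by which of the three cases in the definition of $\Theta_\alpha$ applies at $\alpha$. Since $i \equiv 1 \pmod{m+1}$ is not a fixed position, the hypothesis $\varphi_{\mathsf b}(i) = j$ places us in Case~2 of the definition of $\varphi_{\mathsf b}$: we obtain $i \in \Delta(\mathsf b)$, together with $k := \mathsf b_i = \mathsf b_j$ and the fact that $j$ is the smallest index strictly greater than $i$ satisfying $j \equiv 1 \pmod{m+1}$ and $\mathsf b_j = k$. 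Writing $i = (m+1)s+1$ and $j = (m+1)u+1$, I read off $b_i(\nu)=s+1$ and $b_j(\nu)=u+1 \leq k$, yielding the baseline inequalities $s<u<k$.

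For the first half, Corollary~\ref{cor:delta} reduces the claim to proving $\eta_i(\mathsf b)=u$. The interval condition $u \in [b_i(\nu), \mathsf b_i-1]=[s+1, k-1]$ is immediate. The crux is verifying that $\mathsf b_{j'} \leq u$ for every $j' \in [i+1,(m+1)u]$; I will proceed by contradiction, splitting on the size of $\mathsf b_{j'}$. If $\mathsf b_{j'}>k$, then $\mathsf b_i,\mathsf b_{j'},\mathsf b_j$ forms a $121$-pattern, violating Remark~\ref{rem:121}. If $u<\mathsf b_{j'}<k$, setting $k''=\mathsf b_{j'}$ and using $f_{k''} \geq (m+1)(u+1)>j$ places $j$ in $[j'+1, f_{k''}]$, so condition~\eqref{Item3} forces $k=\mathsf b_j \leq k''$, impossible. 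The delicate subcase is $\mathsf b_{j'}=k$: writing $j'=(m+1)s''+t''$, a fixed-position $j'$ would yield $j'=f_k>(m+1)u$, impossible; if $s''>s$, then $q:=(m+1)s''+1$ lies in $[i+1,f_k]$, and Remark~\ref{rem:decreasing} combined with condition~\eqref{Item3} at $i$ forces $\mathsf b_q=k$, exhibiting a smaller Case-2 candidate for $\varphi_{\mathsf b}(i)$ and contradicting minimality of $j$; if $s''=s$ (so $t'' \geq 2$), the same two ingredients collapse $\mathsf b_i, \mathsf b_{i+1}, \ldots, \mathsf b_{j'}$ to the common value $k$, violating $i \in \Delta(\mathsf b)$. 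Maximality of $u$ in the definition of $\eta_i(\mathsf b)$ is then easy: any $h \geq u+1$ places $j$ in $[i+1, f_h]$ and forces $k=\mathsf b_j \leq h \leq k-1$.

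For the second half, let $i^*$ be the smallest index in $[0,i-1]$ with $(\row(\mathsf b))_{i^*}=u$, assuming such exists, and I will analyze how $\Theta_{i^*}$ produced this value. If $i^*$ is a fixed position, then $i^*=(m+1)u \geq (m+1)(s+1)>i$, contradicting $i^*<i$. If $i^* \in \Delta(\mathsf b)$, then $u=\eta_{i^*}(\mathsf b)$ forces $\mathsf b_{j'} \leq u$ on $[i^*+1,f_u]$; since $i$ lies in this interval, $\mathsf b_i \leq u<k=\mathsf b_i$, impossible. Otherwise $(\row(\mathsf b))_{i^*}=\zeta_{i^*}(\mathsf b^{(i^*-1)})=u$; the upper bound $u \leq n-1$ (from $(m+1)u+1=j \leq (m+1)n$) excludes the empty-set convention $u=n$, so the defining minimum is realized by some $j'<i^*$ with $(\row(\mathsf b))_{j'}=u$, contradicting the minimality of $i^*$.

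The hard part will be the subcase $\mathsf b_{j'}=k$ in the proof that $\eta_i(\mathsf b)=u$; rather than an immediate $121$-violation, I must leverage the congruence structure modulo $m+1$ via Remark~\ref{rem:decreasing} and condition~\eqref{Item3} to extract a smaller Case-2 candidate for $\varphi_{\mathsf b}(i)$, contradicting its defining minimality. Everything else reduces directly to reading off the pointwise action of $\Theta_\alpha$ prescribed by Theorem~\ref{thm:rowmotionbracket}.
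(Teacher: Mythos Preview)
Your proof is correct. It follows the same two-part decomposition as the paper—first showing $(\row(\mathsf b))_i=u$ via $\eta_i(\mathsf b)=u$, then ruling out earlier occurrences of $u$—but the arguments you use inside each part differ from the paper's.

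For the equality $\eta_i(\mathsf b)=u$, the paper sets $z=\eta_i(\mathsf b)$ and argues indirectly: the defining condition of $\eta_i$ forces $j>f_z$, hence $u\geq z$; if $u>z$ then $f_z+1$ lies strictly between $i$ and $j$, so Remark~\ref{rem:121} gives $\mathsf b_{f_z+1}\leq\mathsf b_i$ while Lemma~\ref{lem:etazthing} gives $\mathsf b_{f_z+1}\geq\mathsf b_i$, producing an index $f_z+1\equiv 1\pmod{m+1}$ with value $k$ that contradicts the minimality of $j$. You instead verify directly that $u$ satisfies the defining condition for $\eta_i(\mathsf b)$ and is maximal, handling the delicate case $\mathsf b_{j'}=k$ by manufacturing a smaller Case-2 candidate via Remark~\ref{rem:decreasing}. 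Your route is self-contained (no appeal to Lemma~\ref{lem:etazthing}) but longer; the paper's is slicker because that lemma packages exactly the needed inequality.

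For the second half, the paper's argument is a one-liner: any $r<i$ with $\widehat{\mathsf b}_r=u$ immediately violates condition~\eqref{Item3} in the $\nu$-bracket vector $\mathsf b^{(i-1)}$, since $\mathsf b^{(i-1)}_i=\mathsf b_i=k>u$ and $i\in[r+1,f_u]$. Your minimal-counterexample argument splitting on how $\Theta_{i^*}$ produced the value $u$ is valid but does more work than necessary; the key fact you are implicitly reproving case-by-case is precisely that $\mathsf b^{(i-1)}$ is itself a $\nu$-bracket vector.
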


\begin{proof}
Let us write $\row(\mathsf b)=\widehat{\mathsf b}=(\widehat{\mathsf b}_0,\ldots,\widehat{\mathsf b}_\ell)$. Let $i=(m+1)s+1$. Since $\varphi_{\mathsf b}(i)=j\neq\emptyset$, we must have $i\in\Delta(\mathsf b)$. Therefore, \cref{cor:delta} tells us that $\widehat{\mathsf b}_i=z<\mathsf b_i$, where $z=\eta_i(\mathsf b)$. In order to show that $i$ is $u$-initial in $\widehat{\mathsf b}$, we need to show that $z=u$ and that $z\not\in\{\widehat{\mathsf b}_0,\ldots,\widehat{\mathsf b}_{i-1}\}$. 

The definition of $\eta_i(\mathsf b)$ ensures that $\mathsf b_r\leq z$ for all $i\leq r\leq f_z$. Since $\mathsf b_j=\mathsf b_i>z$, this implies that $j>f_z$. But $j=f_{u}+1$, so $u\geq z$. Suppose by way of contradiction that $u>z$. Then $i<f_z+1<j$, and \cref{lem:etazthing} tells us that $\mathsf b_{f_z+1}=\mathsf b_i$. This contradicts the minimality in the definition of $\varphi_{\mathsf b}(i)$, so we conclude that $u=z$. 

Suppose $\widehat{\mathsf b}_r=z$ for some $r\in[0,i-1]$. It follows from \cref{thm:rowmotionbracket} that $\mathsf b_r^{(i-1)}=\widehat{\mathsf b}_r$, so $\mathsf b_r^{(i-1)}=z=u$ and $\mathsf b_i^{(i-1)}=\mathsf b_i=\mathsf b_j\geq b_j(\nu)=u+1$; however, this contradicts condition~\eqref{Item3} in \cref{DefMeet2} (applied to $\mathsf b^{(i-1)}$) because $r+1\leq i\leq f_{u}$. 
\end{proof}

\begin{lemma}\label{lem:Claim1}
Let $\mathsf b\in\Tam_n(m)$, and suppose $i,j\in[2,(m+1)n]$ are such that $\varphi_{\mathsf b}(i)=j$. Then $\varphi_{\row(\mathsf b)}(i-1)=j-1$. 
\end{lemma}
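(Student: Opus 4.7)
The plan is to split on the residue of $i$ modulo $m+1$ and, in each case, verify the appropriate clause of the definition of $\varphi_{\row(\mathsf b)}$ at position $i-1$.

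First consider the case $i \equiv 1 \pmod{m+1}$, so $i-1$ is a multiple of $m+1$. To establish $\varphi_{\row(\mathsf b)}(i-1) = j-1$, it suffices to verify Case~1 of the definition of $\varphi$ at $\row(\mathsf b)$ and position $i-1$, i.e., that $i$ is $u$-initial in $\row(\mathsf b)$ with $(m+1)u = j-1$. Since $i \not\equiv 0 \pmod{m+1}$, the hypothesis $\varphi_{\mathsf b}(i) = j$ forces Case~2 of the definition of $\varphi_{\mathsf b}$, giving $i \in \Delta(\mathsf b)$, $\mathsf b_i = \mathsf b_j$, and $j \equiv i \equiv 1 \pmod{m+1}$. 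Writing $j = (m+1)u + 1$, Lemma~\ref{lem:1modm+1} immediately yields that $i$ is $u$-initial in $\row(\mathsf b)$, so Case~1 of the definition of $\varphi_{\row(\mathsf b)}$ applied at $i-1$ gives $\varphi_{\row(\mathsf b)}(i-1) = (m+1)u = j-1$, as desired.

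In the remaining case $i \not\equiv 1 \pmod{m+1}$, both $i-1$ and $j-1$ are non-fixed positions, since $i \equiv j \pmod{m+1}$ in both cases of the definition of $\varphi_{\mathsf b}$. I would aim to verify the three conditions of Case~2 of the definition of $\varphi_{\row(\mathsf b)}$ at position $i-1$: (i) $i-1 \in \Delta(\row(\mathsf b))$, (ii) $\row(\mathsf b)_{i-1} = \row(\mathsf b)_{j-1}$, and (iii) no index $j' \in (i-1, j-1)$ with $j' \equiv i-1 \pmod{m+1}$ satisfies $\row(\mathsf b)_{j'} = \row(\mathsf b)_{i-1}$. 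The natural further split is into Sub-case B1 ($i \equiv 0 \pmod{m+1}$, so $\varphi_{\mathsf b}(i) = j$ via Case~1, meaning $j = (m+1)k$ where $i+1$ is $k$-initial in $\mathsf b$) and Sub-case B2 ($i \equiv t \pmod{m+1}$ with $t \in [2,m]$, so via Case~2 with $\mathsf b_i = \mathsf b_j$).

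To compute $\row(\mathsf b)_{i-1}$, $\row(\mathsf b)_{j-1}$, and intermediate values, I would use the slow-motion description in Theorem~\ref{thm:rowmotionbracket}: since $\mathsf b^{(i-2)}$ agrees with $\mathsf b$ from position $i-1$ onward, the entries of $\row(\mathsf b)$ at positions $i-1, i, \ldots, j-1$ are determined by iteratively applying $\Theta_{i-1}, \Theta_i, \ldots, \Theta_{j-1}$. In Sub-case B1, the relevant inputs are that $\mathsf b_{i+1} = k$ with $i+1$ being the first occurrence of $k$, combined with Remark~\ref{rem:decreasing} to control $\mathsf b_{j-1}$, and Lemma~\ref{lem:etazthing} to pin down $\eta$-values for $\Theta_i$. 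In Sub-case B2, the equality $\mathsf b_i = \mathsf b_j$ together with the minimality defining $\varphi_{\mathsf b}(i) = j$ identifies the relevant entries. In both sub-cases, I expect to show that $\row(\mathsf b)_{i-1}$ and $\row(\mathsf b)_{j-1}$ equal a common explicit value determined by $\mathsf b$, which simultaneously establishes (i) and (ii).

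The main obstacle is condition~(iii), the minimality of $j-1$. The approach is by contradiction: a putative intermediate $j'$ would, via the slow-motion analysis, pull back to either a $121$-pattern in $\mathsf b$ (contradicting Remark~\ref{rem:121}) or a smaller candidate in the minimization defining $\varphi_{\mathsf b}(i) = j$ (contradicting that definition). The tricky point is that the $\Theta$-maps acting between positions $i-1$ and $j-1$ can introduce new occurrences of the target value in $\row(\mathsf b)$ that are not directly visible in $\mathsf b$; ruling out these intermediate coincidences requires combining $121$-avoidance (Remark~\ref{rem:121}) with the description of $\zeta$ and $\eta$ in Theorem~\ref{thm:rowmotionbracket}, and carefully tracking which entries can be introduced by Case~(3) of the definition of $\Theta_r$ during the slow-motion process.
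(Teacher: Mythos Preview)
Your case decomposition and the handling of $i\equiv 1\pmod{m+1}$ via Lemma~\ref{lem:1modm+1} match the paper exactly, and your identification of the tools (Theorem~\ref{thm:rowmotionbracket}, Remarks~\ref{rem:121} and~\ref{rem:decreasing}, Lemma~\ref{lem:etazthing}) is correct. However, the proposal stops short of the two substantive ideas that carry the argument in the non-trivial sub-cases, and your sketch for condition~(iii) does not yet point toward them.

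First, the paper does not compute $\widehat{\mathsf b}_{i-1}$ and $\widehat{\mathsf b}_{j-1}$ as a ``common explicit value determined by $\mathsf b$''; rather it sets $y=\widehat{\mathsf b}_{i-1}$ (which depends on the already-processed prefix of $\row(\mathsf b)$) and proves by induction on $\alpha$ an \emph{exclusion-zone} statement: $\widehat{\mathsf b}_\alpha\notin[x,y-1]$ for all $\alpha$ in $[0,j-1]$ (this is \eqref{Eq2} in Case~1 and \eqref{Eq4}, together with the preliminary \eqref{Eq7} showing $y\ge x$, in Case~2). This inductive exclusion zone is what forces $\zeta_{j-1}(\mathsf b^{(j-2)})=y$, hence $\widehat{\mathsf b}_{j-1}=y$; your outline does not indicate this mechanism, and without it there is no route to~(ii).

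Second, for minimality~(iii) the paper does \emph{not} argue by contradiction from a single putative $j'$. Instead it constructs an increasing sequence $z_1<z_2<\cdots$ as follows: starting from $z_1=\eta_i(\mathsf b)$ (or $z_1=i/(m+1)$ in Case~2), Lemma~\ref{lem:etazthing} together with the minimality defining $\varphi_{\mathsf b}(i)=j$ shows that $\mathsf b$ has a descent at some position $(m+1)z_h+u_h$ with $u_h<t$, whose $\eta$-value $z_{h+1}$ is again $<x$; this covers another block of intermediate congruent positions with values $<x$. Iterating until some $z_h\ge s'$ shows every intermediate $\widehat{\mathsf b}_{(m+1)v+t-1}<x\le y$. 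Your proposed contradiction (``pull back to a $121$-pattern or a smaller candidate'') is the germ of one step of this iteration, but the proof needs the full inductive construction, and Lemma~\ref{lem:etazthing} is used at each step in a way your sketch does not anticipate.
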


\begin{proof}
Let $\row(\mathsf b)=\widehat{\mathsf b}=(\widehat{\mathsf b}_0,\ldots,\widehat{\mathsf b}_\ell)$. It follows from the definition of $\varphi_{\mathsf b}$ that $i\equiv j\pmod{m+1}$ and $i<j$. If $i\equiv j\equiv 1\pmod{m+1}$, then \cref{lem:1modm+1} tells us that $i$ is $u$-initial in $\widehat{\mathsf b}$, where $u$ is such that $j=(m+1)u+1=f_u+1$. In this case, it follows from the definition of $\varphi_{\widehat{\mathsf b}}$ that $\varphi_{\widehat{\mathsf b}}(i-1)=j-1$, as desired. 

In the remainder of the proof, we may assume $i\equiv j\not\equiv 1\pmod{m+1}$. Keep in mind that \[\mathsf b^{(\alpha)}=(\widehat{\mathsf b}_0,\ldots,\widehat{\mathsf b}_\alpha,\mathsf b_{\alpha+1},\ldots,\mathsf b_\ell)\] for all $\alpha\in[0,\ell]$ by \cref{thm:rowmotionbracket} (and the fact that each map $\Theta_j$ can only change the entry in position $j$ of a $\nu$-bracket vector). We now consider two cases. 

\medskip

\noindent {\bf Case 1.} Assume $i\equiv j\not\equiv 0,1\pmod{m+1}$. Let $x=\mathsf b_i$ and $y=\widehat{\mathsf b}_{i-1}$. The definition of $\varphi_{\mathsf b}$ implies that $\mathsf b_j=x$. Invoking \cref{rem:121}, we find that 
\begin{equation}\label{Eq3}
\mathsf b_r\leq x\quad\text{for all}\quad r\in[i,j].
\end{equation}
We have $y=\mathsf b^{(i-1)}_{i-1}\geq \mathsf b^{(i-1)}_{i}=\mathsf b_i=x$ by \cref{rem:decreasing}. 

We are going to prove by induction on $\alpha$ that 
\begin{equation}\label{Eq2}
    \widehat{\mathsf b}_\alpha\not\in[x,y-1]\quad \text{for all}\quad \alpha\in[0,j-1].
\end{equation} If $\alpha\leq i-2$, then this follows from condition~\eqref{Item3} in \cref{DefMeet2} since, otherwise, we would have $\alpha+1\leq i-1\leq f_{\widehat{\mathsf b}_\alpha}$ and $\widehat{\mathsf b}_{i-1}=y>\widehat{\mathsf b}_\alpha$. We also know that \eqref{Eq2} is true when $\alpha=i-1$ since $\widehat{\mathsf b}_{i-1}=y$. Now suppose $i\leq \alpha\leq j-1$. If $\alpha=f_k=(m+1)k$ for some $k$, then $f_k<j<f_x$, so $\widehat{\mathsf b}_\alpha=k<x$. If $\alpha\in\Delta(\mathsf b^{(\alpha-1)})$, then we can use \cref{thm:rowmotionbracket}, the definition of $\Theta_\alpha$, and \eqref{Eq3} to see that \[\widehat{\mathsf b}_\alpha=(\Theta_\alpha(\mathsf b^{(\alpha-1)}))_\alpha=\eta_\alpha(\mathsf b^{(\alpha-1)})<\mathsf b_\alpha^{(\alpha-1)}=\mathsf b_\alpha\leq x.\] Finally, suppose $\alpha$ is not divisible by $m+1$ (i.e., is not of the form $f_k$) and $\alpha\not\in\Delta(\mathsf b^{(\alpha-1)})$. According to \cref{thm:rowmotionbracket} and the definition of $\Theta_\alpha$, we have \[\widehat{\mathsf b}_\alpha=(\Theta_\alpha(\mathsf b^{(\alpha-1)}))_\alpha=\zeta_\alpha(\mathsf b^{(\alpha-1)})=\min\{\mathsf b_r^{(\alpha-1)}:0\leq r\leq \alpha-1\text{ and }\mathsf b_r^{(\alpha-1)}\geq \mathsf b_\alpha^{(\alpha-1)}\}.\] Our induction hypothesis implies that $\zeta_\alpha(\mathsf b^{(\alpha-1)})\not\in[x,y-1]$. This proves~\eqref{Eq2}.  

We have $i\in\Delta(\mathsf b)$ because $\varphi_{\mathsf b}(i)\neq\emptyset$. It follows from \cref{cor:delta} that $\widehat{\mathsf b}_{i}=\eta_{i}(\mathsf b)<\mathsf b_i=x$. Now, $\widehat{\mathsf b}_{i-1}=y\geq x>\eta_i(\mathsf b)=\widehat{\mathsf b}_i$, so $i-1\in\Delta(\widehat{\mathsf b})$. This means that $\varphi_{\widehat{\mathsf b}}(i-1)$ is the smallest integer $p$ such that $p>i-1$, $p\equiv i-1\pmod{m+1}$, and $\widehat{\mathsf b}_p=y$; our goal is to show that $\varphi_{\widehat{\mathsf b}}(i-1)=j-1$. By combining \eqref{Eq3} with \cref{rem:decreasing} and the fact that $\mathsf b_j=x$, we find that $\mathsf b_{j-1}=x$. Therefore, $\mathsf b_{j-1}^{(j-2)}=\mathsf b_j^{(j-2)}=x$; it follows that $j-1\not\in\Delta(\mathsf b^{(j-2)})$.  \cref{thm:rowmotionbracket} tells us that \[\widehat{\mathsf b}_{j-1}=(\Theta_{j-1}(\mathsf b^{(j-2)}))_{j-1}=\zeta_{j-1}(\mathsf b^{(j-2)})=\min\{\mathsf b_r^{(j-2)}:0\leq r\leq j-2\text{ and }\mathsf b_r^{(j-2)}\geq x\}\] \[=\min\{\widehat{\mathsf b}_r:0\leq r\leq j-2\text{ and }\widehat{\mathsf b}_r\geq x\},\] so it follows from \eqref{Eq2} and the fact that $\widehat{\mathsf b}_{i-1}=y$ that $\widehat{\mathsf b}_{j-1}=y$. Consequently, we are left to show that 
\begin{equation}\label{Eq10}
\widehat{\mathsf b}_r\neq y\text{ for all }r\in[i,j-2]\text{ satisfying }r\equiv i-1\hspace{-.25cm}\pmod{m+1}.
\end{equation} 

Let us write $i=(m+1)s+t$ and $j=(m+1)s'+t$, where $0\leq s<s'\leq n-1$ and $2\leq t\leq m$. Let $z_1=\eta_i(\mathsf b)=\widehat{\mathsf b}_i$; note that $z_1<x$. Condition~\eqref{Item3} in \cref{DefMeet2} tells us that $\widehat{\mathsf b}_r\leq z_1<x$ for all $r\in[i+1,(m+1)z_1]$. In particular, 
\begin{equation}\label{Eq:z_1}
\widehat{\mathsf b}_{(m+1)v+t-1}<x\leq y\text{ for all } v\in[s+1,z_1-1].
\end{equation} 
If $z_1\geq s'$, then this proves \eqref{Eq10}. Now suppose $z_1<s'$. We can use \cref{lem:etazthing} to conclude that $\mathsf b_{(m+1)z_1+1}=x$. By the definition of $\varphi_{\mathsf b}(i)$, we have $\mathsf b_{(m+1)z_1+t}\neq x$, so (by \cref{rem:decreasing}) there must be some $1\leq u_1<t$ such that \[{\mathsf b_{(m+1)z_1+u_1}=x>\mathsf b_{(m+1)z_1+u_1+1}}.\] Then $(m+1)z_1+u_1\in\Delta(\mathsf b)$, so it follows from \cref{cor:delta} that $\widehat{\mathsf b}_{(m+1)z_1+u_1}=\eta_{(m+1)z_1+u_1}(\mathsf b)$. Let $z_2=\eta_{(m+1)z_1+u_1}(\mathsf b)=\widehat{\mathsf b}_{(m+1)z_1+u_1}$; note that $z_2<x$. Condition~\eqref{Item3} in \cref{DefMeet2} tells us that $\widehat{\mathsf b}_r\leq z_2<x$ for all $r\in[(m+1)z_1+u_1,(m+1)z_2]$. In particular, 
\begin{equation}\label{Eq:z_2}
\widehat{\mathsf b}_{(m+1)v+t-1}<x\leq y\text{ for all }v\in[z_1,z_2-1].
\end{equation}
If $z_2\geq s'$, then \eqref{Eq:z_1} and \eqref{Eq:z_2} together complete the proof of \eqref{Eq10}, so we may assume $z_2<s'$. An argument similar to the one just given shows that there exists $1\leq u_2<t$ such that \[\mathsf b_{(m+1)z_2+u_2}=x>\mathsf b_{(m+1)z_2+u_2+1}.\] Then $(m+1)z_2+u_2\in\Delta(\mathsf b)$, so it follows from \cref{cor:delta} that $\widehat{\mathsf b}_{(m+1)z_2+u_2}=\eta_{(m+1)z_2+u_2}(\mathsf b)$. We can then define $z_3=\eta_{(m+1)z_2+u_2}(\mathsf b)$ and note that $z_3<x$. Also, $\widehat{\mathsf b}_r\leq z_3<x$ for all $r\in[(m+1)z_2+u_2,(m+1)z_3]$, so $\widehat{\mathsf b}_{(m+1)v+t-1}<x\leq y$ for all $v\in[z_2,z_3-1]$. Continuing in this fashion, we construct numbers $z_1,z_2,\ldots$ until eventually obtaining some $z_h$ that is at least $s'$, at which point we obtain a proof of \eqref{Eq10}.

\medskip

\noindent {\bf Case 2.} Assume $i\equiv j\equiv 0\pmod{m+1}$. Let $i=(m+1)z_1=f_{z_1}$ and $j=(m+1)x=f_{x}$, and note that $z_1<x$. The definition of $\varphi_{\mathsf b}$ implies that $i+1$ is $x$-initial in $\mathsf b$; in particular, $\mathsf b_{i+1}=x$. Let $y=\widehat{\mathsf b}_{i-1}$. Condition~\eqref{Item3} in \cref{DefMeet2} tells us that 
\begin{equation}\label{Eq5}
\mathsf b_{r}\leq x\quad\text{for all}\quad r\in[i+1,j].
\end{equation}

In order to prove that $\varphi_{\widehat{\mathsf b}}(i-1)=j-1$, we first want to show that $i-1\in\Delta(\widehat{\mathsf b})$, meaning $y>\widehat{\mathsf b}_i=z_1$. We will actually show that $y\geq x$. Since $y\geq z_1$ (by \cref{rem:decreasing}), we just need to prove that $y\not\in[z_1,x-1]$. In fact, we will prove by induction on $\alpha$ the stronger assertion that \begin{equation}\label{Eq7}
\widehat{\mathsf b}_\alpha\not\in[z_1,x-1]\quad\text{for all}\quad \alpha\in[0,i-1].    
\end{equation} 
First, suppose $\mathsf b_\alpha\geq x$. Since $i+1$ is $x$-initial in $\mathsf b$, we must actually have $\mathsf b_\alpha\geq x+1$. Notice that $\mathsf b^{(\alpha-1)}_r=\mathsf b_r\leq x$ for all $r\in[i+2,f_x]$ by condition~\eqref{Item3} in \cref{DefMeet2}. Also, $\mathsf b_{i+1}^{(\alpha-1)}=\mathsf b_{i+1}=x$ and $\mathsf b^{(\alpha-1)}_i=\mathsf b_i=z_1<x$. This shows that $\mathsf b_r^{(\alpha-1)}\leq x$ for all $r\in[i,f_x]$. If, in addition, we have $\alpha\in\Delta(\mathsf b^{(\alpha-1)})$ and $\mathsf b^{(\alpha-1)}_r\leq x$ for all $r\in[\alpha+1,i-1]$, then it follows from \cref{thm:rowmotionbracket} and the definition of $\eta_\alpha(\mathsf b^{(\alpha-1)})$ that \[\widehat{\mathsf b}_\alpha=(\Theta_\alpha(\mathsf b^{(\alpha-1)}))_\alpha=\eta_\alpha(\mathsf b^{(\alpha-1)})\geq x.\] On the other hand, if there exists $r\in[\alpha+1,i-1]$ such that $\mathsf b_r^{(\alpha-1)}>x$, then \cref{rem:121} tells us that $\widehat{\mathsf b}_{\alpha}\not\in[z_1,x-1]$ since, otherwise, the entries in positions $\alpha,r,f_{\widehat{\mathsf b}_{\alpha}}$ would form a $121$-pattern in $\mathsf b^{(\alpha)}$ (since $\mathsf b_\alpha^{(\alpha)}=\widehat{\mathsf b}_\alpha$ and $\mathsf b^{(\alpha)}_r=\mathsf b_r^{(\alpha-1)}$). Finally, if $\alpha\not\in\Delta(\mathsf b^{(\alpha-1)})$, then \[\widehat{\mathsf b}_\alpha=(\Theta_\alpha(\mathsf b^{(\alpha-1)}))_\alpha=\zeta_\alpha(\mathsf b^{(\alpha-1)})=\min\{\mathsf b_r^{(\alpha-1)}:0\leq r\leq \alpha-1\text{ and }\mathsf b_r^{(\alpha-1)}\geq \mathsf b_\alpha^{(\alpha-1)}\}\] \[=\min\{\widehat{\mathsf b}_r:0\leq r\leq \alpha-1\text{ and }\widehat{\mathsf b}_r\geq \mathsf b_\alpha^{(\alpha-1)}\},\] so it follows by induction on $\alpha$ that $\widehat{\mathsf b}_\alpha\not\in[x,y-1]$ in this case as well. This proves \eqref{Eq7}, so we conclude that $y\geq x$ and $i-1\in\Delta(\widehat{\mathsf b})$.

We are now going to prove by induction on $\alpha$ that 
\begin{equation}\label{Eq4}
    \widehat{\mathsf b}_\alpha\not\in[x,y-1]\quad \text{for all}\quad \alpha\in[0,j-1].
\end{equation} 
This is certainly true if $0\leq \alpha\leq i-2$ since, otherwise, we would have $\alpha+1\leq i-1\leq f_{\widehat{\mathsf b}_\alpha}$ and $\widehat{\mathsf b}_{i-1}=y>\widehat{\mathsf b}_\alpha$ (violating condition~\eqref{Item3} in \cref{DefMeet2}). We also know that \eqref{Eq4} is true when $\alpha=i-1$ or $\alpha=i$ since $\widehat{\mathsf b}_{i-1}=y$ and $\widehat{\mathsf b}_i=\widehat{\mathsf b}_{f_{z_1}}=z_1<x$. Now suppose $i+1\leq \alpha\leq j-1$. If $\alpha=f_k=(m+1)k$ for some $k$, then $f_k<j=f_x$, so $\widehat{\mathsf b}_\alpha=k<x$. If $\alpha\in\Delta(\mathsf b^{(\alpha-1)})$, then we can use \cref{thm:rowmotionbracket}, the definition of $\Theta_\alpha$, and \eqref{Eq5} to see that \[\widehat{\mathsf b}_\alpha=(\Theta_\alpha(\mathsf b^{(\alpha-1)}))_\alpha=\eta_\alpha(\mathsf b^{(\alpha-1)})<\mathsf b_\alpha^{(\alpha-1)}=\mathsf b_\alpha\leq x.\] Finally, suppose $\alpha$ is not divisible by $m+1$ (i.e., is not of the form $f_k$) and $\alpha\not\in\Delta(\mathsf b^{(\alpha-1)})$. According to \cref{thm:rowmotionbracket} and the definition of $\Theta_\alpha$, we have \[\widehat{\mathsf b}_\alpha=(\Theta_\alpha(\mathsf b^{(\alpha-1)}))_\alpha=\zeta_\alpha(\mathsf b^{(\alpha-1)})=\min\{\mathsf b_r^{(\alpha-1)}:0\leq r\leq \alpha-1\text{ and }\mathsf b_r^{(\alpha-1)}\geq \mathsf b_\alpha^{(\alpha-1)}\}.\] Our induction hypothesis implies that $\zeta_\alpha(\mathsf b^{(\alpha-1)})\not\in[x,y-1]$. This proves~\eqref{Eq4}.  

We saw above that $i-1\in\Delta(\widehat{\mathsf b})$, so $\varphi_{\widehat{\mathsf b}}(i-1)$ is the smallest integer $p$ such that $p>i-1$, $p\equiv -1\pmod{m+1}$, and $\widehat{\mathsf b}_p=y$; our goal is to show that $\varphi_{\widehat{\mathsf b}}(i-1)=j-1$. By combining \eqref{Eq7} with \cref{rem:decreasing} and the fact that $\mathsf b_j=x$, we find that $\mathsf b_{j-1}=x$. Since $\mathsf b_{j-1}^{(j-2)}=\mathsf b_j^{(j-2)}=x$, we have $j-1\not\in\Delta(\mathsf b^{(j-2)})$.  \cref{thm:rowmotionbracket} tells us that \[\widehat{\mathsf b}_{j-1}=(\Theta_{j-1}(\mathsf b^{(j-2)}))_{j-1}=\zeta_{j-1}(\mathsf b^{(j-2)})=\min\{\mathsf b_r^{(j-2)}:0\leq r\leq j-2\text{ and }\mathsf b_r^{(j-2)}\geq x\}\] \[=\min\{\widehat{\mathsf b}_r:0\leq r\leq j-2\text{ and }\widehat{\mathsf b}_r\geq x\},\] so it follows from \eqref{Eq4} and the fact that $\widehat{\mathsf b}_{i-1}=y$ that $\widehat{\mathsf b}_{j-1}=y$. Therefore, we are left to show that $\widehat{\mathsf b}_{(m+1)v+m}\neq y$ for all $v\in[z_1,x-2]$. This is vacuously true if $z_1=x-1$, so we may assume $z_1<x-1$.   

Recall that $\mathsf b_{(m+1)z_1+1}=\mathsf b_{i+1}=x$. Let $u_1$ be the largest element of $[m]$ such that ${\mathsf b_{(m+1)z_1+u_1}=x}$. Then $(m+1)z_1+u_1\in\Delta(\mathsf b)$ (by \cref{rem:decreasing}), so it follows from \cref{cor:delta} that \[\widehat{\mathsf b}_{(m+1)z_1+u_1}=\eta_{(m+1)z_1+u_1}(\mathsf b).\] Let $z_2=\eta_{(m+1)z_1+u_1}(\mathsf b)=\widehat{\mathsf b}_{(m+1)z_1+u_1}$; note that $z_2<x$ since $\mathsf b_{(m+1)z_1+u_1}\leq x$ by \eqref{Eq5}. Condition~\eqref{Item3} in \cref{DefMeet2} tells us that $\widehat{\mathsf b}_r\leq z_2<x$ for all 
$r\in[(m+1)z_1+u_1,(m+1)z_2]$. In particular, $\widehat{\mathsf b}_{(m+1)v+m}<x$ for all $v\in[z_1,z_2-1]$. Let $u_2$ be the largest element of $[m]$ such that $\mathsf b_{(m+1)z_2+u_2}=x$. If $z_2<x-1$, then $(m+1)z_2+u_2\in\Delta(\mathsf b)$, so it follows from \cref{cor:delta} that $\widehat{\mathsf b}_{(m+1)z_2+u_2}=\eta_{(m+1)z_2+u_2}(\mathsf b)$. We can then define $z_3=\eta_{(m+1)z_2+u_2}(\mathsf b)$ and note that $z_3<x$ since $\mathsf b_{(m+1)z_2+u_2}\leq x$ by \eqref{Eq5}. Also, $\widehat{\mathsf b}_r\leq z_3<x$ for all $r\in[(m+1)z_2+u_2,(m+1)z_3]$, so $\widehat{\mathsf b}_{(m+1)v+m}<x$ for all $v\in[z_2,z_3-1]$. Continuing in this fashion, we construct numbers $z_1,z_2,\ldots$ until eventually obtaining some $z_h$ that is equal to $x-1$. This argument shows that $\widehat{\mathsf b}_{(m+1)v+m}<x\leq y$ for all $v\in[z_1,x-2]$, which proves the desired result in this case. 
\end{proof}

\begin{lemma}\label{lem:Claim2}
Let $\mathsf b\in\Tam_n(m)$. Let $N$ be the largest element of the block of $\Psi(\mathsf b)$ containing $1$, and assume $N>1$ (i.e., $1$ is not in a singleton block). Then  $\varphi_{\row(\mathsf b)}(N-1)=(m+1)n$. 
\end{lemma}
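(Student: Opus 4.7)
The plan is to reduce the lemma to showing that $N$ is $n$-initial in $\row(\mathsf b)$. This suffices because $N\equiv 1\pmod{m+1}$ and $N\neq f_x$ give $N<f_x\leq(m+1)n$ (where $x:=\mathsf b_1$), so $N-1$ is a positive multiple of $m+1$ strictly below $(m+1)n$, and Case~1 of the definition of $\varphi_{\row(\mathsf b)}$ then produces $\varphi_{\row(\mathsf b)}(N-1)=f_n=(m+1)n$. To set up the chain structure, I would express the block of $1$ in $\Psi(\mathsf b)$ as $1=a_0<a_1<\cdots<a_K=N$ with $a_{i+1}=\varphi_{\mathsf b}(a_i)$; Case~2 of the definition of $\varphi_{\mathsf b}$ shows inductively that each $a_i\equiv 1\pmod{m+1}$, each $\mathsf b_{a_i}=x$, and that the chain enumerates \emph{every} position $j\in[1,N]$ satisfying these two conditions. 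The hypothesis $N>1$ forces $\varphi_{\mathsf b}(1)\neq\emptyset$, hence $1\in\Delta(\mathsf b)$, so $\mathsf b_2<x$ and Remark~\ref{rem:decreasing} extends this to $\mathsf b_\alpha<x$ for all $\alpha\in[2,f_1]$.

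The next ingredient is a structural observation: for every $\alpha\in[2,N-1]$ with $\mathsf b_\alpha=x$, one has $\alpha\in\Delta(\mathsf b)$. If $\alpha\equiv 1\pmod{m+1}$ this is immediate, since $\alpha=a_I$ for some $I\in[1,K-1]$ and $\varphi_{\mathsf b}(a_I)\neq\emptyset$. If $\alpha\not\equiv 1\pmod{m+1}$, the slot $[f_{k-1}+1,f_k]$ containing $\alpha$ must satisfy $f_{k-1}+1>1$ (otherwise $\alpha\in[2,f_1]$ would contradict $\mathsf b_\alpha=x$) and $\mathsf b_{f_{k-1}+1}=x$ (via Remark~\ref{rem:decreasing} and condition~\eqref{Item3}); this places $f_{k-1}+1$ among $\{a_1,\ldots,a_{K-1}\}$, so $f_{k-1}+1\in\Delta(\mathsf b)$ forces $\mathsf b_{f_{k-1}+2}<x$, contradicting $\mathsf b_\alpha=x$ through Remark~\ref{rem:decreasing}. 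The consequence I need is that every $\alpha\in[1,N-1]\setminus\Delta(\mathsf b)$ has $\mathsf b_\alpha<x$.

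The main body is an induction on $\alpha\in[0,N-1]$ proving $\row(\mathsf b)_\alpha<x$, using the slow-motion formula of Theorem~\ref{thm:rowmotionbracket} with $\mathsf b^{(\alpha)}=(\Theta_\alpha\circ\cdots\circ\Theta_0)(\mathsf b)$. The cases where $\alpha=f_k$ is a fixed position (so $\row(\mathsf b)_\alpha=k<x$ since $\alpha<f_x$) or where $\alpha\in\Delta(\mathsf b)$ (so $\row(\mathsf b)_\alpha=\eta_\alpha(\mathsf b)<\mathsf b_\alpha\leq x$) are immediate. The delicate case is when $\alpha$ is neither a fixed position nor in $\Delta(\mathsf b)$: then $v:=\mathsf b_\alpha<x$ by the structural observation and $\row(\mathsf b)_\alpha=\zeta_\alpha(\mathsf b^{(\alpha-1)})$, and the crux---which I expect to be the main obstacle---is showing that the set $\{\mathsf b^{(\alpha-1)}_j:j<\alpha,\,\mathsf b^{(\alpha-1)}_j\geq v\}$ is non-empty (for then the inductive hypothesis bounds its minimum by $x-1$). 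My plan is to pick the unique $I\in[1,K]$ with $a_{I-1}<\alpha<a_I$ (available since $\alpha\in(a_0,a_K)$ is not a chain element), then use condition~\eqref{Item3} applied to $\mathsf b_\alpha=v$ together with $\mathsf b_{a_I}=x>v$ to force $a_I>f_v$, so that $u_{I-1}:=(a_I-1)/(m+1)\geq v$; Lemma~\ref{lem:1modm+1} applied to $\varphi_{\mathsf b}(a_{I-1})=a_I$ then yields $\row(\mathsf b)_{a_{I-1}}=u_{I-1}\geq v$ at the position $a_{I-1}<\alpha$. With the induction in hand, applying $\Theta_N$ (noting that $N$ is not fixed and $N\not\in\Delta(\mathsf b)$ because $\varphi_{\mathsf b}(N)=\emptyset$) shows that the set defining $\zeta_N(\mathsf b^{(N-1)})$ is empty, so $\row(\mathsf b)_N=n$; combined with $\row(\mathsf b)_j<x\leq n$ for $j<N$, this makes $N$ the smallest index where $\row(\mathsf b)$ attains $n$, completing the proof.
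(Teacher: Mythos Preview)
Your argument is correct and shares the paper's overall plan: write the block of $1$ as $1=a_0<a_1<\cdots<a_K=N$, reduce to showing that $N$ is $n$-initial in $\row(\mathsf b)$, use Lemma~\ref{lem:1modm+1} along the chain, and conclude that the set defining $\zeta_N(\mathsf b^{(N-1)})$ is empty.

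Where you diverge is in how you bound $\widehat{\mathsf b}_\alpha$ for $\alpha\in[0,N-1]$. You prove $\widehat{\mathsf b}_\alpha<x$ by a three-case induction, which in turn requires your structural observation (that every $\alpha\in[2,N-1]$ with $\mathsf b_\alpha=x$ lies in $\Delta(\mathsf b)$) and, in the delicate case, a witness extracted from Lemma~\ref{lem:1modm+1} to rule out $\zeta_\alpha=n$. The paper bypasses all of this: having established via Lemma~\ref{lem:1modm+1} that each $a_{t}$ is $u_{t+1}$-initial in $\widehat{\mathsf b}$ (where $a_{t+1}=(m+1)u_{t+1}+1$), it applies condition~\eqref{Item3} of Definition~\ref{DefMeet2} directly to $\widehat{\mathsf b}$ on each segment $[a_t,a_{t+1}-1]$, obtaining $\widehat{\mathsf b}_r\leq u_{t+1}\leq u_K$ for every $r\in[0,N-1]$ in one line. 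Since $\mathsf b_N^{(N-1)}\geq b_N(\nu)=u_K+1$, emptiness follows immediately. Your route works, and the witness idea is a nice use of Lemma~\ref{lem:1modm+1}, but the paper's interval-by-interval bound via condition~\eqref{Item3} is substantially shorter and avoids both the structural observation and the induction entirely.
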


\begin{proof}
Let $\widehat{\mathsf b}=\row(\mathsf b)$. Let $i_1<\cdots<i_p$ be the elements of the block of $\Psi(\mathsf b)$ containing $1$. Then $i_1=1$ and $i_p=N$. The description of $\Psi(\mathsf b)$ given in \cref{lem:Psi1} tells us that $\varphi_{\mathsf b}(i_t)=i_{t+1}$ for all $t\in[p-1]$. It follows from the definition of $\varphi_{\mathsf b}$ that $1=i_1\equiv\cdots\equiv i_p\pmod{m+1}$. Let us write $i_t=(m+1)u_t+1$ for all $t\in[p]$. Since $N-1\equiv 0\pmod{m+1}$, we can refer to the definition of $\varphi_{\mathsf b}$ to see that our goal is to show that $N$ is $n$-initial in $\widehat{\mathsf b}$. 

Consider $t\in[p-1]$. It follows from \cref{lem:1modm+1} that $i_t$ is $u_{t+1}$-initial in $\row(\mathsf b)$. Because $f_{u_{t+1}}=(m+1)u_{t+1}=i_{t+1}-1$, it follows from condition~\eqref{Item3} in \cref{DefMeet2} that $\widehat{\mathsf b}_r\leq u_{t+1}$ for all $r\in[i_t,i_{t+1}-1]$. This shows that $\widehat{\mathsf b}_r\leq u_{p}$ for all $r\in[0,N-1]$. In particular, $\widehat{\mathsf b}_r\neq n$ for all $r\in[0,N-1]$. Therefore, we will be done if we can prove that $\widehat{\mathsf b}_N=n$.  

Because $N$ is the largest element of its block in $\Psi(\mathsf b)$, it is in $\varphi_{\mathsf b}^{-1}(\emptyset)$ by \cref{lem:Psi1}. This means that $N\not\in\Delta(\mathsf b)$. Since $\mathsf b^{(N-1)}_N=\mathsf b_N$ and $\mathsf b^{(N-1)}_{N+1}=\mathsf b_{N+1}$, we also have $N\not\in\Delta(\mathsf b^{(N-1)})$. By \cref{thm:rowmotionbracket} and the definition of $\Theta_N$, we have $\widehat{\mathsf b}_N=(\Theta_N(\mathsf b^{(N-1)}))_N=\zeta_N(\mathsf b^{(N-1)})$. Recall that our convention is that $\zeta_N(\mathsf b^{(N-1)})=n$ if the set $\{\mathsf b_r^{(N-1)}:0\leq r\leq N-1\text{ and }\mathsf b_r^{(N-1)}\geq \mathsf b_N^{(N-1)}\}$ is empty. We know that $\mathsf b^{(N-1)}_r=\widehat{\mathsf b}_r\leq u_p$ for all $r\in[0,N-1]$. On the other hand, we have $\mathsf b^{(N-1)}_N\geq b_N(\nu)=u_p+1$. This shows that $\{\mathsf b_r^{(N-1)}:0\leq r\leq N-1\text{ and }\mathsf b_r^{(N-1)}\geq \mathsf b_N^{(N-1)}\}$ is indeed empty, so $\widehat{\mathsf b}_N=\zeta_N(\mathsf b^{(N-1)})=n$. 
\end{proof}

We are now finally in a position to prove the main result of this section. 

\begin{theorem}\label{thm:nuTamariMain}
The map $\Psi\colon\Tam_n(m)\to\ShNC^{(m)}(n)$ is a bijection satisfying \[\Psi\circ\row=\Rot\circ\Psi.\] 
\end{theorem}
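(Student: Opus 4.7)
My plan is to leverage the fact that $\Psi$ is already known to be a bijection (Proposition~\ref{prop:Psibijection}), so the only remaining task is to verify the equivariance $\Psi(\row(\mathsf b))=\Rot(\Psi(\mathsf b))$. The key observation is that a set partition of $[(m+1)n]$ is uniquely determined by its \emph{successor function} $\sigma$ sending each non-maximal element of a block to the next larger element in the same block (and sending each maximal element to $\emptyset$). By Lemma~\ref{lem:Psi1}, the successor function of $\Psi(\mathsf c)$ is exactly $\varphi_{\mathsf c}$ for any $\mathsf c\in\Tam_n(m)$. So equivariance reduces to checking that $\varphi_{\row(\mathsf b)}$ agrees with the successor function of $\Rot(\Psi(\mathsf b))$.

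First, I will describe the successor function of $\Rot(\Psi(\mathsf b))$ in terms of the blocks of $\Psi(\mathsf b)$. For each block $\{i_1<\cdots<i_p\}$ of $\Psi(\mathsf b)$ with $i_1\geq 2$, the corresponding block of $\Rot(\Psi(\mathsf b))$ is $\{i_1-1<\cdots<i_p-1\}$, so its successor arrows are $i_t-1\mapsto i_{t+1}-1$ for $t\in[p-1]$. For the block containing $1$, say $\{1=i_1<i_2<\cdots<i_p\}$ with $p\geq 2$ and $i_p=N$, the corresponding block of $\Rot(\Psi(\mathsf b))$ is $\{i_2-1<\cdots<i_p-1<(m+1)n\}$, so its successor arrows are $i_t-1\mapsto i_{t+1}-1$ for $t\in[2,p-1]$ together with the wraparound arrow $N-1\mapsto(m+1)n$. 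Finally, if $\{1\}$ is a singleton block of $\Psi(\mathsf b)$, then $\{(m+1)n\}$ is a singleton block of $\Rot(\Psi(\mathsf b))$, contributing no successor arrows.

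Second, I will match these arrows with non-$\emptyset$ values of $\varphi_{\row(\mathsf b)}$. Every link $\varphi_{\mathsf b}(i)=j$ with $i\geq 2$ (which forces $j\in[3,(m+1)n]$) gives, by Lemma~\ref{lem:Claim1}, the link $\varphi_{\row(\mathsf b)}(i-1)=j-1$; this matches all the ``interior'' arrows in the block descriptions above. The one remaining arrow (the wraparound) in the case when the block of $\Psi(\mathsf b)$ containing $1$ has maximum $N>1$ is precisely the conclusion of Lemma~\ref{lem:Claim2}: $\varphi_{\row(\mathsf b)}(N-1)=(m+1)n$.

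Third, I will close the argument by counting to rule out any extra non-$\emptyset$ values of $\varphi_{\row(\mathsf b)}$. By Lemma~\ref{lem:mn+1blocks} and Proposition~\ref{prop:ShNC}, both $\Psi(\mathsf b)$ and $\Psi(\row(\mathsf b))$ lie in $\ShNC^{(m)}(n)$ and thus have exactly $mn+1$ blocks, so each of $\varphi_{\mathsf b}$ and $\varphi_{\row(\mathsf b)}$ has exactly $(m+1)n-(mn+1)=n-1$ non-$\emptyset$ values. A direct count shows that the arrows identified via Lemmas~\ref{lem:Claim1} and~\ref{lem:Claim2} account for all $n-1$ non-$\emptyset$ values of $\varphi_{\row(\mathsf b)}$ (the source $N-1$ is not of the form $i-1$ with $i\geq 2$ and $\varphi_{\mathsf b}(i)\neq\emptyset$, since $N$ is maximal in its block). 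Hence $\varphi_{\row(\mathsf b)}$ coincides with the successor function of $\Rot(\Psi(\mathsf b))$ pointwise, and the two partitions are equal. The genuine work has already been done inside Lemmas~\ref{lem:Claim1} and~\ref{lem:Claim2}; the present theorem is essentially a bookkeeping argument assembling them into equivariance, and the only subtle point is verifying that no double-counting or missing arrows occur between the two case analyses.
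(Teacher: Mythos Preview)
Your proposal is correct and follows essentially the same approach as the paper's proof: both invoke Proposition~\ref{prop:Psibijection} for bijectivity, then use Lemmas~\ref{lem:Claim1} and~\ref{lem:Claim2} to produce successor links for $\varphi_{\row(\mathsf b)}$, and finally apply the count $\lvert\varphi_{\mathsf b}^{-1}(\emptyset)\rvert=\lvert\varphi_{\row(\mathsf b)}^{-1}(\emptyset)\rvert=mn+1$ (equivalently, each map has exactly $n-1$ non-$\emptyset$ values) to conclude that no further links exist. The paper packages the counting via sets $\mathcal X,\mathcal Y,\mathcal Z$ of pairs and splits into the cases $N=1$ and $N>1$, while you phrase it in terms of successor functions, but the content is identical.
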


\begin{proof}
We already know by \cref{prop:Psibijection} that $\Psi$ is a bijection. Consider a $\nu$-bracket vector $\mathsf b\in\Tam_n(m)$; our goal is to show that $\Psi(\row(\mathsf b))=\Rot(\Psi(\mathsf b))$. We will make use of \cref{lem:mn+1blocks} (or rather, its proof), which tells us that 
\begin{equation}\label{Eq8}
\lvert\varphi_{\mathsf b}^{-1}(\emptyset)\rvert=\lvert\varphi_{\row(\mathsf b)}^{-1}(\emptyset)\rvert=mn+1.
\end{equation}
Let $N$ be the largest element of the block of $\Psi(\mathsf b)$ containing $1$. Let $\mathcal X$ be the set of pairs $(\alpha,\alpha')$ such that $\alpha,\alpha'\in[(m+1)n]$ and $\varphi_{\row(\mathsf b)}(\alpha)=\alpha'$. Note that $\lvert\mathcal X\rvert=(m+1)n-\lvert\varphi_{\row(\mathsf b)}^{-1}(\emptyset)\rvert=n-1$ by~\eqref{Eq8}. 

Suppose $N=1$. Let $\mathcal Y$ be the set of pairs of the form $(i-1,j-1)$ such that $i,j\in[2,(m+1)n]$ and $\varphi_{\mathsf b}(i)=j$. Since $N=1$, we have $\varphi_{\mathsf b}(1)=\emptyset$, so $|\mathcal Y|=(m+1)n-|\varphi_{\mathsf b}^{-1}(\emptyset)|=n-1$ by~\eqref{Eq8}.  \cref{lem:Claim1} tells us that $\mathcal Y\subseteq\mathcal X$, so we must have $\mathcal X=\mathcal Y$. This implies that $\Psi(\row(\mathsf b))=\Rot(\Psi(\mathsf b))$, as desired. 

Now suppose $N>1$. Let $\mathcal Z'$ be the set of pairs of the form $(i-1,j-1)$ such that $i,j\in[2,(m+1)n]$ and $\varphi_{\mathsf b}(i)=j$. Let $\mathcal Z=\mathcal Z'\cup\{(N,(m+1)n+1)\}$. Since $N>1$, we have $\varphi_{\mathsf b}(1)\neq\emptyset$, so $|\mathcal Z'|=|([(m+1)n]\setminus\varphi_{\mathsf b}^{-1}(\emptyset))\setminus\{1\}|=n-2$ by \eqref{Eq8}. This shows that $|\mathcal Z|=n-1=|\mathcal X|$. \cref{lem:Claim1,lem:Claim2} tell us that $\mathcal Z\subseteq\mathcal X$, so we must have $\mathcal X=\mathcal Z$. This implies that $\Psi(\row(\mathsf b))=\Rot(\Psi(\mathsf b))$, as desired. 
\end{proof}

The following result settles Thomas and Williams's \cref{conj:ThomasWilliams} when $a\equiv 1\pmod{b}$.

\begin{theorem}\label{cor:mainnuTamari}
The order of $\row\colon\Tam_n(m)\to\Tam_n(m)$ is $(m+1)n$. Moreover, the triple $(\Tam_n(m),\row,\Cat_n^{(m)}(q))$ exhibits the cyclic sieving phenomenon.  
\end{theorem}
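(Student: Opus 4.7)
The plan is to derive Theorem~\ref{cor:mainnuTamari} as a direct corollary of the equivariant bijection $\Psi$ established in Theorem~\ref{thm:nuTamariMain}, together with the cyclic sieving result~\eqref{Eq9}. All of the substantive combinatorial work has already been carried out in the construction and verification of $\Psi$; what remains is simply to transport orbit-theoretic information across the bijection.

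First, I would record the basic consequence of $\Psi \circ \row = \Rot \circ \Psi$: since $\Psi$ is a bijection that intertwines the two maps, it sends $\row$-orbits to $\Rot$-orbits of the same cardinality. In particular, for every integer $d$, the map $\Psi$ restricts to a bijection $\{\mathsf b \in \Tam_n(m) : \row^d(\mathsf b) = \mathsf b\} \to \{\rho \in \ShNC^{(m)}(n) : \Rot^d(\rho) = \rho\}$, so these two sets have the same cardinality. Consequently, the order of $\row$ on $\Tam_n(m)$ equals the order of $\Rot$ on $\ShNC^{(m)}(n)$, and the two dynamical systems have identical orbit structures.

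Next, I would invoke~\eqref{Eq9}, which states that $(\ShNC^{(m)}(n), \Rot, \Cat_n^{(m)}(q))$ exhibits the cyclic sieving phenomenon. Combined with the previous paragraph, this immediately yields that $(\Tam_n(m), \row, \Cat_n^{(m)}(q))$ exhibits the cyclic sieving phenomenon with respect to the same common order. To conclude that this order is exactly $(m+1)n$, I would appeal to the Bodnar--Rhoades result underlying~\eqref{Eq9}, which establishes that $\Rot$ acting on $\HNC^{(m)}(n)$ has order $(m+1)n$; since Kreweras complementation intertwines the $\Rot$-actions on $\HNC^{(m)}(n)$ and $\ShNC^{(m)}(n)$ (as noted in the derivation of~\eqref{Eq9}), the order on $\ShNC^{(m)}(n)$ is also $(m+1)n$, and this transfers through $\Psi$ to $\Tam_n(m)$.

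There is no real obstacle in this final step: the hard combinatorial content — namely, constructing the map $\Psi$, showing it lands in $\ShNC^{(m)}(n)$, showing it is a bijection, and verifying the intertwining relation with $\row$ — has been completed in Propositions~\ref{prop:ShNC} and~\ref{prop:Psibijection} and Theorem~\ref{thm:nuTamariMain}. The proof of Theorem~\ref{cor:mainnuTamari} will therefore occupy only a few sentences.
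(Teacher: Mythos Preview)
Your proposal is correct and follows essentially the same approach as the paper: both derive the theorem as an immediate corollary of Theorem~\ref{thm:nuTamariMain} together with~\eqref{Eq9}. The only minor difference is that the paper justifies the order of $\Rot$ on $\ShNC^{(m)}(n)$ directly (it ``clearly has order $(m+1)n$'' as a cyclic rotation of $[(m+1)n]$), whereas you route this through the Bodnar--Rhoades result on $\HNC^{(m)}(n)$ and Kreweras complementation; the direct observation is simpler and suffices.
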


\begin{proof}
The first sentence is immediate from \cref{thm:nuTamariMain} because the map \[\Rot\colon\ShNC^{(m)}(n)\to\ShNC^{(m)}(n)\] clearly has order $(m+1)n$. The second sentence is a consequence of \cref{thm:nuTamariMain} and \eqref{Eq9}. 
\end{proof}

Our final result of this section exhibits an instance of the homomesy phenomenon (see \cref{Sec:Homomesy}). Recall from the introduction that the \emph{down-degree statistic} on a poset $P$ is the function $\ddeg\colon P\to\mathbb R$ defined by $\ddeg(x)=\lvert\{y\in P:y\lessdot x\}\rvert$. 

\begin{theorem}\label{thm:homomesy}
The down-degree statistic on $\Tam_n(m)$ is homomesic for rowmotion with average \[\frac{m(n-1)}{m+1}.\] 
\end{theorem}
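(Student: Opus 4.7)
The approach is to push $\ddeg$ across the equivariant bijection $\Psi\colon\Tam_n(m)\to\ShNC^{(m)}(n)$ of Theorem~\ref{thm:nuTamariMain} and reduce to a homomesy statement under $\Rot$ that can be settled by a short orbit-counting argument on block types modulo $m+1$.

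First I would establish that $\ddeg(\mathsf b)=|\Delta(\mathsf b)|$ for every $\mathsf b\in\Tam_n(m)$. For each $i\in\Delta(\mathsf b)$, let $\mathsf b^{(i)}$ be obtained from $\mathsf b$ by replacing its entry in position $i$ with $\eta_i(\mathsf b)$. A $121$-pattern analysis using the maximality in the definition of $\eta_i(\mathsf b)$ together with the bound $i<f_{\eta_i(\mathsf b)}$ shows that $\mathsf b^{(i)}$ is a $\nu$-bracket vector, and the same maximality prevents any bracket vector from lying strictly between $\mathsf b^{(i)}$ and $\mathsf b$, so $\mathsf b^{(i)}\lessdot\mathsf b$. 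Conversely, a cover $\mathsf c\lessdot\mathsf b$ satisfies $\Pop(\mathsf b)\leq \mathsf c$ by Proposition~\ref{prop:pop1}, so Proposition~\ref{prop:pop2} forces the positions at which $\mathsf c$ disagrees with $\mathsf b$ to lie in $\Delta(\mathsf b)$ with $\mathsf c_i\geq\eta_i(\mathsf b)$; if $i_K$ is the largest such position and $\mathsf c_{i_K}>\eta_{i_K}(\mathsf b)$, the maximality produces $j\in(i_K,f_{\mathsf c_{i_K}}]$ with $\mathsf b_j>\mathsf c_{i_K}$, which yields a $121$-pattern at positions $i_K,j,f_{\mathsf c_{i_K}}$ in $\mathsf c$ (since $\mathsf c_j=\mathsf b_j$ for all $j>i_K$), a contradiction; hence $\mathsf c_{i_K}=\eta_{i_K}(\mathsf b)$, which then forces $\mathsf c=\mathsf b^{(i_K)}$.

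Next I would translate $|\Delta(\mathsf b)|$ to a statistic on $\Psi(\mathsf b)$. From the three-case definition of $\varphi_{\mathsf b}$, the arcs $(i,\varphi_{\mathsf b}(i))$ of $\Psi(\mathsf b)$ coming from Case~$1$ have both endpoints divisible by $m+1$, while those from Case~$2$ have both endpoints not divisible by $m+1$ and are in bijection with $\Delta(\mathsf b)$. Because every block of $\Psi(\mathsf b)\in\ShNC^{(m)}(n)$ is monochromatic modulo $m+1$, the Case~$2$ arcs are precisely the internal arcs of the blocks whose elements are not divisible by $m+1$. Writing $N_k(\rho)$ for the number of blocks of $\rho$ whose elements are all $\equiv k\pmod{m+1}$, Lemma~\ref{lem:mn+1blocks} yields
\[
\ddeg(\mathsf b)=|\Delta(\mathsf b)|=mn-\bigl((mn+1)-N_0(\Psi(\mathsf b))\bigr)=N_0(\Psi(\mathsf b))-1.
\]

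Finally, I would show that $N_0$ is homomesic under $\Rot$ with average $(mn+1)/(m+1)$. Because $\Rot$ shifts every element by $-1$, it sends each type-$k$ block to a type-$(k-1)$ block, so $N_k(\Rot(\rho))=N_{k+1}(\rho)$ and hence $N_0(\Rot^t(\rho))=N_t(\rho)$ with subscripts read modulo $m+1$. For a $\Rot$-orbit $\mathcal O$ of size $\omega$, the identity $\Rot^\omega(\rho)=\rho$ gives $N_k(\rho)=N_{k+\omega}(\rho)$, so $N_k(\rho)$ depends only on $k\bmod g$ where $g=\gcd(\omega,m+1)$; combining this with $\sum_{k=0}^m N_k(\rho)=mn+1$ from Lemma~\ref{lem:mn+1blocks} gives
\[
\sum_{\rho'\in\mathcal O}N_0(\rho')=\frac{\omega}{g}\sum_{r=0}^{g-1}N_r(\rho)=\frac{\omega(mn+1)}{m+1},
\]
yielding an orbit average of $(mn+1)/(m+1)$ for $N_0$ and thus an orbit average of $m(n-1)/(m+1)$ for $\ddeg$. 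The main obstacle is the careful $121$-pattern bookkeeping required for the cover-characterization in the first step; once $\ddeg=|\Delta|$ is in hand, both the transfer through $\Psi$ and the orbit-counting argument are routine.
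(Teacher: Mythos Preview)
Your proof is correct and follows essentially the same route as the paper: push $\ddeg$ through $\Psi$ to a block-type statistic on $\ShNC^{(m)}(n)$ and verify that this statistic is homomesic under $\Rot$. The only differences are cosmetic—you prove $\ddeg=|\Delta|$ directly (the paper cites \cite{DefantMeeting}), you package the transferred statistic as $N_0-1$ rather than the equivalent $mn-Q$, and your orbit computation uses a $\gcd(\omega,m+1)$ argument in place of the paper's averaging over the full period $(m+1)n$.
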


\begin{proof}
Suppose $\mathsf b\in\Tam_n(m)$. It follows from \cite[Proposition~4.4]{DefantMeeting} that $\ddeg(\mathsf b)=|\Delta(\mathsf b)|$. Referring to the description of $\Psi(\mathsf b)$ in \cref{lem:Psi1} and the definition of $\varphi_{\mathsf b}$, we find that $\Delta(\mathsf b)$ is the set of indices $i\in[(m+1)n]$ such that $i$ is not maximal in its block of $\Psi(\mathsf b)$ and $i\not\equiv 0\pmod{m+1}$. For $\rho\in\ShNC^{(m)}(n)$, let $Q(\rho)$ be the number of indices $i\in[(m+1)n]$ such that $i$ is maximal in its block in $\rho$ and $i\not\equiv 0\pmod{m+1}$. Then $\ddeg(\mathsf b)=|\Delta(\mathsf b)|=mn-Q(\Psi(\mathsf b))$. Invoking \cref{thm:nuTamariMain}, we find that the desired result will follow if we can prove that $Q$ is homomesic for $\Rot\colon\ShNC^{(m)}(n)\to\ShNC^{(m)}(n)$ with average $mn-m(n-1)/(m+1)$.      

For a set $X\subseteq[(m+1)n]$, let $f(X)=1$ if $\max X\not\equiv 0\pmod{m+1}$, and let $f(X)=0$ otherwise. Fix $\rho\in\ShNC^{(m)}(n)$. Because $\Rot\colon\ShNC^{(m)}(n)\to \ShNC^{(m)}(n)$ has order $(m+1)n$, the average of $Q$ along the orbit of $\Rot$ containing $\rho$ is \[\frac{1}{(m+1)n}\sum_{k=1}^{(m+1)n}Q(\Rot^k(\rho));\] our goal is to show that this quantity is $mn-m(n-1)/(m+1)$.
For each block $B\in\rho$ and each $k\geq 1$, let $B-k$ be the block of $\Rot^k(\rho)$ whose elements are obtained by subtracting $k$ from the elements of $B$ and reducing modulo $(m+1)n$. Since all of the elements of $B$ are congruent modulo $m+1$ (by the definition of $\ShNC^{(m)}(n)$), we have $\sum_{k=1}^{(m+1)n}f(B-k)=mn$. Thus, \[\frac{1}{(m+1)n}\sum_{k=1}^{(m+1)n}Q(\Rot^k(\rho))=\frac{1}{(m+1)n}\sum_{k=1}^{(m+1)n}\sum_{X\in\Rot^k(\rho)}f(X)=\frac{1}{(m+1)n}\sum_{k=1}^{(m+1)n}\sum_{B\in \rho}f(B-k)\] \[=\frac{1}{(m+1)n}\sum_{B\in \rho}\sum_{k=1}^{(m+1)n}f(B-k)=\frac{1}{(m+1)n}\sum_{B\in \rho}mn=\frac{1}{(m+1)n}mn(mn+1),\] where the last equality follows from the fact that $\rho$ has $mn+1$ blocks (by the definition of $\ShNC^{(m)}(n)$). Finally, note that $\frac{1}{(m+1)n}mn(mn+1)=mn-m(n-1)/(m+1)$.
\end{proof}

To end our discussion of $\nu$-Tamari lattices, we state two conjectures about rowmotion on rational Tamari lattices. The first conjecture strengthens \cref{conj:ThomasWilliams,cor:mainnuTamari}, while the second strengthens \cref{thm:homomesy}. Recall the definition of $\Tam(a,b)$ from \cref{subsec:LatticePaths}.

\begin{conjecture}\label{THING2}
Let $a$ and $b$ be relatively prime positive integers, and let \[\Cat^{(a,b)}(q)=\frac{1}{[a+b]_q}{a+b\brack b}_q.\] The triple $(\Tam(a,b),\row,\Cat^{(a,b)}(q))$ exhibits the cyclic sieving phenomenon. 
\end{conjecture}

\begin{conjecture}\label{THING}
Let $a$ and $b$ be relatively prime positive integers. The down-degree statistic on $\Tam(a,b)$ is homomesic for rowmotion with average \[\frac{(a-1)(b-1)}{a+b-1}.\]  
\end{conjecture}

\section{BiCambrian Lattices and Doubled Root Posets}\label{Sec:BiCambrianDoubled}

In the next several sections, we turn our attention to rowmotion on biCambrian lattices. We begin with some notation and terminology surrounding the main result that we seek to prove.

\subsection{Lattice Congruences}\label{Subsec:Congruences}
Let $L$ be a lattice. A \dfn{lattice congruence} of $L$ is an equivalence relation $\equiv$ on $L$ that respects meets and joins. More precisely, this means that if $x_1,x_2,y_1,y_2\in L$ satisfy $x_1\equiv x_2$ and $y_1\equiv y_2$, then $(x_1\wedge y_1)\equiv(x_2\wedge y_2)$ and $(x_1\vee y_1)\equiv(x_2\vee y_2)$. 

Each equivalence class $C$ of a lattice congruence $\equiv$ is an interval in $L$, so it has a unique minimal element $C_{\min}$ and a unique maximal element $C_{\max}$. We obtain projection operators $\pi_\downarrow,\pi^\uparrow:L\to L$ by letting $\pi_\downarrow(x)=C_{\min}$ and $\pi^\uparrow(x)=C_{\max}$ whenever $x$ is in the equivalence class $C$. Thus, $\pi_\downarrow(L)$ is the set of minimal elements of equivalence classes, while $\pi^\uparrow(L)$ is the set of maximal elements of equivalence classes. We consider $\pi_\downarrow(L)$ and $\pi^\uparrow(L)$ as subposets of $L$.

The lattice congruence $\equiv$ gives rise to a quotient lattice $L/\equiv$ whose elements are the equivalence classes of $\equiv$. The order relation on $L/\equiv$ is defined so that $C\leq C'$ if and only if there exist $x\in C$ and $x'\in C'$ such that $x\leq x'$. It is straightforward to check that $C\leq C'$ in $L/\equiv$ if and only if $C_{\min}\leq C_{\min}'$ in $L$ if and only if $C_{\max}\leq C_{\max}'$ in $L$. Thus, $\pi_\downarrow(L)$ and $\pi^\uparrow(L)$ are lattices that are naturally isomorphic to $L/\equiv$.

\subsection{Doubled Root Posets}

Let $\Phi$ be a finite root system with a corresponding Coxeter group $W$ (see \cite{BjornerBrenti,Humphreys} for relevant background). Let $\Phi^+$ be a system of positive roots for $\Phi$. If $\Phi$ is crystallographic, then there is a natural partial order on $\Phi^+$ obtained by saying $\alpha\leq\beta$ if $\beta-\alpha$ is a nonnegative linear combination of positive roots. Endowed with this partial order, $\Phi^+$ is called the \dfn{root poset} of $\Phi$. The minimal elements of $\Phi^+$ are the \dfn{simple roots}. 

Barnard and Reading \cite{BarnardReading} introduced \emph{Coxeter--biCatalan combinatorics} as a variant of Coxeter-Catalan combinatorics motivated by several fascinating enumerative phenomena. Given a  root poset $\Phi^+$, we obtain the associated \dfn{doubled root poset} by taking a copy of $\Phi^+$ and a copy of the dual of $\Phi^+$ and identifying corresponding simple roots. For example, the doubled root posets of types $A_5$ and $B_3$ are shown on the left in \cref{FigRowTam3}. In general, the doubled root poset of type $A_n$ is isomorphic to the product of two chains $[n]\times[n]$, and the doubled root poset of type $B_n$ is a shifted staircase with $n(2n-1)$ elements (this is the quotient of $[n]\times[n]$ modulo the action of the map $(i,j)\mapsto(j,i)$).  

Although the root systems of types $H_3$ and $I_2(m)$ are not (in general) crystallographic, Armstrong gave a definition of what their root posets should be \cite[Section~5.4.1]{Armstrong} (see also \cite{Cuntz}). Using this definition, we obtain the doubled root posets of types $H_3$ and $I_2(6)$ as shown on the right in \cref{FigRowTam3}. The doubled root poset of type $I_2(m)$ is the obvious analogue of the doubled root poset of type $I_2(6)$ with $2m-2$ elements. 

\begin{figure}[ht]
  \begin{center}{\includegraphics[height=5.9cm]{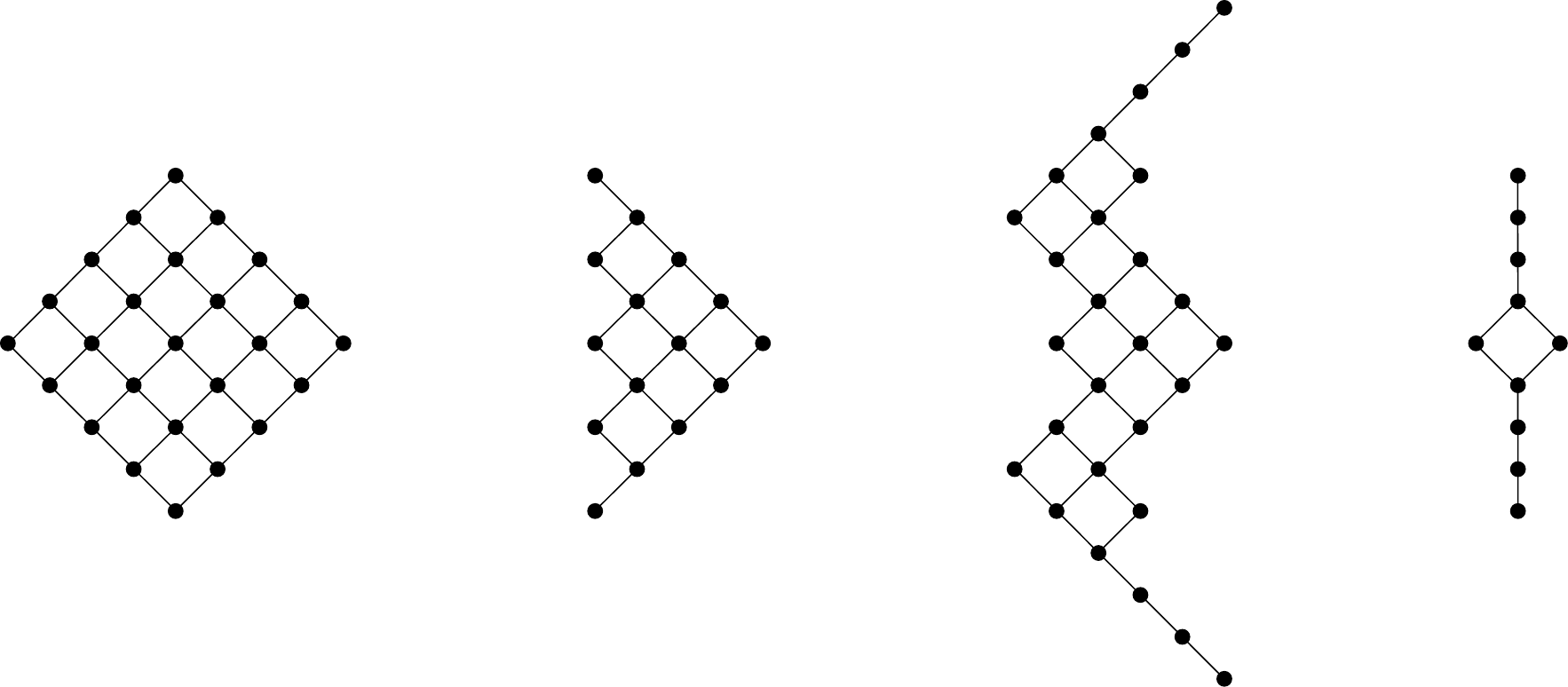}}
  \end{center}
  \caption{Doubled root posets of types $A_5$, $B_3$, $H_3$, and $I_2(6)$ (from left to right).}\label{FigRowTam3}
\end{figure}

The root systems of \dfn{coincidental type} are those of type $A$, $B$, $H_3$, or $I_2(m)$. It turns out that the doubled root poset of a root system of coincidental type is always a \emph{minuscule poset}; such posets arise naturally as the weight lattices of certain representations of simple complex Lie algebras. The orbit structures of rowmotion on the lattices of order ideals of minuscule posets are well understood. For type-$A$ minuscule posets (i.e., \emph{rectangle posets}), this follows from a theorem of Stanley \cite{StanleyPromotion} and Theorem~1.1(b) in the article \cite{CSPDefinition} by Reiner, Stanton, and White. Type-$B$ minuscule posets (i.e., \emph{shifted staircase posets}) were first worked out (with essentially the same argument used for minuscule posets of type $A$) by Striker and Williams \cite[Corollary~6.3]{StrikerWilliams}. Rush and Shi \cite{RushShi} gave a uniform treatment of all minuscule posets, proving the following theorem. Note that minuscule posets are ranked (see \cref{subsec:posets} for the definition of the rank function $\rk$).

\begin{theorem}[\cite{RushShi}]\label{thm:RushShi}
Let $P$ be a minuscule poset with rank function $\rk\colon P\to\mathbb Z_{\geq 0}$. The triple $(\mathcal J(P),\row,F_P(q))$ exhibits the cyclic sieving phenomenon, where \[F_P(q)=\prod_{x\in P}\frac{1-q^{\rk(x)+2}}{1-q^{\rk(x)+1}}.\]
\end{theorem}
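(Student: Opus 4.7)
The plan is to exploit the representation-theoretic origin of minuscule posets in order to identify $\row$ with the action of a regular Coxeter element, and then invoke Springer's theorem on regular elements to deduce the cyclic sieving.

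First, I would use the standard identification of a minuscule poset $P$ as the heap of the longest element of a minuscule parabolic quotient: there exist a finite irreducible Weyl group $W$ and a maximal parabolic subgroup $W_J$ such that the order ideals $I\in\mathcal J(P)$ biject naturally with minimal-length coset representatives $w_I\in W^J$, with $|I|=\ell(w_I)$. Under this bijection, $\sum_{I\in\mathcal J(P)}q^{|I|}$ equals the Poincaré polynomial $\sum_{w\in W^J}q^{\ell(w)}$, and a classical hook-content identity (going back to Peterson, or the Weyl character formula applied to the minuscule representation) rewrites this as the product $F_P(q)$ stated in the theorem. This already handles the evaluation of $F_P(q)$ at $q=1$, namely that $F_P(1)=|\mathcal J(P)|$.

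Next, I would identify $\row$ on $\mathcal J(P)$ with the action on $W^J$ of a Coxeter element $c\in W$ of order $h=\rk(\hat 1)+2$. The strategy is to invoke the Cameron--Fon-der-Flass description $\row=t_{x_N}\circ\cdots\circ t_{x_1}$, where $t_x$ is the order-ideal toggle at $x$ and $x_1,\ldots,x_N$ is any linear extension of $P$. In the minuscule heap setting, each toggle $t_{x_i}$ corresponds (after projection back into $W^J$) to left multiplication by the simple reflection labeling $x_i$, so the composite is a product of all simple generators, i.e., a Coxeter element $c$. Regularity of $c$ and the equality $\mathrm{ord}(c)=h$ then also pin down the order of $\row$.

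With $\row$ identified as multiplication by $c$, I would invoke Springer's theorem on regular elements: since $c$ is regular of order $h$, the number of elements of $W^J$ fixed by $c^d$ equals the value of the Poincaré polynomial $\sum_{w\in W^J}q^{\ell(w)}=F_P(q)$ at $q=e^{2\pi i d/h}$. Combined with the bijection $\mathcal J(P)\leftrightarrow W^J$, this is exactly the cyclic sieving phenomenon claimed.

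The main obstacle is the second step: identifying the composition of toggles with a Coxeter element action genuinely uses the minuscule hypothesis (for a non-minuscule parabolic quotient the toggle-versus-reflection dictionary breaks down), and a uniform proof must simultaneously handle rectangles (type $A$), shifted staircases (type $D$), and the exceptional $E_6$ and $E_7$ minuscule posets. A fallback would be a type-by-type argument using the classical promotion results of Stanley and Reiner--Stanton--White in type $A$, Striker--Williams in type $B$, and ad hoc calculations for the exceptional cases, at the cost of the conceptual uniformity that makes the statement so clean.
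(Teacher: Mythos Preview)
The paper does not actually prove this theorem: it is quoted verbatim from Rush and Shi \cite{RushShi} and used as a black box (the surrounding paragraph says ``Rush and Shi \cite{RushShi} gave a uniform treatment of all minuscule posets, proving the following theorem''). So there is no proof in the paper to compare against, and no proof was expected of you here.

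That said, your sketch is broadly the strategy that Rush and Shi (and, before them in special cases, Stanley, Reiner--Stanton--White, and Striker--Williams) follow: pass from $\mathcal J(P)$ to the minuscule parabolic quotient $W^J$, match $F_P(q)$ with the length generating function on $W^J$, identify rowmotion with a cyclic action coming from a Coxeter element, and invoke Springer's regular-element theorem. The one place I would flag is your claim that a single toggle $t_{x_i}$ corresponds, after projection to $W^J$, to left multiplication by the simple reflection labeling $x_i$; this dictionary is more delicate than you indicate (left multiplication by a simple reflection need not preserve $W^J$, and the composite you describe is a product of $|P|$ reflections, not $\mathrm{rank}(W)$ of them). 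In the actual literature the identification of rowmotion with a Coxeter element goes through an intermediate equivariant bijection with promotion on standard tableaux or an analogous combinatorial model, and that step is where most of the work lies. Your fallback type-by-type plan is exactly what the paper itself points to in the paragraph preceding the theorem.
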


\subsection{BiCambrian Lattices}
In what follows, whenever we refer to the \emph{weak order} on a finite Coxeter group, we mean the right weak order; it is well known that the weak order is a semidistributive lattice. Given a finite irreducible Coxeter group $W$ and a Coxeter element $c$ of $W$, there is an associated lattice congruence on the weak order of $W$ called the $c$-Cambrian congruence \cite{ReadingCambrian}. Since $c^{-1}$ is also a Coxeter element, one can also consider the $c^{-1}$-Cambrian congruence. The \dfn{$c$-biCambrian congruence} is the lattice congruence on the weak order of $W$ obtained by taking the common refinement of the $c$-Cambrian and $c^{-1}$-Cambrian congruences. The \dfn{$c$-biCambrian lattice} is the quotient of the weak order modulo the $c$-biCambrian congruence. Being a quotient of the weak order, the $c$-biCambrian lattice is semidistributive, so it comes equipped with a rowmotion operator.

Let $\Phi$ be a finite root system of coincidental type with a corresponding Coxeter group $W$ and corresponding doubled root poset $P$. Because the Coxeter diagram of $W$ is a tree, it has a bipartition $X\sqcup Y$. Let $c_+=\prod_{s\in X}s$ and $c_-=\prod_{s\in Y}s$. Then $c=c_+c_-$ is called a \dfn{bipartite Coxeter element}; its inverse is $c_-c_+$, which is also bipartite. Our goal in the next three sections is to prove the following theorem, which Thomas and Williams stated as a conjecture in \cite{ThomasWilliams}.

\begin{theorem}\label{Conj:biCamb}
Let $W$ be a Coxeter group of coincidental type, and let $c$ be a bipartite Coxeter element of $W$. Let $P$ be the doubled root poset of type $W$. The orbit structure of rowmotion on the $c$-biCambrian lattice is the same as the orbit structure of rowmotion on $\mathcal J(P)$.
\end{theorem}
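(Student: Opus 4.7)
The plan is to prove the theorem by case analysis on the type of $W$, since the coincidental-type hypothesis leaves only the four families $A_n$, $B_n$, $H_3$, and $I_2(m)$, each of which I would treat individually. In every case, the goal is to construct an equivariant bijection $\Phi$ from the $c$-biCambrian lattice to $\mathcal J(P)$ that intertwines rowmotion on the two sides; equality of orbit structures then follows immediately.

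For the two sporadic families ($H_3$ and $I_2(m)$) the lattices in question are small enough that the orbit structure can essentially be read off: for $I_2(m)$ the biCambrian lattice and $\mathcal J(P)$ are very structured posets with only a handful of elements and obvious cyclic symmetries, and a direct case-check (or short inductive argument in $m$) finishes the task; for $H_3$, a finite (possibly computer-assisted) enumeration of the biCambrian congruence classes and an explicit matching with order ideals of the $H_3$ doubled root poset should suffice, since both are minuscule posets of modest size.

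For the infinite families, my approach would proceed in four steps. First, describe the elements of the $c$-biCambrian lattice combinatorially: since a biCambrian congruence class is a common refinement of the $c$- and $c^{-1}$-Cambrian classes, its elements should be labeled by pairs of Cambrian-type objects (e.g.\ pairs of triangulations, or pairs of noncrossing partitions, tied together by a compatibility condition). Second, describe rowmotion on these objects explicitly; here one uses the fact that the biCambrian lattice is semidistributive, and one identifies its join-irreducibles and the Kreweras map $\kappa$ via Lemma~\ref{lem:propertiesofkappa}, arguing that $\kappa$ amounts to a visible combinatorial involution (e.g.\ a rotation) on the model. Third, define the bijection $\Phi$ to $\mathcal J(P)$: in type $A$ one sends the pair-object to the order ideal of $[n]\times[n]$ whose north and east boundaries record the two components separately; in type $B$ one either adapts the type-$A$ bijection equivariantly with respect to the folding involution (whose fixed points give the shifted staircase) or constructs a direct analog using bipartite-$B$ combinatorics. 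Fourth, verify the intertwining $\Phi\circ \row=\row\circ \Phi$ by comparing the explicit dynamics on both sides; on the $\mathcal J(P)$ side this is governed by Stanley's promotion/rowmotion correspondence for rectangular and staircase shapes and by Theorem~\ref{thm:RushShi}, while on the biCambrian side it is the rotation identified in step two.

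The main obstacle I anticipate is step two: rowmotion on the biCambrian lattice is \emph{not} the projection of rowmotion from the weak order along the quotient map, so the join-irreducibles and the $\kappa$ map must be identified from scratch in a form that matches the combinatorial model. The biCambrian case is genuinely harder than the Cambrian case because each join-irreducible of a biCambrian lattice records information about both the $c$- and the $c^{-1}$-Cambrian congruence simultaneously, and tracking this interaction is where most of the technical work will live. Once a clean description of $\kappa$ is in hand, however, the bijection to $\mathcal J(P)$ should be forced by the matching between join-irreducibles and poset elements, and the equivariance verification should reduce to a direct comparison of two explicit cyclic actions.
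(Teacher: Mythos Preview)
Your architecture matches the paper's exactly: case-by-case on coincidental types, direct checks for $I_2(m)$ and (computer-assisted) $H_3$, and deducing type~$B$ from type~$A$ by folding. Where you diverge is the type-$A$ combinatorial model. You propose to encode biCambrian elements as ``compatible pairs of Cambrian-type objects'' and then read $\kappa$ off that picture; the paper instead uses Barnard--Reading's ready-made bijection $\xi_\downarrow\colon\pi_\downarrow(S_{n+1})\to\Omega_n=\{(S,T):S,T\subseteq[n],\ |S|=|T|\}$, which records for each descent a pair of integers depending on whether the descent is ``right-even'' or ``left-even.'' The technical heart of the paper (Theorem~\ref{thm:STupintermsofSTdown} and Theorem~\ref{thm:chi}) is not a direct computation of $\kappa$ but a sequence of lemmas about valid swaps in the biCambrian congruence, showing that $S_\downarrow,T_\downarrow$ of $w$ determine $S^\uparrow,T^\uparrow$ of $\pi^\uparrow(w)$ via complementation-plus-shift; this yields that rowmotion corresponds to the elementary map $\chi(S,T)=([n]\setminus c_+'T,\,[n]\setminus c_-'S)$ on $\Omega_n$, which is then matched to cyclic shift on Stanley--Thomas words. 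Your route would require building the ``pair of Cambrian objects'' model and its $\kappa$ description from scratch, whereas the paper's route cashes in on existing Barnard--Reading machinery at the cost of some delicate ascent/descent casework. For type~$B$, the paper's folding is a bit slicker than what you sketch: rather than matching join-irreducibles across the fold, it uses the pop-stack characterization of rowmotion (Proposition~\ref{prop:pop1}) to show directly that $\row_A$ restricted to $\pi_\downarrow^B(B_n)$ equals $\row_B$.
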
 

Let $W$, $c$ and $P$ be as in \cref{Conj:biCamb}, and let $F_P(q)$ be the polynomial defined in \cref{thm:RushShi}. \cref{thm:RushShi,Conj:biCamb} imply that rowmotion acting on the $c$-biCambrian lattice exhibits the cyclic sieving phenomenon with respect to $F_P(q)$.

\section{Type $A$ BiCambrian Lattices}\label{Sec:BiCambrianA}
Our goal in this section is to prove \cref{Conj:biCamb} in type $A$. The Coxeter group of type $A_n$ is isomorphic to the symmetric group $S_{n+1}$, whose elements are the permutations of the set $[n+1]$. We view permutations both as bijections from $[n+1]$ to itself and as words in one-line notation. Let $s_i$ denote the transposition that swaps $i$ and $i+1$. An index $i\in[n]$ is called a \dfn{descent} (respectively, \dfn{ascent}) of a permutation $w$ if $w(i)>w(i+1)$ (respectively, $w(i)<w(i+1)$); abusing notation, we will often say that $w(i)>w(i+1)$ is the descent (respectively, that $w(i)<w(i+1)$ is the ascent). For example, we would say that $6>3$ is a descent of $416352$, while $3<5$ is an ascent of $416352$. A permutation in $S_{n+1}$ is join-irreducible (respectively, meet-irreducible) in the weak order if and only if it has exactly $1$ descent (respectively, ascent). 

An \dfn{inversion} of a permutation $w\in S_{n+1}$ is a pair $(i,j)$ such that $1\leq i<j\leq n+1$ and $w^{-1}(i)>w^{-1}(j)$. For $w,w'\in S_{n+1}$, we have $w\leq w'$ if and only if every inversion of $w$ is also an inversion of $w'$. 

Let $c=c_+c_-$, where $c_+=\prod_{i\in[n]\text{ even}}s_i$ and $c_-=\prod_{i\in[n]\text{ odd}}s_i$. Throughout this section, we let $\equiv$ denote the $c$-biCambrian congruence on $S_{n+1}$. We will work with the following explicit combinatorial description of $\equiv$. Suppose $w\in S_{n+1}$ has a descent $a>b$. If there exist integers $k,m$ of different parities such that $b<k<m<a$ and such that $k$ and $m$ appear on the same side of $a$ and $b$ in the one-line notation of $w$, then we can swap $a$ and $b$ to produce a new permutation $w'$ such that $w'\lessdot w$ in the weak order and such that $w\equiv w'$. We call this operation that swaps $a$ and $b$ a \dfn{valid swap}. We also use the term \emph{valid swap} to refer to the inverse operation that swaps $a$ and $b$ to go from $w'$ to $w$ (in this case, $k$ and $m$ still have opposite parities and lie on the same side of $a$ and $b$ in $w'$). It is straightforward to check that a valid swap can be performed on a permutation $v$ by swapping two numbers $a$ and $b$ with $a<b$ if and only if the following hold: 
\begin{itemize} 
\item The entries $a$ and $b$ form a descent or an ascent in $v$.
\item There exists an integer $k$ such that $a<k<k+1<b$ and such that $k$ and $k+1$ appear on the same side of $a$ and $b$ in $v$.
\end{itemize} 
The \dfn{$c$-biCambrian congruence} $\equiv$ is defined by saying $v\equiv v'$ if and only if there is a sequence of valid swaps that transforms $v$ into $v'$; see \cref{FigRowTam5}. This equivalence relation is in fact a lattice congruence, so we can consider the associated projection operators $\pi_\downarrow$ and $\pi^\uparrow$ defined in \cref{Subsec:Congruences}. The \dfn{$c$-biCambrian lattice} is the lattice quotient of the weak order on $S_{n+1}$ modulo $\equiv$. As discussed in \cref{Subsec:Congruences}, the $c$-biCambrian lattice is isomorphic to both $\pi_\downarrow(S_{n+1})$ and $\pi^\uparrow(S_{n+1})$. Note that a permutation $w$ is in $\pi_\downarrow(S_{n+1})$ if and only if there does not exist a descent $b>a$ of $w$ for which we can perform a valid swap. Similarly, $w$ is in $\pi^\uparrow(S_{n+1})$ if and only if there does not exist an ascent $a<b$ of $w$ for which we can perform a valid swap.

\begin{figure}[ht]
  \begin{center}\includegraphics[height=6.5cm]{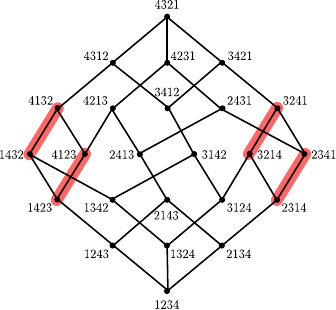}
  \end{center}
  \caption{There are $20$ $c$-biCambrian congruence classes in $S_4$; 16 of them are singletons while the other 4 (shown in red) have cardinality $2$.}\label{FigRowTam5}
\end{figure}

Consider a permutation $w$. Suppose $a$ and $b$ are integers that appear consecutively in the one-line notation of $w$ and satisfy $a<b$. Then either $b>a$ is a descent of $w$ or $a<b$ is an ascent of $w$. We say this descent or ascent is \dfn{right-even} (respectively, \dfn{left-even}) if $a$ and $b$ appear to the right (respectively, left) of all even numbers in $[a+1,b-1]$ and to the left (respectively, right) of all odd numbers in $[a+1,b-1]$ in $w$. Note that $b-a=1$ if and only if this descent or ascent is both right-even and left-even. It follows from the combinatorial description of the $c$-biCambrian congruence that $w\in\pi_\downarrow(S_{n+1})$ if and only if every descent of $w$ is right-even or left-even (or both).

Suppose $w\in\pi_\downarrow(S_{n+1})$ has $k$ descents $b_1>a_1,\ldots,b_k>a_k$; we will define two $k$-element subsets $S_\downarrow(w),T_\downarrow(w)\subseteq[n]$. If the descent $b_i>a_i$ is right-even, then we put $a_i$ into the set $S_\downarrow(w)$ and $b_i-1$ into the set $T_\downarrow(w)$; if the descent $b_i>a_i$ is left-even, we put $b_i-1$ into $S_\downarrow(w)$ and $a_i$ into $T_\downarrow(w)$. For example, if $w=896753421\in\pi_\downarrow(S_9)$, then $S_\downarrow(w)=\{1,3,4,5,6\}$ and $T_\downarrow(w)=\{1,2,3,6,8\}$. It is straightforward to check that no number will be added twice into one of these two sets from two different descents---thus, $(S_\downarrow(w),T_\downarrow(w))$ is indeed a pair of $k$-element subsets of $[n]$. Let $\Omega_n$ denote the set of all pairs $(S,T)$ such that $S,T\subseteq[n]$ and $|S|=|T|$. Barnard and Reading proved the following result.

\begin{theorem}[{\cite[Theorem~3.8]{BarnardReading}}]\label{thm:STbijectionpidown}
The map $\xi_\downarrow\colon\pi_\downarrow(S_{n+1})\to\Omega_n$ given by $\xi_\downarrow(w)=(S_\downarrow(w),T_\downarrow(w))$ is a bijection. 
\end{theorem}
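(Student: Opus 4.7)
The plan is to construct an explicit inverse to $\xi_\downarrow$ through a natural pairing of $S$ and $T$, and to verify well-definedness, injectivity, and surjectivity along the way. Given $(S, T) \in \Omega_n$ with $|S| = |T| = k$ and $S = \{s_1 < \cdots < s_k\}$, $T = \{t_1 < \cdots < t_k\}$, I would pair them in sorted order as $(s_i, t_i)$ and interpret each pair as a descent: if $s_i \leq t_i$, as a right-even descent with endpoints $(a_i, b_i) = (s_i, t_i + 1)$; if $s_i > t_i$, as a left-even descent with endpoints $(a_i, b_i) = (t_i, s_i + 1)$. The candidate permutation $w$ is then built by placing each $b_i$ immediately before its $a_i$ in the one-line notation, arranging the integers in the open interval $(a_i, b_i)$ on the appropriate side according to parity (evens left and odds right for right-even, reversed for left-even), and inserting the remaining integers in the unique way that introduces no additional descent.

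The first step is to verify that $\xi_\downarrow$ is well-defined: for $w \in \pi_\downarrow(S_{n+1})$ with descents $b_1 > a_1, \ldots, b_k > a_k$, every descent is right-even or left-even (otherwise a valid swap would exist, contradicting $w \in \pi_\downarrow$), and the $k$ values placed into $S_\downarrow(w)$ and into $T_\downarrow(w)$ are distinct. The distinctness is proved by contradiction: if two descents contribute the same value, the defining right-even and left-even configurations can be combined to produce integers of different parity lying on a common side of one of the two descents, yielding a valid swap. Injectivity of $\xi_\downarrow$ then follows because $(S, T)$ simultaneously encodes the set of descent endpoint-pairs and the right-even/left-even type of each descent, and the type forces the placement of all interior integers; so $w$ is reconstructed uniquely from its image.

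The main obstacle I anticipate is the compatibility of the parity constraints across descents whose intervals overlap or nest: two descents $(a_i, b_i)$ and $(a_j, b_j)$ with common interior integers impose simultaneous left/right placement rules, and one has to show that the sorted pairing is exactly the pairing under which these rules are consistent and yield no extra descents. I would handle this by induction on $k$, peeling off the descent corresponding to $(s_k, t_k)$ (or an innermost descent whose interval contains no other $a_j, b_j$) and reducing to a smaller instance of the statement after a suitable relabeling. An alternative route, which sidesteps the explicit inverse, is to combine well-definedness and injectivity of $\xi_\downarrow$ with a direct enumeration giving $|\pi_\downarrow(S_{n+1})| = \binom{2n}{n} = |\Omega_n|$, for example via a recursion stratifying $\pi_\downarrow(S_{n+1})$ by the position of $n+1$ or by the finest non-trivial class of the $c$-biCambrian congruence, and then conclude bijectivity by cardinality.
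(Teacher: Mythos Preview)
The paper does not prove this statement; it is quoted verbatim as \cite[Theorem~3.8]{BarnardReading} and used as a black box. So there is no ``paper's own proof'' to compare against, and your proposal is an attempt to supply an argument the paper deliberately outsources.

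That said, your proposed inverse has a concrete error: the sorted pairing $(s_i,t_i)$ does \emph{not} in general recover the descent pairs. Take $n=5$ and $w=426135\in\pi_\downarrow(S_6)$. Its descents are $4>2$ and $6>1$, both right-even, contributing the pairs $(2,3)$ and $(1,5)$ to $(S_\downarrow,T_\downarrow)$; hence $S_\downarrow(w)=\{1,2\}$ and $T_\downarrow(w)=\{3,5\}$. Your sorted pairing gives $(1,3)$ and $(2,5)$, which you would decode as right-even descents $4>1$ and $6>2$. But those two right-even conditions are incompatible: $4>1$ right-even forces $2$ to the left of $4,1$, while $6>2$ right-even forces $4$ to the left of $6,2$; since $6$ must sit immediately left of $2$, this puts $6,2$ left of $4,1$ and simultaneously $4$ left of $6,2$, a contradiction. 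So your inverse is not well-defined on this input, and your injectivity argument---which assumes the descent endpoint-pairs can be read off from $(S,T)$ via sorting---breaks as well. The correct pairing in this example is the \emph{nested} one, $(1,5)\supset(2,3)$, suggesting that a non-crossing (rather than sorted) matching of $S$ against $T$ is what is needed; but making that precise, and showing it always yields a consistent permutation in $\pi_\downarrow(S_{n+1})$, is exactly the substantial part of Barnard and Reading's argument.

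Your fallback of proving injectivity directly and then matching cardinalities is more promising, but note that establishing $|\pi_\downarrow(S_{n+1})|=\binom{2n}{n}$ independently is itself the content of the biCatalan enumeration in \cite{BarnardReading}, so this route does not really avoid the external reference.
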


Our general strategy for proving \cref{Conj:biCamb} for $A_n$ will be to use the bijection $\xi_\downarrow$ to transfer the operator $\row\colon\pi_\downarrow(S_{n+1})\to\pi_{\downarrow}(S_{n+1})$ to an operator $\chi\colon\Omega_n\to\Omega_n$ that is much easier to analyze (see \cref{thm:chi}). However, the main challenge is to prove that the map $\xi_\downarrow$ has the properties we need; this will occupy most of our focus in this section.  

Before we proceed, let us record an analogue of \cref{thm:STbijectionpidown} that concerns $\pi^\uparrow(S_{n+1})$. Given $w\in \pi^\uparrow(S_{n+1})$ with $k$ ascents $a_1<b_1, \ldots , a_k<b_k$, we define two $k$-element subsets $S^\uparrow(w),T^\uparrow(w)\subseteq[n]$. If the ascent $a_i<b_i$ is right-even, we put $a_i$ into the set $S^\uparrow(w)$ and $b_i-1$ into the set $T^\uparrow(w)$; if the ascent $a_i<b_i$ is left-even, we put $b_i-1$ into $S^\uparrow(w)$ and $a_i$ into $T^\uparrow(w)$. The next theorem follows from \cref{thm:STbijectionpidown} by considering duals, so we will attribute it to \cite{BarnardReading} even though it is not stated in this exact form in that article. 

\begin{theorem}[{\cite{BarnardReading}}]
The map $\xi^\uparrow\colon\pi^\uparrow(S_{n+1})\to\Omega_n$ given by $\xi^\uparrow(w)=(S^\uparrow(w),T^\uparrow(w))$ is a bijection. 
\end{theorem}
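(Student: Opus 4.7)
The plan is to deduce the theorem from Theorem~\ref{thm:STbijectionpidown} via an order-reversing involution of the weak order on $S_{n+1}$ that preserves the $c$-biCambrian congruence. I would take $\psi\colon S_{n+1}\to S_{n+1}$ to be $\psi(w)=ww_0$; in one-line notation, this reverses $w$, and it is a standard anti-automorphism of the right weak order. Under $\psi$, an ascent $a<b$ of $w$ at positions $(i,i+1)$ becomes a descent $b>a$ of $\psi(w)$ at positions $(n+1-i,n+2-i)$, so the values $a,b$ are preserved and ``to the right of $a,b$'' in $w$ becomes ``to the left of $a,b$'' in $\psi(w)$.

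The first step is to show that $\psi$ preserves the $c$-biCambrian congruence $\equiv$. Using the combinatorial description recalled before Figure~\ref{FigRowTam5}, $\equiv$ is generated by valid swaps, and the valid-swap criterion (adjacent entries $a$ and $b$ with $a<b$, together with an integer $k$ satisfying $a<k<k+1<b$ such that $k$ and $k+1$ lie on the same side of $\{a,b\}$) is manifestly invariant under the reversal $\psi$: the entries $a,b$ remain adjacent, the parity conditions on $k$ and $k+1$ are intrinsic to the values, and ``same side'' is preserved (with left and right interchanged). Since $\psi$ is order-reversing, it therefore sends each congruence class $[C_{\min},C_{\max}]$ to the congruence class $[\psi(C_{\max}),\psi(C_{\min})]$, so $\psi$ restricts to a bijection $\pi^\uparrow(S_{n+1})\to\pi_\downarrow(S_{n+1})$.

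Next I would verify the key correspondence of labels. Because reversing $w$ swaps the words ``right'' and ``left'' while preserving parities of integers, an ascent of $w$ is right-even (respectively, left-even) if and only if the corresponding descent of $\psi(w)$ is left-even (respectively, right-even). Comparing this with the labeling conventions defining $\xi^\uparrow$ and $\xi_\downarrow$, it follows that $S^\uparrow(w)=T_\downarrow(\psi(w))$ and $T^\uparrow(w)=S_\downarrow(\psi(w))$ for every $w\in\pi^\uparrow(S_{n+1})$. Equivalently, $\xi^\uparrow=\sigma\circ\xi_\downarrow\circ\psi|_{\pi^\uparrow(S_{n+1})}$, where $\sigma\colon\Omega_n\to\Omega_n$ is the involution $(S,T)\mapsto(T,S)$. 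Since $\xi_\downarrow$ is a bijection by Theorem~\ref{thm:STbijectionpidown}, $\psi|_{\pi^\uparrow(S_{n+1})}$ is a bijection onto $\pi_\downarrow(S_{n+1})$, and $\sigma$ is clearly a bijection, the composition $\xi^\uparrow$ is a bijection as required. The primary obstacle in this approach is tracking the right-even/left-even swap under $\psi$, though this is ultimately a direct unpacking of the definitions.
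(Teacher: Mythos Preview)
Your proposal is correct and is essentially the same approach as the paper: the paper simply says the result ``follows from Theorem~\ref{thm:STbijectionpidown} by considering duals,'' and your involution $\psi(w)=ww_0$ is precisely the anti-automorphism of the right weak order that realizes this self-duality. You have carried out in detail what the paper leaves implicit---checking that reversal preserves valid swaps (hence the $c$-biCambrian congruence), that it interchanges $\pi^\uparrow$ with $\pi_\downarrow$, and that it swaps right-even with left-even so that $(S^\uparrow,T^\uparrow)$ matches $(T_\downarrow,S_\downarrow)\circ\psi$.
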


The next proposition tells us that an element $w\in\pi_\downarrow(S_{n+1})$ is join-irreducible if and only if $|S_\downarrow(w)|=|T_\downarrow(w)|=1$. The first statement is an immediate consequence of Proposition~4.11 (and the paragraph immediately after) in \cite{BarnardReading}, while the second statement follows from the dual version of that result. 

\begin{proposition}[{\cite[Proposition~4.11]{BarnardReading}}]\label{prop:1descent}
A permutation in $\pi_\downarrow(S_{n+1})$ is join-irreducible in $\pi_\downarrow(S_{n+1})$ if and only if it has exactly $1$ descent (i.e., it is join-irreducible in the weak order on $S_{n+1}$). A permutation in $\pi^\uparrow(S_{n+1})$ is meet-irreducible in $\pi^\uparrow(S_{n+1})$ if and only if it has exactly $1$ ascent (i.e., it is meet-irreducible in the weak order on $S_{n+1}$).
\end{proposition}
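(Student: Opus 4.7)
The plan is to prove the first statement, about join-irreducibles of $\pi_\downarrow(S_{n+1})$; the second statement about meet-irreducibles of $\pi^\uparrow(S_{n+1})$ then follows by the dual argument, since the congruence $\equiv$ is symmetric in $c$ and $c^{-1}$ and reversing the one-line notation of a permutation swaps $\pi_\downarrow$ with $\pi^\uparrow$ and ascents with descents.

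For the easy direction, suppose $w \in \pi_\downarrow(S_{n+1})$ has exactly one descent, at position $i$. In the weak order $w$ covers only $w s_i$, and because $w \in \pi_\downarrow$ its unique descent is unswappable, so $w \not\equiv w s_i$. Any $u \in \pi_\downarrow$ with $u < w$ in $\pi_\downarrow$ satisfies $u < w$ in the weak order, hence $u \leq w s_i$ by the cover property. Applying the order-preserving projection to $L/\equiv$ and taking minima yields $u \leq \pi_\downarrow(w s_i)$ in $\pi_\downarrow$. Thus $\pi_\downarrow(w s_i)$ is the unique element covered by $w$ in $\pi_\downarrow$, so $w$ is join-irreducible.

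For the converse, I would argue the contrapositive: suppose $w \in \pi_\downarrow$ has descents at positions $i_1 < \cdots < i_k$ with $k \geq 2$. I invoke the standard lemma that a cover relation $v \lessdot v'$ in a finite lattice $L$ with $v \not\equiv v'$ descends to a cover relation $[v] \lessdot [v']$ in $L/\equiv$ (the diamond argument: any candidate $[u]$ with $[v] \leq [u] \leq [v']$ and representative $u_0 \in [u]$ gives $v' \wedge (v \vee u_0) \in \{v, v'\}$, forcing $[u] \in \{[v], [v']\}$). Combined with the fact that the unswappability of each descent of $w$ gives $w \not\equiv w s_{i_j}$, this yields that each $\pi_\downarrow(w s_{i_j})$ is a cover of $w$ in $\pi_\downarrow$.

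The main obstacle is to show these $k$ covers are pairwise distinct, equivalently $w s_{i_j} \not\equiv w s_{i_{j'}}$ for $j \neq j'$. For this I would use an invariant: for each pair $a < b$ of distinct values in $[n + 1]$, let $\phi_{a, b}(v) \in \{0, 1\}$ indicate whether $b$ precedes $a$ in the one-line notation of $v$. A single valid swap acting on an adjacent pair $\{a', b'\}$ toggles exactly $\phi_{a', b'}$ and leaves every other $\phi_{a, b}$ fixed. Writing $(b_j, a_j) = (w(i_j), w(i_j + 1))$, a short check shows that $w s_{i_j}$ and $w s_{i_{j'}}$ disagree on both $\phi_{a_j, b_j}$ and $\phi_{a_{j'}, b_{j'}}$, so any valid-swap sequence connecting them must apply, at some intermediate permutation $v$, a valid swap to the adjacent pair $\{a_j, b_j\}$. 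I would close the argument by showing no such $v$ exists: the alternating arrangement of values in $[a_j + 1, b_j - 1]$ on the two sides of $\{a_j, b_j\}$ in $w$, which makes the descent at $i_j$ unswappable, is preserved under every valid swap whose swapped pair is disjoint from $\{a_j, b_j\}$. Verifying this preservation by a short case analysis, parallel to the combinatorics underlying Theorem~\ref{thm:STbijectionpidown}, is the technical core of the proof.
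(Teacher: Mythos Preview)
The paper does not give its own proof of this proposition; it simply cites \cite[Proposition~4.11]{BarnardReading} (and its dual). Your attempt to supply a self-contained argument is therefore a different route by design, and the easy direction you give is correct.

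In the hard direction, however, there is a genuine gap. You correctly observe that any valid-swap path from $ws_{i_j}$ to $ws_{i_{j'}}$ must at some step swap exactly the pair $\{a_j,b_j\}$, and you propose to rule this out by arguing that every earlier swap, having swapped pair \emph{disjoint} from $\{a_j,b_j\}$, preserves the right-even/left-even arrangement around that pair. But ``swapped pair $\neq\{a_j,b_j\}$'' is strictly weaker than ``swapped pair disjoint from $\{a_j,b_j\}$'': an earlier step may validly swap $a_j$ (or $b_j$) with some third value $c$. Such a swap does not toggle $\phi_{a_j,b_j}$, yet it destroys the adjacency of $a_j$ and $b_j$, after which ``the two sides of $\{a_j,b_j\}$'' is no longer well-defined and your preservation claim does not apply. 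The promised case analysis cannot close the argument as stated.

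There is a one-line lattice-theoretic fix that bypasses all of this combinatorics. Since $y_j:=ws_{i_j}$ and $y_{j'}:=ws_{i_{j'}}$ are distinct elements covered by $w$ in the weak order, we have $y_j\vee y_{j'}=w$. If $y_j\equiv y_{j'}$, then because $\equiv$ is a lattice congruence,
\[
w=y_j\vee y_{j'}\equiv y_{j'}\vee y_{j'}=y_{j'},
\]
contradicting $w\not\equiv y_{j'}$ (which you already established from $w\in\pi_\downarrow(S_{n+1})$). Hence the classes $[ws_{i_1}],\ldots,[ws_{i_k}]$ are pairwise distinct, and $w$ has $k\geq 2$ lower covers in $\pi_\downarrow(S_{n+1})$. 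This is in fact a general phenomenon: for any lattice congruence, an element that is minimal in its class has the same down-degree in the quotient as in the original lattice.
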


Suppose $w\in S_{n+1}$, and let $b>a$ be a descent of $w$. Let $\lambda(w,b>a)$ be the unique permutation in $S_{n+1}$ whose only descent is $b>a$ and such that the set of numbers in $[a+1,b-1]$ appearing to the left of $b$ in $\lambda(w,b>a)$ is the same as the set of numbers in $[a+1,b-1]$ appearing to the left of $b$ in $w$. For example, $\lambda(31746285,6>2)=13462578$. Note that if $b>a$ is right-even (respectively, left-even) in $w$, then $b>a$ is also right-even (respectively, left-even) in $\lambda(w,b>a)$. It is well known that if $b_1>a_1,\ldots,b_k>a_k$ are the descents of $w$, then $\{\lambda(w,b_1>a_1),\ldots,\lambda(w,b_k>a_k)\}$ is the canonical join representation of $w$ in the weak order (see \cref{subsec:distributive}). Proposition~4.11 (and the paragraph immediately after it) in \cite{BarnardReading} tells us that if $w\in\pi_\downarrow(S_{n+1})$, then the canonical join representation of $w$ in $\pi_\downarrow(S_{n+1})$ is the same as the canonical join representation of $w$ in $S_{n+1}$. We record this in the following proposition. 

\begin{proposition}\label{prop:canonicaljoinpidown}
Let $w\in\pi_\downarrow(S_{n+1})$. Let $b_1>a_1,\ldots,b_k>a_k$ be the descents of $w$. Then $\{\lambda(w,b_1>a_1),\ldots,\lambda(w,b_k>a_k)\}$ is the canonical join representation of $w$ in $\pi_\downarrow(S_{n+1})$.
\end{proposition}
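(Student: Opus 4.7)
The plan is to reduce the claim to the classical canonical join representation in the weak order on $S_{n+1}$ and then transport it across the $c$-biCambrian quotient. It is well known that in the weak order, the canonical join representation of $w$ is exactly $\{\lambda(w, b_1 > a_1), \ldots, \lambda(w, b_k > a_k)\}$. The guiding principle (valid for any lattice congruence on a semidistributive lattice) is that if every element of the canonical join representation in the ambient lattice is already fixed by the projection $\pi_\downarrow$, then that same set constitutes the canonical join representation in the quotient. So the substance of the proof is verifying this fixation and then ensuring that covers match up correctly.

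The first main step is to show that $\lambda(w, b > a) \in \pi_\downarrow(S_{n+1})$ for every descent $b > a$ of $w$. Because $w \in \pi_\downarrow(S_{n+1})$, the descent $b > a$ is right-even or left-even in $w$. By construction, $\lambda(w, b > a)$ preserves the partition of the values in $[a+1, b-1]$ into ``left of $b$'' and ``right of $b$,'' so the descent $b > a$ inherits the right-even or left-even property in $\lambda(w, b > a)$. Since this permutation has only the one descent, I need only rule out a valid swap there; but a valid swap demands a consecutive pair $k < k+1$ in $(a, b)$ of opposite parities on a common side of $\{a, b\}$, and this is precisely what the right-even/left-even condition forbids (all even values in $(a,b)$ lie on one side and all odd values on the other). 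The images $\lambda(w, b_i > a_i)$ are pairwise distinct since they have distinct unique descents, and each is join-irreducible in $\pi_\downarrow(S_{n+1})$ by Proposition~\ref{prop:1descent}.

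The final step is to assemble these pieces. Joins in $\pi_\downarrow(S_{n+1})$, viewed as the quotient lattice $S_{n+1}/{\equiv}$, are computed by applying $\pi_\downarrow$ to joins in $S_{n+1}$, so $\bigvee \lambda(w, b_i > a_i) = \pi_\downarrow(w) = w$ in the quotient. To upgrade this to \emph{the} canonical join representation rather than merely \emph{a} join representation into irreducibles, I would invoke Proposition~4.11 of \cite{BarnardReading}, which identifies the covers of $w$ in $\pi_\downarrow(S_{n+1})$ with the descents of $w$. Since in a semidistributive lattice the canonical join representation is indexed by the covers below $w$, the matching cardinalities force our candidate set to be the canonical join representation. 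The main obstacle I anticipate lies in this final cover-count: one must verify that descending a cover $v \lessdot w$ from $S_{n+1}$ to $\pi_\downarrow(S_{n+1})$ neither collapses two distinct descent-covers nor introduces spurious ones. The parity analysis in the first step and the join computation via $\pi_\downarrow$ in the second are comparatively routine; the delicate piece is the identification of cover relations that underlies Proposition~4.11.
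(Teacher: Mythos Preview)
Your proposal is correct and follows essentially the same approach as the paper: both start from the well-known canonical join representation in the weak order on $S_{n+1}$ and then invoke \cite[Proposition~4.11]{BarnardReading} (and the paragraph following it) to transport it to $\pi_\downarrow(S_{n+1})$. The paper simply records the result as a direct consequence of that citation, whereas you supply the intermediate verifications (that each $\lambda(w,b>a)$ lies in $\pi_\downarrow(S_{n+1})$ and that their join in the quotient is $w$) explicitly; these are nice but ultimately subsumed by the same reference. Your caveat about the final cover-count is well placed: the ``matching cardinalities'' heuristic alone does not pin down the canonical join representation in a general semidistributive lattice, and it is precisely the content of the Barnard--Reading result that the labels on covers below $w$ agree in $S_{n+1}$ and in $\pi_\downarrow(S_{n+1})$.
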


Note that if $w\in \pi_\downarrow(S_{n+1})$ has descents $b_1>a_1,\ldots , b_k>a_k$, then 
\begin{equation}\label{Eq:union}
S_\downarrow(w)=\bigcup_{i\in[k]}S_\downarrow(\lambda(w,b_i>a_i))\quad\text{and}\quad T_\downarrow(w)=\bigcup_{i\in[k]}T_\downarrow(\lambda(w, b_i>a_i)).
\end{equation}

If $w\in S_{n+1}$ has an ascent $a<b$, then we let $\tau(w,a<b)$ be the unique permutation in $S_{n+1}$ whose only ascent is $a<b$ and such that the set of numbers in $[a+1,b-1]$ appearing to the left of $a$ in $\tau(w,a<b)$ is the same as the set of numbers in $[a+1,b-1]$ appearing to the left of $a$ in $w$. We have the following analogue of \cref{prop:canonicaljoinpidown}.

\begin{proposition}\label{prop:canonicalmeetpiup}
Let $w\in\pi^\uparrow(S_{n+1})$. Let $a_1<b_1, \ldots , a_k<b_k$ be the ascents of $w$. Then \linebreak $\{\tau(w,a_1<b_1),\ldots,\tau(w,a_k<b_k)\}$ is the canonical meet representation of $w$ in $\pi^\uparrow(S_{n+1})$.
\end{proposition}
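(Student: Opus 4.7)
The plan is to deduce the proposition from its $\pi_\downarrow$ analogue (Proposition~\ref{prop:canonicaljoinpidown}) via the reversal anti-automorphism $\rho\colon S_{n+1}\to S_{n+1}$ that sends $(w_1,\ldots,w_{n+1})$ to $(w_{n+1},\ldots,w_1)$, i.e., right multiplication by the longest element $w_0$. This is a classical order-reversing involution of the right weak order on $S_{n+1}$, and it sends each descent $b>a$ of $w$ (viewed as an adjacent pair of values) to an ascent $a<b$ of $\rho(w)$ and vice versa.

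First I would check that $\rho$ preserves the $c$-biCambrian congruence using the explicit combinatorial description of valid swaps. A valid swap exchanges adjacent values $a<b$ provided some consecutive pair $k,k+1$ with $a<k<k+1<b$ lies entirely on one side of $\{a,b\}$. Reversal preserves adjacency of $a$ and $b$, and flips ``left'' with ``right'' globally, so the condition of lying on the same side is preserved; hence $\rho$ maps valid swaps to valid swaps and permutes the equivalence classes of $\equiv$. Being order-reversing, $\rho$ exchanges minimal and maximal elements in each class and therefore restricts to an anti-isomorphism of posets $\pi_\downarrow(S_{n+1})\to\pi^\uparrow(S_{n+1})$.

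Next I would verify the identity $\rho(\lambda(w,b>a))=\tau(\rho(w),a<b)$ for every descent $b>a$ of $w\in\pi_\downarrow(S_{n+1})$. Write $L\subseteq[a+1,b-1]$ for the set of entries of $w$ lying to the left of $b$. In $\lambda(w,b>a)$ the entry $a$ immediately follows $b$, so the elements of $[a+1,b-1]$ lying to the right of $a$ (equivalently, to the right of $b$) are exactly $[a+1,b-1]\setminus L$; reversing the one-line notation, this is precisely the set of elements of $[a+1,b-1]$ lying to the left of $a$ in $\rho(\lambda(w,b>a))$. In $\rho(w)$, the elements of $[a+1,b-1]$ to the left of $a$ are likewise $[a+1,b-1]\setminus L$, so $\tau(\rho(w),a<b)$ is characterized by the same prescribed data and has the same unique ascent, forcing the two permutations to agree.

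Combining these pieces, $\rho$ is an anti-isomorphism of lattices that must send the canonical join representation of $w$ in $\pi_\downarrow(S_{n+1})$ to the canonical meet representation of $\rho(w)$ in $\pi^\uparrow(S_{n+1})$, and it sends each join-irreducible summand $\lambda(w,b_i>a_i)$ to the meet-irreducible factor $\tau(\rho(w),a_i<b_i)$. Invoking Proposition~\ref{prop:canonicaljoinpidown} then yields the claim for $\rho(w)$, and since $\rho$ is a bijection between the two posets this establishes the proposition for every element of $\pi^\uparrow(S_{n+1})$. The main subtlety is the bookkeeping in the third paragraph: $\lambda$ is characterized by data left of $b$ whereas $\tau$ is characterized by data left of $a$, and these match correctly only because $a$ immediately follows $b$ in $\lambda$ while $a$ immediately precedes $b$ in $\tau$, so that ``left of $a$'' in $\tau$ and ``right of $b$'' in $\lambda$ describe the same subset of $[a+1,b-1]$.
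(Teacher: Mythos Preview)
Your proposal is correct and is essentially a careful spelling-out of the duality that the paper invokes implicitly: the paper states the proposition as ``the analogue of Proposition~\ref{prop:canonicaljoinpidown}'' without proof, and your reversal map $\rho=(-)w_0$ is exactly the anti-isomorphism that makes this duality precise. The bookkeeping in your third paragraph verifying $\rho(\lambda(w,b>a))=\tau(\rho(w),a<b)$ is accurate, and the remaining steps (that $\rho$ preserves the congruence via the valid-swap description, and that anti-isomorphisms exchange canonical join and meet representations) are standard.
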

 
Moreover, if $w\in \pi_\uparrow(S_{n+1})$ has ascents $a_1<b_1,\ldots , a_k<b_k$, then 
\begin{equation}\label{Eq:union2}
S^\uparrow(w)=\bigcup_{i\in[k]}S^\uparrow(\tau(w,a_i<b_i))\quad\text{and}\quad T^\uparrow(w)=\bigcup_{i\in[k]}T^\uparrow(\tau(w, a_i<b_i)).
\end{equation}

\begin{proposition}\label{cor:rowascentisdescent}
    Let $j$ be the join-irreducible element of $\pi_\downarrow(S_{n+1})$ with right-even (respectively, left-even) descent $b>a$. Let $m$ be the meet-irreducible element of $\pi^\uparrow(S_{n+1})$ with right-even (respectively, left-even) ascent $a<b$. Then $m=\pi^\uparrow(\row(j))$, where rowmotion is computed in $\pi_\downarrow(S_{n+1})$.
\end{proposition}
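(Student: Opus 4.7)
The plan is to show that $\row(j) = \tilde{m}$ in $\pi_{\downarrow}(S_{n+1})$, where $\tilde{m} := \pi_{\downarrow}(m)$; applying $\pi^{\uparrow}$ to both sides then gives the conclusion $\pi^{\uparrow}(\row(j)) = m$. Since $j$ is join-irreducible in the semidistributive lattice $\pi_{\downarrow}(S_{n+1})$, we have $\row(j) = \kappa(j)$, so it suffices to check $\kappa(j) = \tilde{m}$. Because $m$ is meet-irreducible in $\pi^{\uparrow}(S_{n+1})$ and the two lattices are canonically isomorphic, $\tilde{m}$ is meet-irreducible in $\pi_{\downarrow}(S_{n+1})$ and has a unique upper cover $\tilde{m}^{*}$. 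By Lemma~\ref{lem:propertiesofkappa}, it is enough to verify that the label of the cover $\tilde{m} \lessdot \tilde{m}^{*}$ in $\pi_{\downarrow}(S_{n+1})$ equals $j$.

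I will focus on the right-even case; the left-even case is symmetric. The one-line notation of $m$ is $n+1, \ldots, b+1, e_{p}, \ldots, e_{1}, a, b, o_{q}, \ldots, o_{1}, a-1, \ldots, 1$, where $e_{1} < \cdots < e_{p}$ and $o_{1} < \cdots < o_{q}$ are the even and odd integers in $[a+1, b-1]$. I would first locate the cover $\tilde{m} \lessdot \tilde{m}^{*}$ explicitly: in the weak order, swapping the unique ascent $a < b$ in $m$ produces a cover $m \lessdot m^{(1)}$, and because the evens and odds in $[a+1, b-1]$ sit on opposite sides of $\{a, b\}$ in $m$, this swap is not a valid swap, so $m^{(1)} \in [\tilde{m}^{*}]$. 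Next, I would show that along any sequence of valid descent swaps descending from $m^{(1)}$ to $\tilde{m}^{*} = \pi_{\downarrow}(m^{(1)})$, the values $a$ and $b$ remain adjacent (with $b$ first) and the evens-left / odds-right partition of $[a+1, b-1]$ relative to $\{a, b\}$ is preserved. The reason is that any valid swap must occur strictly on one side of the $\{a, b\}$ block: a swap straddling this block would have to involve $a$ or $b$ as one of the swapped values, which is ruled out because neither $a$ nor $b$ participates in any descent admitting a valid swap, again due to the opposite-side placement of evens and odds.

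From this persistence, $\tilde{m}^{*}$ has $b > a$ as a descent with exactly the values $\{e_{1}, \ldots, e_{p}\}$ in $[a+1, b-1]$ appearing to the left of $b$; hence by inspection $\lambda(\tilde{m}^{*}, b > a) = j$. To finish, I would verify that the lower cover of $\tilde{m}^{*}$ in $\pi_{\downarrow}(S_{n+1})$ corresponding to this descent (via Proposition~\ref{prop:canonicaljoinpidown}) is $\tilde{m}$: swapping $b$ and $a$ in $\tilde{m}^{*}$ produces an ascent, and once more by the opposite-side property this swap is not valid, so the resulting weak-order cover witnesses the quotient cover $[\tilde{m}] \lessdot [\tilde{m}^{*}]$ and projects under $\pi_{\downarrow}$ to $\tilde{m}$. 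This identifies $j$ as the label $j_{\tilde{m}, \tilde{m}^{*}}$, and therefore $\kappa(j) = \tilde{m}$. The main obstacle will be the persistence argument, which requires a careful case analysis of which descents of each intermediate permutation (obtained by a partial sequence of valid swaps starting from $m^{(1)}$) can admit valid swaps, and showing that none of them can disturb the adjacency of $b, a$ or the evens/odds partition around them.
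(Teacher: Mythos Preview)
Your plan is sound and would succeed, but it takes a considerably more laborious route than the paper. The paper never touches $\pi_\downarrow(m^{(1)})$ at all: it simply observes that in the \emph{weak order} on $S_{n+1}$ one has $j\wedge m=j_*$ and $j\vee m=m^*$ (an easy direct check from the explicit one-line forms of $j$ and $m$). These identities pass to the quotient $S_{n+1}/{\equiv}$, and then Lemma~\ref{lem:propertiesofkappa} immediately gives $\kappa([j]_\equiv)=[m]_\equiv$, hence $\row([j]_\equiv)=[m]_\equiv$; translating back to $\pi_\downarrow(S_{n+1})$ finishes.

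By contrast, you work entirely inside $\pi_\downarrow(S_{n+1})$ and try to identify the label of the cover $\tilde m\lessdot\tilde m^*$ by explicitly controlling $\tilde m^*=\pi_\downarrow(m^{(1)})$ through a sequence of valid swaps. This is doable, but the persistence argument is genuinely delicate: for instance, when $b$ is even the descent $b+1>e_p$ in $m^{(1)}$ is a valid swap (witnessed by $b-1,b$ on the right), and iterating such swaps drags $b+1$ all the way down to sit adjacent to $b$; symmetrically, when $a$ is odd, $a-1$ migrates leftward through the odd block to sit adjacent to $a$. One then has to argue that none of the newly created descents ever permit a valid swap involving $a$ or $b$ themselves. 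All of this bookkeeping is avoided by the paper's observation that meets and joins in the quotient can be computed upstairs in the weak order and then projected, so that the single verification $j\wedge m=j_*$, $j\vee m=m^*$ suffices.
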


\begin{proof}
For $x\in S_{n+1}$, let $[x]_\equiv$ be the $c$-biCambrian congruence class containing $x$; thus, the equivalence classes of the form $[x]_\equiv$ are the elements of the $c$-biCambrian lattice $S_{n+1}/\equiv$. Let $j_*$ (respectively, $m^*$) be the unique element of $S_{n+1}$ that is covered by $j$ (respectively, covers $m$) in the weak order. One can check that $j\wedge m=j_*$ and $j\vee m=m^*$, where the join and meet are taken in the weak order. This implies that $[j]_\equiv\wedge[m]_\equiv=[j_*]_\equiv$ and $[j]_\equiv\vee[m]_\equiv=[m^*]_\equiv$, where the join and meet are taken in the $c$-biCambrian lattice. It follows from \cref{lem:propertiesofkappa} that $\kappa([j]_\equiv)=[m]_\equiv$. This proves that $\row([j]_\equiv)=[m]_\equiv$ in $S_{n+1}/\equiv$ (recall from \cref{subsec:distributive} that $\row$ is just $\kappa$ when restricted to join-irreducible elements). It follows that in the lattice $\pi_\downarrow(S_{n+1})$, rowmotion sends $j$ to the minimum element of $[m]_\equiv$. This yields the desired result. 
\end{proof}

We still need to establish a few more technical lemmas before we can transfer the action of rowmotion from $\pi_\downarrow(S_{n+1})$ to $\Omega_n$ via the map $\xi_\downarrow$. 

\begin{lemma}\label{lem:cannotswapatmosttwo}
    Suppose that $w,w'\in S_{n+1}$ are in the same $c$-biCambrian congruence class and that ${d_1,d_2\in [n+1]}$ satisfy $1\leq |d_1-d_2|\le 2$. Then $d_1$ appears to the left of $d_2$ in $w$ if and only if $d_1$ appears to the left of $d_2$ in $w'$.
\end{lemma}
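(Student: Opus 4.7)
The plan is to reduce to a single valid swap and exploit the fact that the gap condition $a<k<k+1<b$ in the definition of a valid swap forces $b-a\ge 3$. In particular, the two entries involved in any valid swap differ by at least $3$, so nothing "close" can be directly swapped. Then I will induct on the length of the chain of valid swaps connecting $w$ to $w'$, reducing to showing the claim is preserved by a single valid swap.

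So suppose $w$ and $w'$ differ by exactly one valid swap, say this swap exchanges entries $a$ and $b$ with $a<b$, which occupy two consecutive positions $i$ and $i+1$ in the one-line notation of $w$. By the paragraph defining valid swaps in the excerpt, there exists $k$ with $a<k<k+1<b$, so $b-a\ge 3$. Fix $d_1,d_2\in[n+1]$ with $1\le|d_1-d_2|\le 2$; then $\{d_1,d_2\}\ne\{a,b\}$. I now split on how $\{d_1,d_2\}$ meets $\{a,b\}$.

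If $\{d_1,d_2\}\cap\{a,b\}=\emptyset$, then the positions of $d_1$ and $d_2$ are identical in $w$ and $w'$, so their relative order is unchanged. If $|\{d_1,d_2\}\cap\{a,b\}|=1$, then (by symmetry) we may assume $d_1\in\{a,b\}$ and $d_2\notin\{a,b\}$. Here $d_2$ sits in some fixed position $p\notin\{i,i+1\}$ in both $w$ and $w'$; in particular $p<i$ or $p>i+1$. Since $d_1$ lies in position $i$ in $w$ and position $i+1$ in $w'$ (or vice versa), the relation $p<i$ (respectively $p>i+1$) holds in $w$ if and only if it holds in $w'$, so $d_2$ appears to the left of $d_1$ in $w$ if and only if it does so in $w'$. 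This exhausts all cases and completes the one-step argument.

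The main obstacle, which is really just a careful sanity check rather than a deep difficulty, is making sure that the valid-swap definition indeed forces $b-a\ge 3$; once that is noted, everything else is bookkeeping about positions. The general case then follows by induction on the minimum number of valid swaps required to transform $w$ into $w'$, which is finite because $w\equiv w'$.
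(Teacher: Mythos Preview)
Your proof is correct and follows exactly the same idea as the paper's: the definition of a valid swap forces $b-a\ge 3$, so no valid swap can exchange two entries differing by at most $2$, and hence the relative order of $d_1$ and $d_2$ is preserved along any chain of valid swaps. The paper compresses this into a single sentence (``there is no valid swap of $d_1$ and $d_2$''), while you have carefully written out the position-based case analysis for a single swap and the induction on the length of the chain; both are the same argument.
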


\begin{proof}
    This is an immediate consequence of the combinatorial description of the $c$-biCambrian congruence since there is no valid swap of $d_1$ and $d_2$.
\end{proof}

In the following lemma, let us write $\Inv(w)\vert_{[a,b]}$ for the set of inversions $(x,y)$ of a permutation $w$ that satisfy $a=x<y\leq b$ or $a\leq x<y=b$.

\begin{lemma}\label{lem:relativeorderdoesnotchange}
    Let $w\in \pi_\downarrow(S_{n+1})$, and let $w'$ be a permutation in the same $c$-biCambrian congruence class as $w$. If $w$ has a descent $b>a$, then $\Inv(w)\vert_{[a,b]}=\Inv(w')\vert_{[a,b]}$. Analogously, if $\pi^\uparrow(w)$ has an ascent $d<e$, then $\Inv(\pi^\uparrow(w))\vert_{[d,e]}=\Inv(w')\vert_{[d,e]}$. 
\end{lemma}

\begin{proof}
    We will prove only the first statement since the proof of the second statement is similar. We will also assume that $b>a$ is a right-even descent of $w$; the case in which it is left-even is similar. Since $w\leq w'$, we know that $\Inv(w)\vert_{[a,b]}\subseteq\Inv(w')\vert_{[a,b]}$. Assume by way of contradiction that $\Inv(w)\vert_{[a,b]}\neq\Inv(w')\vert_{[a,b]}$. 
    
    Referring to the combinatorial description of the $c$-biCambrian congruence, we see that there is a chain $w=v_0\lessdot v_1\lessdot\cdots\lessdot v_m=w'$ in the weak order whose elements are all in the same $c$-biCambrian congruence class as $w$ and such that each $v_i$ is obtained from $v_{i-1}$ by applying a valid swap. Let $j$ be the smallest index such that $\Inv(v_{j-1})\vert_{[a,b]}\neq \Inv(v_{j})\vert_{[a,b]}$. Then $\Inv(v_{j-1})\vert_{[a,b]}= \Inv(w)\vert_{[a,b]}$. There is some $z\in[a+1,b-1]$ such that the valid swap used to move from $v_{j-1}$ to $v_j$ either swaps $a$ and $z$ or swaps $z$ and $b$; we will assume that it swaps $a$ and $z$ since the other case is similar. Because this swap is valid, there exists some $k\in[a+1,z-1]$ such that $k$ and $k+1$ appear on the same side of $a$ in $v_{j-1}$. However, since $\Inv(v_{j-1})\vert_{[a,b]}=\Inv(w)\vert_{[a,b]}$, this implies that $k$ and $k+1$ appear on the same side of $a$ in $w$. This contradicts the fact that $b>a$ is a right-even descent of~$w$. 
\end{proof}

\begin{lemma}\label{lem:cannotgofromdescenttoascent}
    Let $b$ be an even (respectively, odd) integer. Let $w\in \pi_\downarrow(S_{n+1})$ have a right-even (respectively, left-even) descent $b>a$. Then $\pi^\uparrow(w)$ cannot have a right-even (respectively, left-even) ascent of the form $b<d$.
\end{lemma}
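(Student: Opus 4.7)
My approach is by contradiction. Suppose $w\in\pi_\downarrow(S_{n+1})$ has a right-even descent $b>a$ with $b$ even, and suppose $\pi^\uparrow(w)$ has a right-even ascent $b<d$. The case of $b$ odd with left-even hypotheses is symmetric, using the standard duality that swaps right-even with left-even and exchanges descents with ascents (equivalently, $\pi_\downarrow$ with $\pi^\uparrow$).

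The first step is to unpack both hypotheses using Lemma~\ref{lem:relativeorderdoesnotchange}. Applied to the descent $b>a$ of $w$, it tells me the relative order of integers in $[a,b]$ is the same in $w$ and $\pi^\uparrow(w)$; combined with the right-even condition, $b$ sits immediately before $a$ in $w$, with the even elements of $[a+1,b-1]$ to the left of $b$ and the odd elements of $[a+1,b-1]$ to the right of $a$, and these constraints transfer to $\pi^\uparrow(w)$. Applied to the ascent $b<d$ of $\pi^\uparrow(w)$, it tells me the relative order of integers in $[b,d]$ is the same in both permutations; combined with the right-even ascent, in $\pi^\uparrow(w)$ (and, by transfer, in $w$) the even elements of $[b+1,d-1]$ sit to the left of $b$ and the odd elements sit to the right of $d$. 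Consequently $a$ sits strictly to the right of $d$ in $\pi^\uparrow(w)$, and $d$ sits strictly to the right of $a$ in $w$.

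The second step is to extract positional information about the odd elements $b\pm 1$ using Lemma~\ref{lem:cannotswapatmosttwo}. Since $b$ is even, both $b-1$ and $b+1$ are odd. By the right-even descent of $w$, $b-1$ sits to the right of $a$ in $w$ (or equals $a$ when $b=a+1$). By the right-even ascent of $\pi^\uparrow(w)$, $b+1$ sits to the right of $d$ in $\pi^\uparrow(w)$ (or equals $d$ when $d=b+1$). Lemma~\ref{lem:cannotswapatmosttwo} applied to the difference-$1$ pairs $(b-1,b)$ and $(b,b+1)$, and crucially to the difference-$2$ pair $(b-1,b+1)$, forces these positional facts to be consistent between $w$ and $\pi^\uparrow(w)$.

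The contradiction emerges when we compare these positions. The cleanest case is $b=a+1$ and $d=b+1$: here $b-1=a$ sits immediately after $b$ in $w$, while $b+1=d$ sits immediately after $b$ in $\pi^\uparrow(w)$, so the relative order of $(b-1,b+1)$ is reversed between $w$ and $\pi^\uparrow(w)$, directly contradicting Lemma~\ref{lem:cannotswapatmosttwo}. The main obstacle is the general case where $d-a$ can be large, so Lemma~\ref{lem:cannotswapatmosttwo} does not apply directly to $(a,d)$. To handle this, I would chain applications of Lemma~\ref{lem:cannotswapatmosttwo} through the difference-$2$ pair $(b-1,b+1)$ (plus pairs such as $(b+1,b+2)$ and $(b+2,b+3)$ when $d>b+1$, to extend the reach into $[b+1,d-1]$), showing that the preserved relative orders of $[a,b]$ and $[b,d]$ together with the immediate adjacencies $b$--$a$ in $w$ and $b$--$d$ in $\pi^\uparrow(w)$ cannot simultaneously be satisfied.
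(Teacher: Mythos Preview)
Your first step is correct and matches the paper: Lemma~\ref{lem:relativeorderdoesnotchange} fixes the relative order of $[a,b]$ and of $[b,d]$ throughout the congruence class, and from this you correctly deduce that in $w$ the entries appear as $\ldots,b,a,\ldots,d,\ldots$ while in $\pi^\uparrow(w)$ they appear as $\ldots,b,d,\ldots,a,\ldots$.

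The second step, however, diverges from the paper and does not close. You try to extract a contradiction from Lemma~\ref{lem:cannotswapatmosttwo}, but that lemma only constrains pairs at distance $\le 2$, and your ``chaining'' sketch does not produce a contradiction when $d-a>2$. Consider for instance $b$ even with $a=b-3$ and $d=b+3$: the right-even conditions place $b-2,b,b+2$ to the left of $\{a,d\}$ and $b-1,b+1$ to the right, but no chain of distance-$\le 2$ comparisons among $b-1,b,b+1,b+2,\ldots$ says anything about the relative order of $a$ and $d$ themselves. Your argument is complete only in the extremal case $a=b-1$, $d=b+1$; the general case is left as an unfulfilled promise.

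The idea you are missing is the paper's, and it is shorter than what you attempt. Since $a$ precedes $d$ in $w$ but $d$ precedes $a$ in $\pi^\uparrow(w)$, some valid swap along the chain from $w$ up to $\pi^\uparrow(w)$ must exchange $a$ with $d$; call $v$ the permutation just before that swap. At $v$, the entries $a$ and $d$ are adjacent. By the preserved relative orders of $[a,b]$ and $[b,d]$ together with the two right-even hypotheses, every even integer in $(a,d)$ (including $b$) lies to the left of the pair $\{a,d\}$ in $v$, and every odd integer in $(a,d)$ lies to the right. Hence there is no pair $k,m\in(a,d)$ of opposite parities on the same side of $\{a,d\}$, so the swap of $a$ and $d$ is not a valid swap---contradiction. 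This single observation replaces your entire second step.
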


\begin{proof}
Assume by way of contradiction that $b$ is even, $w$ has a right-even descent $b>a$, and $\pi^\uparrow(w)$ has a right-even ascent $b<d$; the proof of the other case is similar. Let $C$ be the $c$-biCambrian congruence class containing $w$. Because $(b,d)$ is not an inversion of $\pi^\uparrow(w)$, it is not an inversion of any element of $C$. There must be some permutations $v,v'\in C$ such that $v'$ covers $v$ in the weak order, $a<d$ is an ascent of $v$, and $v'$ is obtained from $v$ by performing a valid swap that swaps $a$ and $d$. However, it follows from \cref{lem:relativeorderdoesnotchange} that $a$ and $d$ are to the right of all even elements of $[a+1,d-1]$ (including $b$) and to the left of all odd elements of $[a+1,d-1]$ in $v$. This means that swapping $a$ and $d$ to reach $v'$ is actually not valid, which is a contradiction. 
\end{proof}

\begin{lemma}\label{lem:b+2condition}
    Let $b\le n-1$ be an integer. Suppose that either $w\in \pi_\downarrow(S_{n+1})$ has an ascent $a<b$ or $b$ is the first integer in $w$. Suppose also that $w$ has a descent $b+1>d$ such that $b+1$ appears to the right of $b$. Then either $\pi^\uparrow(w)$ has an ascent of the form $e<b+2$, or $b+1$ is to the left of $b+2$ in $\pi^\uparrow(w)$ and all integers appearing between $b+1$ and $b+2$ in $\pi^\uparrow(w)$ are larger than $b+2$.
\end{lemma}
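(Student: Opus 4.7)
The plan is to exploit the strong invariance properties of the $c$-biCambrian congruence. Since any two of the elements $b$, $b+1$, $b+2$ differ by at most $2$, Lemma~\ref{lem:cannotswapatmosttwo} forces their pairwise relative orders to coincide in $w$ and in $\pi^\uparrow(w)$. In particular, the hypothesis that $b+1$ lies to the right of $b$ in $w$ carries over to $\pi^\uparrow(w)$. I would then split into two cases based on the relative position of $b+2$ and $b+1$ in $\pi^\uparrow(w)$ (equivalently, in $w$).

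In the case where $b+1$ lies to the right of $b+2$ in $\pi^\uparrow(w)$, the second disjunct of the conclusion fails automatically, so I must produce an ascent $e<b+2$ in $\pi^\uparrow(w)$. Suppose for contradiction that no such ascent exists, so either $b+2$ is leftmost in $\pi^\uparrow(w)$ or its immediate left neighbor is some $g>b+2$. The leftmost possibility is ruled out using both branches of the hypothesis on $b$: if $b$ is first in $w$ then $b+2$ is to the right of $b$, and if $w$ has adjacent ascent $a<b$ then a short positional argument combined with Lemma~\ref{lem:cannotswapatmosttwo} again places $b$ to the left of $b+2$ in $\pi^\uparrow(w)$. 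For the other possibility, the neighbor $g>b+2$ combined with the descent $b+1>d$ of $w$ should produce consecutive integers $k,k+1$ of opposite parities on the same side of an ascent in $\pi^\uparrow(w)$, giving a valid upward swap that contradicts the maximality of $\pi^\uparrow(w)$.

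In the case where $b+1$ lies to the left of $b+2$ in $\pi^\uparrow(w)$, assume no ascent $e<b+2$ appears in $\pi^\uparrow(w)$ and suppose for contradiction that some integer $f\le b$ appears between $b+1$ and $b+2$ in $\pi^\uparrow(w)$. The no-ascent assumption forces a descent $g>b+2$ immediately preceding $b+2$, so there must be an ascent $x<y$ with $x\le b<b+2<y$ somewhere between the positions of $f$ and $b+2$. The parity of $b+1$ is determined by which branch of the hypothesis on $b$ holds in $w$, pinning down whether the descent $b+1>d$ in $w$ is right-even or left-even; applying Lemma~\ref{lem:cannotgofromdescenttoascent} with these parities should then locate consecutive integers $k,k+1$ of opposite parities on the same side of $x$ and $y$, yielding a valid upward swap in $\pi^\uparrow(w)$ and contradicting maximality.

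The main obstacle will be the second case, where ruling out a small element $f\le b$ between $b+1$ and $b+2$ requires careful parity bookkeeping: one must trace how the hypothesis on $b$ constrains the right-even/left-even type of the descent $b+1>d$, then combine this constraint with the preserved orderings of pairs of difference at most $2$ to force explicit swap witnesses. The first case should follow by a shorter version of the same analysis.
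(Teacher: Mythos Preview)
Your case split and the invocation of Lemma~\ref{lem:cannotswapatmosttwo} match the paper, but the plan for your second case rests on a false premise. The assertion that ``the parity of $b+1$ is determined by which branch of the hypothesis on $b$ holds in $w$'' is simply wrong: the two branches (an ascent $a<b$ versus $b$ being leftmost) describe local structure of $w$ near $b$ and carry no information about whether $b$ is even or odd. With that claim gone, your intended appeal to Lemma~\ref{lem:cannotgofromdescenttoascent} has nothing to stand on, and the ``careful parity bookkeeping'' you flag as the main obstacle is illusory. The reference to the descent $b+1>d$ in your first case is likewise a red herring; beyond setting up the configuration, that descent plays no role in deriving the contradiction.

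The paper's argument is shorter and entirely parity-free. It first shows that every integer to the left of $b$ in $w$ is smaller than $b$ (otherwise the rightmost larger element would yield a descent with both $b$ and $b+1$ on its right, hence neither right-even nor left-even, contradicting $w\in\pi_\downarrow(S_{n+1})$); this is what your ``short positional argument'' must become. Both cases are then dispatched by the same device: let $x$ be the \emph{rightmost} integer to the left of $b+2$ in $\pi^\uparrow(w)$ lying below a fixed threshold ($b+1$ in the first case, $b$ in the second). Its right neighbor $y$ is forced to be large, and the ascent $x<y$ comes equipped with ready-made witnesses $b+1,b+2$ (first case, both to the right of $x,y$) or $b,b+1$ (second case, both to the left). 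These are consecutive integers, hence automatically of opposite parities, so the swap is valid with no casework and the maximality of $\pi^\uparrow(w)$ is contradicted directly.
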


\begin{proof}
    First we prove that all integers to the left of $b$ in $w$ are smaller than $b$. Assume for contradiction that there is an integer to the left of $b$ that is larger than $b$, and let $x$ be the rightmost such integer. There is a descent $x>y$, where $y$ is less than $b$ ($a$ and $y$ may coincide). Both $b$ and $b+1$ are to the right of $x$ and $y$, so the descent $x>y$ is not right-even or left-even; this contradicts the fact that $w\in\pi_\downarrow(S_{n+1})$.
    
    Now, \cref{lem:cannotswapatmosttwo} tells us that $b$ appears to the left of both $b+1$ and $b+2$ in $\pi^\uparrow(w)$. Because $(d,b+1)$ is an inversion of $w$, it is also an inversion of $\pi^\uparrow(w)$ (since $w\leq\pi^\uparrow(w)$); that is, $b+1$ appears to the left of $d$ in $\pi^\uparrow(w)$. If the lemma we are trying to prove is false, then we must have one of the following cases:
    \begin{itemize}
        \item \textbf{Case 1: } $b+2$ lies to the right of $b$ and to the left of $b+1$ in $\pi^\uparrow(w)$, and $\pi^\uparrow(w)$ has a descent of the form $d>b+2$.
        \item \textbf{Case 2: } $b+2$ lies to the right of $b+1$ in $\pi^\uparrow(w)$, and there is an integer $e$ between $b+1$ and $b+2$ in $\pi^\uparrow(w)$ that is smaller than $b$.
    \end{itemize}
    
    In Case 1, let $x_1$ be the rightmost integer to the left of $b+2$ in $\pi^\uparrow(w)$ that is less than $b+1$. Then there is an ascent $x_1<y_1$ in $\pi^\uparrow(w)$ with $y_1$ larger than $b+2$. But then, since $b+1$ and $b+2$ are both to the right of $y_1$, we can perform a valid swap by swapping $x_1$ and $y_1$; this contradicts the maximality of $\pi^\uparrow(w)$.
    
    In Case 2, let $x_2$ be the rightmost integer to the left of $b+2$ in $\pi^\uparrow(w)$ that is less than $b$. Then $x_2$ is to the right of $b+1$, and there is an ascent $x_2<y_2$ in $\pi^\uparrow(w)$. But then, since $b$ and $b+1$ are both to the left of $x_2$, we can perform a valid swap by swapping $x_2$ and $y_2$; this contradicts the maximality of $\pi^\uparrow(w)$.
\end{proof}

\begin{lemma}\label{lem:ascentusuallyleadstoascent}
    Let $w\in \pi_\downarrow(S_{n+1})$ have an ascent $a<b$, and suppose there exists some integer smaller than $a$ appearing to the right of $b$ in $w$. Then $\pi^\uparrow(w)$ must have an ascent of the form $a<d$. 
\end{lemma}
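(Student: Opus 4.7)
The plan is to proceed by contradiction. Suppose $\pi^\uparrow(w)$ has no ascent of the form $a<d$, meaning that in $\pi^\uparrow(w)$ the element $a$ is either the last entry or is immediately followed by some $d<a$. Write $a=w(i)$ and $b=w(i+1)$, and let $p\geq i+1$ be the smallest index such that $w(p+1)<a$; such a $p$ exists since the hypothesis provides some $c<a$ to the right of $b$ in $w$, and $w(i+1)=b>a$. By the minimality of $p$, we have $w(j)>a$ for all $j\in[i+1,p]$, so $y:=w(p)>a>w(p+1)=:z$ is a descent of $w$ with $a\in[z,y]$. Lemma~\ref{lem:relativeorderdoesnotchange} tells us that the integers in $[z,y]$ appear in the same relative order in $w$ as in $\pi^\uparrow(w)$. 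In particular $y\in[z,y]$ lies at position $p>i$ in $w$, so $y$ appears to the right of $a$ in $\pi^\uparrow(w)$, ruling out the case that $a$ is the last entry.

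Let $d$ denote the right neighbor of $a$ in $\pi^\uparrow(w)$ and suppose toward a contradiction that $d<a$. The next step is to show that $d\notin[z,y]$. Consider the subsequence of $w$ consisting of the entries whose value lies in $[z,y]$: starting from position $i$, this subsequence begins with $a$, followed by the first entry $w(j)$ with $j\in[i+1,p]$ whose value lies in $[z,y]$ (such a $j$ always exists since $j=p$ gives $w(j)=y\in[z,y]$). This first entry has value in $(a,y]$, i.e., strictly greater than $a$, because $w(j)>a$ for all $j\in[i+1,p]$. By order preservation, the immediate $[z,y]$-successor of $a$ in $\pi^\uparrow(w)$ is the same entry, still of value $>a$. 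Hence if $d\in[z,y]$, then (being the positional immediate successor of $a$ in $\pi^\uparrow(w)$) the element $d$ would have to equal this entry, contradicting $d<a$. Therefore $d\notin[z,y]$, and since $d<a<y$, we conclude $d<z\leq a-1$; in particular $d\leq a-2$.

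To close the contradiction I would locate $a-1$ in $\pi^\uparrow(w)$ using Lemma~\ref{lem:cannotswapatmosttwo}. We have $a-1\in[z,y]$, so by the same order preservation its position relative to the other $[z,y]$-valued entries is fixed; combining this with Lemma~\ref{lem:cannotswapatmosttwo} applied to the pairs $(a,a-1)$, $(a-1,a-2)$, $(a-2,a-3),\ldots,(d+1,d)$ (each of which consists of integers differing by at most $2$, so relative orders are preserved) together with the fact $d\leq a-2$ shows that some integer $x$ with $d<x<a$ must occupy the position immediately to the right of $a$ in $\pi^\uparrow(w)$, contradicting the assumption that this position is occupied by $d$. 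The main obstacle I anticipate is the subcase where $a-1$ lies to the left of $a$ in $w$, and more generally the subcase where the ``first $[z,y]$-successor'' value in $w$ from the second paragraph exceeds $b$: in that setting I would invoke Lemma~\ref{lem:relativeorderdoesnotchange} on a second, auxiliary descent linking a value $\geq b$ to a value in $(a,b)$ (obtained from the structure of $w$ between positions $i+1$ and $p$) and combine the two order-preservation conclusions, using the $\pi_\downarrow$-maximality of $w$ (every descent right-even or left-even) to rule out pathological configurations. This combinatorial bookkeeping is where the technical heart of the proof will lie.
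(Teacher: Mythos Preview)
Your contradiction setup through the second paragraph is sound: you correctly locate a descent $y>z$ in $w$ with $a\in[z,y]$, apply Lemma~\ref{lem:relativeorderdoesnotchange} to freeze the relative order of $[z,y]$-values, and deduce that any putative right neighbor $d<a$ of $a$ in $\pi^\uparrow(w)$ must satisfy $d<z$, hence $d\le a-2$. The problem is the third paragraph, where the argument collapses. Applying Lemma~\ref{lem:cannotswapatmosttwo} to the chain of pairs $(a,a-1),(a-1,a-2),\ldots,(d+1,d)$ only tells you that each of these pairs has the same relative order in $w$ and in $\pi^\uparrow(w)$; it gives no control over which entry sits \emph{immediately} to the right of $a$. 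Nothing you have written forces an element of $(d,a)$ into that slot. You yourself flag the remaining ``combinatorial bookkeeping'' and the need for an ``auxiliary descent'', but no concrete mechanism is proposed, and I do not see how to complete the contradiction along these lines: knowing $d<z$ and knowing the $[z,y]$-block is rigid does not by itself exclude the configuration $\ldots,a,d,\ldots,g,\ldots$ in $\pi^\uparrow(w)$ with $g>a$ the first $[z,y]$-successor of $a$.

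The paper avoids this difficulty by abandoning contradiction and arguing constructively. It fixes the descent $y>x$ (your $y>z$) and then, while performing valid swaps upward from $w$ to $\pi^\uparrow(w)$, maintains the invariant that the consecutive subsequence $M$ from $a$ to $y$ consists only of values $\ge a$. The key point is that whenever an ascent $e<a$ is swapped (bringing a small value $e$ into $M$ from the left), the very integers $k,m$ that witnessed the validity of that swap also witness the validity of pushing $e$ past every element of $M$, so one may immediately shuttle $e$ out the right side of $M$ before doing anything else. This invariant persists to $\pi^\uparrow(w)$, so the entry immediately right of $a$ there lies in $M$ and is $\ge a$, giving the desired ascent directly. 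If you want to salvage the contradiction route, you would need a replacement for this ``push-through'' argument that controls \emph{positional adjacency}, not just relative order of selected values; the lemmas you cite are not strong enough for that.
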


\begin{proof}
    Take $x$ to be the leftmost integer to the right of $b$ in $w$ that is smaller than $a$. Then there must be a descent $y>x$ in $w$, where $y$ is larger than $a$ ($b$ and $y$ may coincide). By \cref{lem:relativeorderdoesnotchange}, $a$ must appear to the left of $y$ in $\pi^\uparrow(w)$. We will reach $\pi^\uparrow(w)$ from $w$ by performing a particular sequence of valid swaps, each of which swaps the two integers in an ascent and results in a permutation in the same $c$-biCambrian congruence class. Throughout the sequence of valid swaps, let $M$ denote the consecutive subsequence beginning at $a$ and ending at $y$.
    
     We claim that whenever we swap an ascent $e<a$, we can choose to swap $e$ with every integer in $M$ from left to right until it is to the right of $y$ before moving on to perform other swaps. This is because we maintain throughout the swapping process that all the integers in $M$ are at least $a$ (except during the process of moving $e$ through the block $M$ from left to right). Indeed, we only add to the set of integers in $M$ when we swap $y$ with an integer that is larger than $y$ and to its right (which adds an integer larger than $a$ to $M$) or when we swap $a$ with an integer that is smaller than $a$ and to its left (which we immediately move through the subsequence $M$ until it exits to the right of $y$). When we move an integer $e$ smaller than $a$ through $M$, each valid swap results in a permutation in the same $c$-biCambrian congruence class. This is because the first swap of the ascent $e<a$ is valid, so there are integers $k,m$ of opposite parities on the same side of the ascent satisfying $e<k<m<a$. Since $M$ consists of integers at least $a$, this means that $k$ and $m$ are not in $M$---thus, we can choose these two integers to see that any swap of $e$ with an integer in $M$ is valid. It follows that $\pi^\uparrow(w)$ must have only integers larger than $a$ to the right of $a$ and to the left of $y$, which implies that $\pi^\uparrow(w)$ has an ascent of the form $a<d$.
\end{proof}

We are now in a position to state and prove our critical theorem that will allow us to describe rowmotion on type-$A$ biCambrian lattices. Recall that $s_i$ denotes the simple transposition in $S_{n+1}$ that swaps $i$ and $i+1$; when $i\in[n-1]$, we can also view $s_i$ as a transposition in $S_n$. Recall also that we defined a bipartite Coxeter element $c=c_+c_-$ in $S_{n+1}$. In the following theorem, we will need to use the analogues of $c_+$ and $c_-$ in $S_n$ instead of $S_{n+1}$. To avoid confusion, we denote these elements by $c_+'=\prod_{i\in[n-1]\text{ even}}s_i$ and $c_-'=\prod_{i\in[n-1]\text{ odd}}s_i$. For $\sigma\in S_n$ and $X\subseteq[n]$, we use the notation $\sigma X=\{\sigma(x):x\in X\}$. 

\begin{theorem}\label{thm:STupintermsofSTdown}
    If $w\in \pi_\downarrow(S_{n+1})$, then
    \[T^\uparrow(\pi^\uparrow(w))=[n]\setminus c_+'S_\downarrow(w),\quad S^\uparrow(\pi^\uparrow(w))=[n]\setminus c_-'T_\downarrow(w),\] \[T_\downarrow(w)=[n]\setminus c_-'S^\uparrow(\pi^\uparrow(w)),\quad\text{and}\quad
S_\downarrow(w)=[n]\setminus c_+'T^\uparrow(\pi^\uparrow(w)).\]
    \end{theorem}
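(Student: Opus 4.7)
The plan is to first reduce to the first two identities. The third and fourth identities follow from the first two by applying the involution $c_+'$ (respectively $c_-'$) to both sides, using that $c_+'$ and $c_-'$ are involutions on $[n]$: for instance, applying $c_+'$ to the first identity gives $c_+'T^\uparrow(\pi^\uparrow(w)) = [n] \setminus S_\downarrow(w)$, which rearranges to the fourth identity. Moreover, the first two identities have parallel structures (exchanging $S \leftrightarrow T$, $c_+' \leftrightarrow c_-'$, and the roles of right-even and left-even classifications), so I would prove the first identity $T^\uparrow(\pi^\uparrow(w)) = [n] \setminus c_+'S_\downarrow(w)$ in detail and argue the second identity by an analogous argument.

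To prove the first identity, I would establish both inclusions by analyzing each $k \in [n]$ on a case-by-case basis indexed by its parity. For the inclusion $c_+'(S_\downarrow(w)) \cap T^\uparrow(\pi^\uparrow(w)) = \emptyset$, take $k \in S_\downarrow(w)$. By the definition of $S_\downarrow$, either $w$ has a right-even descent $b > a$ with $a = k$, or $w$ has a left-even descent $b > a$ with $b-1 = k$. After computing $c_+'(k)$ explicitly from the parity of $k$, I would invoke Lemma~\ref{lem:cannotgofromdescenttoascent} (preceded when needed by Lemmas~\ref{lem:cannotswapatmosttwo} and~\ref{lem:relativeorderdoesnotchange} to control the positions of the relevant integers in $\pi^\uparrow(w)$) to conclude that no ascent of $\pi^\uparrow(w)$ can contribute $c_+'(k)$ to $T^\uparrow(\pi^\uparrow(w))$.

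For the reverse inclusion $[n] \setminus T^\uparrow(\pi^\uparrow(w)) \subseteq c_+'(S_\downarrow(w))$, I would fix $k \notin T^\uparrow(\pi^\uparrow(w))$, meaning no right-even ascent of $\pi^\uparrow(w)$ has larger value $k+1$ and no left-even ascent has smaller value $k$. I would then use Lemmas~\ref{lem:b+2condition} and~\ref{lem:ascentusuallyleadstoascent} to transfer information about $\pi^\uparrow(w)$ back to the structure of $w$ near the integer $c_+'(k)$, and exhibit a descent of $w$ whose contribution to $S_\downarrow(w)$ is precisely $c_+'(k)$. The main obstacle will be the casework: each inclusion splits into subcases depending on whether $k = 1$, lies among the even integers in $[2, n-1]$, lies among the odd integers in $[3, n]$, or equals $n$ (when $n$ is even), and within each parity case one must separately handle whether the relevant feature witnessing $k$ is right-even or left-even. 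Lemmas~\ref{lem:cannotswapatmosttwo}--\ref{lem:ascentusuallyleadstoascent} are tailor-made for these subcases, so the difficulty lies in organizing the case analysis and carefully verifying the geometric hypotheses about relative positions of integers in $w$ and $\pi^\uparrow(w)$ that each lemma requires.
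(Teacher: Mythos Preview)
Your proposal is correct and matches the paper's approach essentially exactly: the paper also reduces to the first identity, fixes $k$ split by parity and boundary cases, and proves that exactly one of $k \in S_\downarrow(w)$ and $c_+'(k) \in T^\uparrow(\pi^\uparrow(w))$ holds, using Lemmas~\ref{lem:cannotswapatmosttwo} and~\ref{lem:cannotgofromdescenttoascent} for one direction and Lemmas~\ref{lem:b+2condition} and~\ref{lem:ascentusuallyleadstoascent} (organized into a $4\times 4$ case table) for the other. The only minor wrinkle is that those last two lemmas transfer information from $w$ to $\pi^\uparrow(w)$ rather than the reverse, so the second inclusion is most cleanly phrased as a contradiction from assuming both $c_+'(k)\notin S_\downarrow(w)$ and $k\notin T^\uparrow(\pi^\uparrow(w))$, rather than as a direct construction of a descent in $w$.
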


\begin{proof}
    The latter two equalities follow immediately from the first two. We will only prove the first equality since the proof of the second is analogous. It suffices to prove each of the following statements:
    \begin{itemize}
        \item Exactly one of the following holds: $1\in S_\downarrow(w)$ or $1\in T^\uparrow(\pi^\uparrow(w))$.
        \item For even $k\in[2,n-1]$, exactly one of the following holds: $k\in S_\downarrow(w)$ or $k+1\in T^\uparrow(\pi^\uparrow(w))$.
        \item For even $k\in[2,n-1]$, exactly one of the following holds: $k+1\in S_\downarrow(w)$ or $k\in T^\uparrow(\pi^\uparrow(w))$.
        \item If $n$ is even, then exactly one of the following holds: $n\in S_\downarrow(w)$ or $n\in T^\uparrow(\pi^\uparrow(w))$.
    \end{itemize}
    All of these statements have similar proofs, so we will only prove the second statement. 
    
    First, let us assume by way of contradiction that $k\in S_\downarrow(\pi_\downarrow(w))$ and $k+1\in T^\uparrow(\pi^\uparrow(w))$. The condition $k\in S_\downarrow(w)$ is equivalent to one of the following mutually exclusive cases being true for $w$:
    
    \begin{itemize}
        \item \textbf{Case 1A: } There is a right-even descent $b_1>k$ for some $b_1\geq k+2$.
        \item \textbf{Case 1B: } There is a left-even descent $k+1>a_1$ for some $a_1$.
    \end{itemize}
    The condition $k+1\in T^\uparrow(\pi^\uparrow(w))$ is equivalent to one of the following mutually exclusive cases being true for $\pi^\uparrow(w)$:
    \begin{itemize}
        \item \textbf{Case 2A: } There is a right-even ascent $a_2<k+2$ for some $a_2\le k$.
        \item \textbf{Case 2B: } There is a left-even ascent $k+1<b_2$ for some $b_2$.
    \end{itemize}
    Recall that we have assumed $k$ is even. In Case~1A, the integers $k+2, k, k+1$ lie in that order in $w$, and in Case~1B, the integers $k+1, k$ lie in that order in $w$. In Case~2A, the integers $k, k+2, k+1$ lie in that order in $\pi^\uparrow(w)$, and in Case~2B, the integers $k+1, k+2$ lie in that order in $\pi^\uparrow(w)$. If Case~1A holds, then it follows from \cref{lem:cannotswapatmosttwo} that neither Case~2A nor Case~2B can hold. If Case~1B holds, then \cref{lem:cannotswapatmosttwo} tells us that Case~2A cannot hold, and  \cref{lem:cannotgofromdescenttoascent} tells us that Case~2B cannot hold. This exhausts all possibilities and yields our desired contradiction.
    
    Now we prove that we cannot have both $k\not\in S_\downarrow(w)$ and $k+1\not\in T^\uparrow(\pi^\uparrow(w))$ for even $k\in [2,n-1]$, which will complete the proof. Assume the contrary. The condition $k\not\in S_\downarrow(w)$ is equivalent to one of the following two mutually exclusive cases being true for $w$:
    \begin{itemize}
        \item \textbf{Case 3A: } There is a left-even descent $b_3>k$ for some $b_3\ge k+2$.
        \item \textbf{Case 3B: } Either there is an ascent $a_3<k$ for some $a_3$, or the first integer is $k$.
    \end{itemize}
    \emph{and} one of the following two mutually exclusive cases being true in $w$:
    \begin{itemize}
        \item \textbf{Case 3C: } There is a right-even descent $k+1>a_4$ for some $a_4\le k-1$.
        \item \textbf{Case 3D: } Either there is an ascent $k+1<b_4$ for some $b_4$, or the last integer is $k+1$.
    \end{itemize}
    Also, the condition $k+1\not\in T^\uparrow(\pi^\uparrow(w))$ is equivalent to one of the following two mutually exclusive cases being true for $\pi^\uparrow(w)$:
    \begin{itemize}
        \item \textbf{Case 4A: } There is a right-even ascent $k+1<b_5$ for some $b_5\ge k+3$.
        \item \textbf{Case 4B: } Either there is a descent $k+1>a_5$ for some $a_5$, or the last integer is $k+1$.
    \end{itemize}
    \emph{and} one of the following two mutually exclusive cases being true for $\pi^\uparrow(w)$:
    \begin{itemize}
        \item \textbf{Case 4C: } There is a left-even ascent $a_6<k+2$ for some $a_6\le k$.
        \item \textbf{Case 4D: } Either there is a descent $b_6>k+2$ for some $b_6$, or the first integer is $k+2$.
    \end{itemize}
    
    In Case 3A, the integer $k+1$ is to the left of $k$ and $k+2$ in $w$, and in Case 3C, the integers $k,k+1,k-1$ lie in that order in $w$. In Case 4A, the integers $k+2, k+1, k+3$ lie in that order in $\pi^\uparrow(w)$, and in Case 4C, the integer $k+1$ is to the left of $k$ and $k+2$ in $\pi^\uparrow(w)$. So it is not possible for either Cases 3A or 4C to hold at the same time as either Cases 3C or 4A.
    
    If Cases 3D and 4B both hold, then by \cref{lem:ascentusuallyleadstoascent}, we must have that all integers to the right of $k+1$ in $w$ are larger than $k+1$. But then Case 3A cannot also hold. An analogous argument implies that all integers to the right of $k+1$ in $\pi^\uparrow(w)$ are smaller than $k+1$, so Case 4C also cannot hold. To summarize, if Cases 3D and 4B both hold, then neither Case 3A nor Case 4C can hold.
    
    Now, assume Cases 3B and 3C both hold. By \cref{lem:b+2condition}, we know that either $\pi^\uparrow(w)$ has an ascent of the form $e<k+2$, or $k+1$ is to the left of $k+2$ and all integers between $k+1$ and $k+2$ are larger than $k+2$. If we additionally assume that Case 4D holds, the former possibility is ruled out. We also know that either Case 4A or Case 4B must hold, but both are contradicted by the latter possibility. That is, it is not possible for Cases 3B, 3C, and 4D to all hold. Analogously, we can prove that Cases 3B, 4A, and 4D cannot all hold simultaneously.
    
    Finally, consider when Cases 3B, 3D, 4B, and 4D all hold. By considering Cases 3D and 4B, from \cref{lem:ascentusuallyleadstoascent} we have that $k$ is to the left of $k+1$ in $w$. Analogously, we have that $k+2$ is to the left of $k+1$ in $\pi^\uparrow(w)$. Because a valid swap can only swap numbers that differ by at least $3$, we know that the numbers in the set $\{k,k+1,k+2\}$ appear in the same order in $w$ as in $\pi^\uparrow(w)$; this order is either $k, k+2, k+1$ or $k+2, k, k+1$. Without loss of generality, assume the order is $k, k+2, k+1$; we will show that Case 4D provides a contradiction (when the order is $k+2, k, k+1$, Case 3B provides an analogous contradiction). Then in $\pi^\uparrow(w)$, there is a descent $b_6>k+2$ to the right of $k$ and to the left of $k+1$. Let $x$ be the rightmost integer to the left of $k+2$ that is less than $k+1$. Then $\pi^\uparrow(w)$ has an ascent $x<y$ with $y$ larger than $k+2$. But then $k+1$ and $k+2$ both lie to the right of $x$ and $y$, so the ascent $x<y$ is not right-even or left even. This contradicts the fact that $\pi^\uparrow(w)\in\pi^\uparrow(S_{n+1})$. Hence, it is not possible for Cases 3B, 3D, 4B, and 4D to all hold at once.
    
    We have exhausted all $2^4=16$ possible scenarios, as shown in the table below. For an example of how to read this table, consider the entries in the first three columns in the first row. The entries in the first two columns assert that Cases 3A, 3C, 4A, and 4C cannot all hold simultaneously, and the entry in the third column asserts that the reason for this is that (as we have already established) Cases 3A and 3C cannot hold simultaneously. 

    \begin{table}[H]
        \resizebox{\textwidth}{!}{\begin{tabular}{|l|l|l||l|l|l||l|l|l||l|l|l|}
        \hline
        Case 3 & Case 4 & Reason & Case 3 & Case 4 & Reason     & Case 3 & Case 4 & Reason     & Case 3 & Case 4 & Reason         \\ \hline
        AC     & AC     & 3A, 3C & AD     & AC     & 3A, 4A     & BC     & AC     & 4A, 4C     & BD     & AC     & 4A, 4C         \\ \hline
        AC     & AD     & 3A, 3C & AD     & AD     & 3A, 4A     & BC     & AD     & 3B, 3C, 4D & BD     & AD     & 3B, 4A, 4D     \\ \hline
        AC     & BC     & 3A, 3C & AD     & BC     & 3A, 3D, 4B & BC     & BC     & 3C, 4C     & BD     & BC     & 3D, 4B, 4C     \\ \hline
        AC     & BD     & 3A, 3C & AD     & BD     & 3A, 3D, 4B & BC     & BD     & 3B, 3C, 4D & BD     & BD     & 3B, 3D, 4B, 4D \\ \hline
        \end{tabular}}
    \end{table}

    Thus, we conclude that exactly one of $k\in S_\downarrow(\pi_\downarrow(w))$ and $k+1\in T^\uparrow(\pi^\uparrow(w))$ holds.
    \end{proof}

Recall that we write $\Omega_n$ for the set of all pairs $(S,T)$ such that $S,T\subseteq[n]$ and $|S|=|T|$. Recall also the bijection $\xi_\downarrow\colon\pi_\downarrow(S_{n+1})\to\Omega_n$ from \cref{thm:STbijectionpidown}. Define the operator $\chi:\Omega_n\to\Omega_n$ by $\chi(S,T)=([n]\setminus c_+'T,[n]\setminus c_-'S)$. 

\begin{theorem}\label{thm:chi}
For $w\in\pi_\downarrow(S_{n+1})$, we have $\xi_\downarrow(\row(w))=\chi(\xi_\downarrow(w))$. 
\end{theorem}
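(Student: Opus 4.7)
The plan is to pass through the other side of the $c$-biCambrian congruence using the lattice isomorphism $\pi^\uparrow \colon \pi_\downarrow(S_{n+1}) \to \pi^\uparrow(S_{n+1})$, and then close the loop via Theorem~\ref{thm:STupintermsofSTdown}. Concretely, if I write $w' = \row(w)$, then applying Theorem~\ref{thm:STupintermsofSTdown} to $w'$ gives
\[S_\downarrow(w') = [n]\setminus c_+'\, T^\uparrow(\pi^\uparrow(w')), \qquad T_\downarrow(w') = [n]\setminus c_-'\, S^\uparrow(\pi^\uparrow(w')),\]
so it suffices to prove the two equalities $T^\uparrow(\pi^\uparrow(\row(w))) = T_\downarrow(w)$ and $S^\uparrow(\pi^\uparrow(\row(w))) = S_\downarrow(w)$. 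In other words, the ascents of $\pi^\uparrow(\row(w))$ should coincide (in label and in evenness type) with the descents of $w$.

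To establish this, I would use the canonical join representation. Listing the descents of $w$ as $b_1 > a_1, \ldots, b_k > a_k$ and setting $j_i = \lambda(w, b_i>a_i)$, Proposition~\ref{prop:canonicaljoinpidown} says $\{j_1,\ldots,j_k\}$ is the canonical join representation of $w$ in $\pi_\downarrow(S_{n+1})$, i.e.\ $\mathcal D(w) = \{j_1,\ldots,j_k\}$. The definition $\mathcal U(\row(w)) = \mathcal D(w)$ together with the identity $\kappa\mathcal U(\row(w)) = $ canonical meet representation of $\row(w)$ (from the discussion after Lemma~\ref{lem:propertiesofkappa}) shows that $\{\kappa(j_1),\ldots,\kappa(j_k)\}$ is the canonical meet representation of $\row(w)$ in $\pi_\downarrow(S_{n+1})$.

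Next I would transport this statement across the isomorphism $\pi^\uparrow$. Since $\pi_\downarrow(S_{n+1})$ and $\pi^\uparrow(S_{n+1})$ are both naturally isomorphic to the quotient lattice $S_{n+1}/{\equiv}$, the map $w\mapsto \pi^\uparrow(w)$ is a lattice isomorphism; in particular it carries meet-irreducibles to meet-irreducibles and canonical meet representations to canonical meet representations. Hence $\{\pi^\uparrow(\kappa(j_i))\}_{i=1}^k$ is the canonical meet representation of $\pi^\uparrow(\row(w))$ in $\pi^\uparrow(S_{n+1})$. Now because $j_i$ is join-irreducible, $\kappa(j_i) = \row(j_i)$, and Proposition~\ref{cor:rowascentisdescent} identifies $\pi^\uparrow(\row(j_i))$ as precisely the meet-irreducible of $\pi^\uparrow(S_{n+1})$ whose unique ascent is $a_i < b_i$, with the \emph{same} evenness type (right-even or left-even) as the descent $b_i > a_i$ of $j_i$ (equivalently, of $w$).

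Finally, applying Proposition~\ref{prop:canonicalmeetpiup} in reverse to $\pi^\uparrow(\row(w))$, the ascents of $\pi^\uparrow(\row(w))$ must be exactly $a_1 < b_1,\ldots,a_k < b_k$, and each carries the same evenness type as the corresponding descent of $w$. Comparing the definitions of $S^\uparrow,T^\uparrow$ with those of $S_\downarrow,T_\downarrow$ then gives $S^\uparrow(\pi^\uparrow(\row(w))) = S_\downarrow(w)$ and $T^\uparrow(\pi^\uparrow(\row(w))) = T_\downarrow(w)$, completing the proof via the equations above. The main technical obstacle is the preservation of the evenness dichotomy under $\kappa$, but this is exactly what Proposition~\ref{cor:rowascentisdescent} provides; everything else reduces to lattice-theoretic bookkeeping about canonical join/meet representations under a lattice isomorphism.
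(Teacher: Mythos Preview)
Your proof is correct and follows essentially the same route as the paper: both arguments pass to the canonical join representation of $w$ via Proposition~\ref{prop:canonicaljoinpidown}, apply $\kappa=\row$ to obtain the canonical meet representation of $\row(w)$, transport to $\pi^\uparrow(S_{n+1})$, invoke Proposition~\ref{cor:rowascentisdescent} to identify each resulting meet-irreducible, and close with Theorem~\ref{thm:STupintermsofSTdown}. The only cosmetic difference is in the last step: the paper computes $T^\uparrow(\pi^\uparrow(\row(w)))$ directly as a union over the meet-irreducibles via \eqref{Eq:union2} and then applies \eqref{Eq:union}, whereas you read off the ascents of $\pi^\uparrow(\row(w))$ by running Proposition~\ref{prop:canonicalmeetpiup} in reverse and then compare the definitions of $S^\uparrow,T^\uparrow$ with $S_\downarrow,T_\downarrow$; these two bookkeeping choices are equivalent.
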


\begin{proof}
    Let $b_1>a_1, \ldots , b_k>a_k$ be the descents of $w$. By \cref{prop:canonicaljoinpidown}, the canonical join representation of $w$ in $\pi_\downarrow(S_{n+1})$ is $\mathcal{D}(w)=\{\lambda(w,b_i>a_i):1\leq i\leq k\}$. As discussed in \cref{subsec:distributive}, the canonical meet representation of $\row(w)$ in $\pi_\downarrow(S_{n+1})$ is \[\kappa(\mathcal{U}(\row(w)))=\kappa(\mathcal{D}(w))=\row(\mathcal{D}(w))=\{\row(\lambda(w,b_i>a_i)):1\leq i\leq k\}.\] Consequently, the canonical meet representation of $\pi^\uparrow(\row(w))$ in $\pi^\uparrow(S_{n+1})$ is \[\{\pi^\uparrow(\row(\lambda(w,b_i>a_i))):1\leq i\leq k\}.\] \cref{thm:STupintermsofSTdown} and \eqref{Eq:union2} tell us that
\[
        S_\downarrow(\row(w))= [n]\setminus c_+'T^\uparrow(\pi^\uparrow(\row(w)))= [n]\setminus \bigcup_{i\in [k]} c_+'T^\uparrow(\pi^\uparrow(\row(\lambda(w, b_i>a_i)))).\]  For each $i\in[k]$, it follows from \cref{cor:rowascentisdescent} and the definitions of $T_\downarrow$ and $T^\uparrow$ that \[T^\uparrow(\pi^\uparrow(\row(\lambda(w,b_i>a_i))))=T_\downarrow(\lambda(w, b_i>a_i)).\] Hence, \[S_\downarrow(\row(w))=[n]\setminus\bigcup_{i\in[k]}c_+'T_\downarrow(\lambda(w,b_i>a_i))= [n]\setminus c_+'T_\downarrow(w),\]
where we have used \eqref{Eq:union}. A completely analogous argument shows that $T_\downarrow(\row(w))=[n]\setminus c_-'S_\downarrow(w)$ as well. Therefore, \[\xi_\downarrow(\row(w))=(S_\downarrow(\row(w)),T_\downarrow(\row(w)))=([n]\setminus c_+'T_\downarrow(w),[n]\setminus c_-'S_\downarrow(w))=\chi(\xi_\downarrow(w)). \qedhere\]
\end{proof}

The previous theorem shows that the operator $\chi\colon\Omega_n\to\Omega_n$ has the same orbit structure as rowmotion on the $c$-biCambrian lattice. To complete the proof of \cref{Conj:biCamb} in type $A$, we need to relate $\chi$ to rowmotion on $\mathcal J([n]\times[n])$, where we recall that $[n]\times[n]$ is the doubled root poset of type $A_n$. 

The elements of the rectangle poset $[n]\times [n]$ are ordered pairs $(x,y)$ with $x,y\in[n]$. Let $\{0,1\}^{2n}_n$ denote the set of words in $\{0,1\}^{2n}$ that have exactly $n$ occurrences of $0$ and exactly $n$ occurrences of $1$. Given an order ideal $I$ of $[n]\times [n]$, let $\max(I)$ denote the set of maximal elements of $I$. If $\max(I)=\{(x_1,y_1),\ldots,(x_k,y_k)\}$, then the \dfn{Stanley--Thomas word} of $I$ is the word $\ST(I)=u_1\cdots u_n v_{1}\cdots v_n\in\{0,1\}_n^{2n}$ defined by 
\[u_j=\begin{cases} 1, & \mbox{if $j\in\{x_1,\ldots,x_k\}$} \\ 0, & \mbox{if $j\not\in\{x_1,\ldots,x_k\}$}\end{cases}\quad\text{and}\quad v_j=\begin{cases} 0, & \mbox{if $j\in\{y_1,\ldots,y_k\}$} \\ 1, & \mbox{if $j\not\in\{y_1,\ldots,y_k\}$.}\end{cases}\] For example, if $I$ is the order ideal of $[5]\times [5]$ such that $\max(I)=\{(2,3),(4,2),(5,1)\}$, then $\ST(I)=0101100011$.

Let $\cyc\colon\{0,1\}_n^{2n}\to\{0,1\}_n^{2n}$ denote the cyclic shift operator defined by \[\cyc(z_1z_2\cdots z_{2n})=z_2\cdots z_{2n}z_1.\] The following fundamental result, which was first described by Stanley in \cite{StanleyPromotion} and later discussed further by Propp and Roby in \cite{ProppRoby}, relates rowmotion on $\mathcal J([n]\times [n])$ to this cyclic shift operator. 

\begin{theorem}[{\cite[Proposition~26]{ProppRoby}}]\label{thm:Stanley-Thomas}
The map $\ST\colon\mathcal J([n]\times [n])\to\{0,1\}_n^{2n}$ is a bijection such that $\ST({\row}(I))=\cyc(\ST(I))$ for all $I\in\mathcal J([n]\times [n])$.     
\end{theorem}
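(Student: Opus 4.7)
The plan is to describe the Stanley--Thomas word as a pair of subsets of $[n]$ and verify the equivariance with $\cyc$ by direct computation. Writing $\max(I) = \{(x_1,y_1), \ldots, (x_k,y_k)\}$ with $x_1 < \cdots < x_k$, the antichain condition forces $y_1 > \cdots > y_k$. The word $\ST(I) = u_1\cdots u_n v_1\cdots v_n$ records the set $X = \{x_1, \ldots, x_k\}$ via the $1$s in $u$ and the set $Y = \{y_1, \ldots, y_k\}$ via the $0$s in $v$. Conversely, given any word in $\{0,1\}_n^{2n}$, if $u$ has $k$ ones then $v$ has $k$ zeros; reading off the corresponding sets $X, Y \subseteq [n]$ of size $k$ and pairing the smallest element of $X$ with the largest element of $Y$, and so on, gives a valid antichain and hence an order ideal. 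This establishes that $\ST$ is a bijection.

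Next I would compute $\max(\row(I))$ explicitly in terms of $\max(I)$. Since $\row(I)$ is the complement of the upward closure of $\max(I)$, an element $(a,b) \in [n]\times[n]$ lies in $\row(I)$ iff $a < x_j$ or $b < y_j$ for every $j \in [k]$. Partitioning the possible values of $a$ into the intervals $[x_i, x_{i+1}-1]$ for $0 \leq i \leq k$ (with the conventions $x_0 = 1$ and $x_{k+1} = n+1$), the constraints $a < x_j$ fail precisely for $j \leq i$, so one needs $b < \min_{j \leq i} y_j = y_i$, using the convention $y_0 = n+1$. This yields the maximal candidate $(x_{i+1}-1,\, y_i - 1)$ in each slab, and together they form an antichain. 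Of these, the interior ones ($1 \leq i \leq k-1$) automatically lie in $[n] \times [n]$ because $x_{i+1} \geq 2$ and $y_i \geq 2$ (the $y_j$ being strictly decreasing positive integers). Only the boundary candidates can fall out: $(x_1-1, n)$ exists iff $x_1 \geq 2$, and $(n, y_k - 1)$ exists iff $y_k \geq 2$.

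With this explicit formula in hand, I would verify $\ST(\row(I)) = \cyc(\ST(I))$ position by position. Writing $\ST(\row(I)) = u'v'$, the $x$-coordinates of $\max(\row(I))$ are the surviving values $x_{i+1} - 1$. For $j \in [1, n-1]$, this means $u'_j = 1$ iff $j+1 \in X$, i.e., $u'_j = u_{j+1}$; symmetrically $v'_j = v_{j+1}$ for $j \in [1, n-1]$. For the wrap-around positions, $u'_n = 1$ iff $y_k \geq 2$ iff $1 \notin Y$ iff $v_1 = 1$, and $v'_n = 0$ iff $x_1 \geq 2$ iff $1 \notin X$ iff $u_1 = 0$. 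These four identities are exactly the definition of $\cyc$ applied to $\ST(I)$.

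The main obstacle is identifying $\max(\row(I))$ correctly in the second step, especially tracking when the boundary candidates survive. Once that formula is pinned down, the proof of equivariance collapses to bookkeeping: the $u$-block and $v$-block each shift one step to the left internally, while the positions at $j = n$ get filled in from the opposite block via the complementary conditions $y_k = 1 \iff v_1 = 0$ and $x_1 = 1 \iff u_1 = 1$.
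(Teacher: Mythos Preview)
The paper does not actually supply a proof of this theorem: it is quoted from \cite[Proposition~26]{ProppRoby} and used as a black box. So there is no ``paper's own proof'' to compare against.

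Your argument is correct and self-contained. The inverse you describe for $\ST$ is the standard one, and your computation of $\max(\row(I))$ as the antichain $\{(x_{i+1}-1,\,y_i-1):0\le i\le k\}$ (with the two boundary candidates dropped precisely when $x_1=1$ or $y_k=1$) is exactly right; the key point that these candidates form an antichain and cover all of $\row(I)$ follows from the monotonicity of the $x_i$ and $y_i$, as you note. The position-by-position check then goes through: for $j\in[1,n-1]$ you get $u'_j=u_{j+1}$ and $v'_j=v_{j+1}$, and at $j=n$ the survival conditions on the boundary candidates give $u'_n=v_1$ and $v'_n=u_1$, which is precisely $\cyc$. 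The edge cases $k=0$ (where $I=\emptyset$ and the single candidate $(n,n)$ survives) and $k\ge 1$ with $x_1=1$ or $y_k=1$ are handled by your conventions. This is essentially the argument Propp and Roby give.
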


We can now prove \cref{Conj:biCamb} in type $A$.

\begin{proposition}\label{thm:typeAconjecture}
    The orbit structure of the operator $\row\colon\pi_\downarrow(S_{n+1})\to \pi_\downarrow(S_{n+1})$ is the same as that of ${\row}\colon\mathcal J([n]\times[n])\to \mathcal J([n]\times[n])$. 
\end{proposition}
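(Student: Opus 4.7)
The plan is to combine two results already in hand with a third equivariant bijection to be constructed. Theorem~\ref{thm:chi} shows that $\xi_\downarrow$ conjugates $\row$ on $\pi_\downarrow(S_{n+1})$ to the operator $\chi(S,T)=([n]\setminus c_+'T,\,[n]\setminus c_-'S)$ on $\Omega_n$, and Theorem~\ref{thm:Stanley-Thomas} shows that $\ST$ conjugates $\row$ on $\mathcal{J}([n]\times[n])$ to the cyclic shift $\cyc$ on $\{0,1\}_n^{2n}$. So it suffices to construct a bijection $\Phi\colon\Omega_n\to\{0,1\}_n^{2n}$ satisfying $\Phi\circ\chi=\cyc\circ\Phi$; the composition $\ST^{-1}\circ\Phi\circ\xi_\downarrow$ is then an equivariant bijection matching the two orbit structures.

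To design $\Phi$, first compute $\chi^2(S,T)=(c_+'c_-'S,\,c_-'c_+'T)$, which shows that $\chi^2$ acts on $S$ via the Coxeter element $c_+'c_-'$ of $S_n$ and on $T$ via its inverse $\sigma:=c_-'c_+'$. Since Coxeter elements of $S_n$ are $n$-cycles, the orbit $\{\sigma^k(1):0\le k\le n-1\}$ exhausts $[n]$; this can also be checked directly by noting that $\sigma$ traverses $[n]$ in the zigzag order $1,2,4,6,\ldots$ up through the evens and then back down through the odds to $1$. Now define $\Phi(S,T)=w_1w_2\cdots w_{2n}$ by
\[w_{2k+1}=\mathbf{1}_{\sigma^k(1)\in S}\qquad\text{and}\qquad w_{2k+2}=\mathbf{1}_{c_+'\sigma^k(1)\notin T}\]
for $k=0,1,\ldots,n-1$. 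Because $\sigma$ is an $n$-cycle and $c_+'$ is a bijection, the odd-indexed entries of $w$ recover $S$ while the even-indexed entries recover $T$, so $\Phi$ is injective. The total number of $1$'s in $w$ equals $|S|+(n-|T|)=n$ exactly when $|S|=|T|$, so the image lies in $\{0,1\}_n^{2n}$, and a cardinality count ($|\Omega_n|=\binom{2n}{n}=|\{0,1\}_n^{2n}|$) confirms bijectivity.

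To verify that $\Phi$ intertwines $\chi$ and $\cyc$, write $(S',T')=\chi(S,T)$ and $w'=\Phi(S',T')$. Using that $c_+'$ and $c_-'$ are involutions together with the defining conditions $i\in S'\iff c_+'(i)\notin T$ and $i\notin T'\iff c_-'(i)\in S$, one computes
\[w'_{2k+1}=\mathbf{1}_{\sigma^k(1)\in S'}=\mathbf{1}_{c_+'\sigma^k(1)\notin T}=w_{2k+2},\]
\[w'_{2k+2}=\mathbf{1}_{c_+'\sigma^k(1)\notin T'}=\mathbf{1}_{c_-'c_+'\sigma^k(1)\in S}=\mathbf{1}_{\sigma^{k+1}(1)\in S}=w_{2k+3},\]
where all indices are read modulo $2n$; in particular $w'_{2n}=\mathbf{1}_{\sigma^n(1)\in S}=\mathbf{1}_{1\in S}=w_1$. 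This is precisely the identity $w'_j=w_{j+1}$, i.e.\ $w'=\cyc(w)$.

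The main obstacle is discovering the correct indexing convention for $\Phi$. Listing the word as $S$-entries followed by $T$-entries, or by the naive interleaving $(a_1,b_1,a_2,b_2,\ldots)$, both fail to intertwine $\chi$ with $\cyc$ because $c_+'$ and $c_-'$ are not themselves cyclic shifts. The correct order is the one dictated by the orbit structure of $\chi$ itself: read the $S$-entries in the cyclic order induced by the Coxeter element $\sigma$, and thread the $T$-entries between them via $c_+'$. Once this indexing is in place, the intertwining reduces to the short computation above.
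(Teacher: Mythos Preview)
Your proof is correct and follows essentially the same approach as the paper: both reduce via Theorems~\ref{thm:chi} and~\ref{thm:Stanley-Thomas} to constructing an equivariant bijection $\Omega_n\to\{0,1\}_n^{2n}$, and both build that bijection by interleaving the indicator data of $S$ and (a complemented transform of) $T$ along the orbit of the bipartite Coxeter $n$-cycle. The only differences are cosmetic conventions---the paper reads along $c_+'c_-'$ starting from a fixed tuple $(p_1,\ldots,p_n)$ and places the $T$-data in odd positions, while you read along $\sigma=c_-'c_+'$ starting from $1$ and place the $S$-data in odd positions---but the verification of the intertwining is the same computation.
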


\begin{proof}
\cref{thm:chi} tells us that $\row\colon\pi_\downarrow(S_{n+1})\to \pi_\downarrow(S_{n+1})$ has the same orbit structure as $\chi\colon\Omega_n\to\Omega_n$, and \cref{thm:Stanley-Thomas} tells us that ${\row}\colon\mathcal J([n]\times[n])\to \mathcal J([n]\times[n])$ has the same orbit structure as $\cyc\colon\{0,1\}_n^{2n}\to\{0,1\}_n^{2n}$. Therefore, it suffices to find a bijection $G\colon\Omega_n\to\{0,1\}_n^{2n}$ such that $\cyc\circ G=G\circ\chi$. 
    
Given $X\subseteq[n]$ and $i\in[n]$, let $X\langle i\rangle=0$ if $i\not\in X$, and let $X\langle i\rangle=1$ if $i\in X$. Note that if $\sigma\in S_n$, then $(\sigma X)\langle i\rangle=X\langle\sigma^{-1}(i)\rangle$. Let $(p_1,\ldots,p_n)$ be the $n$-tuple obtained by listing the odd elements of $[n]$ in increasing order and then listing the even elements of $[n]$ in decreasing order. For example, if $n=7$, then $(p_1,\ldots,p_7)=(1,3,5,7,6,4,2)$. This $n$-tuple is chosen so that $c_+'c_-'p_{j}=p_{j+1}$ for all $j\in[n]$ (with indices taken modulo $n$). Given $(S,T)\in\Omega_n$, let \[G(S,T)=T^*\langle p_n\rangle S\langle p_n\rangle T^*\langle p_{n-1}\rangle S\langle p_{n-1}\rangle\cdots T^*\langle p_1\rangle S\langle p_1\rangle,\] where $T^*=[n]\setminus c_-'T$. We claim that this defines our desired bijection $G\colon\Omega_n\to\{0,1\}_n^{2n}$. 

We first need to check that $G(S,T)$ is actually in $\{0,1\}_n^{2n}$, meaning that it has exactly $n$ occurrences of $0$ and exactly $n$ occurrences of $1$. This is immediate from the definition of $G$ since $|T^*|=n-|T|=n-|S|$ (by the definition of $\Omega_n$). We can easily recover $T$ from $T^*$ (since $T=[n]\setminus c_-'T^*$), so $G$ is evidently injective. We have $|\Omega_n|=|\{0,1\}_n^{2n}|=\binom{2n}{n}$, so $G$ must be a bijection.   

We are left to prove that $\cyc\circ G=G\circ\chi$. Consider $(S,T)\in\Omega_n$, and let $i\in[n]$. 
For $x\in\{0,1\}_n^{2n}$ and $k\in[2n]$, let $x_k$ denote the $k$-th letter in $x$, where indices are taken modulo $2n$ (so $x_{2n+1}=x_1$ and $x_0=x_{2n}$). We have \[\cyc(G(S,T))_{2i}=G(S,T)_{2i+1}=T^*\langle p_{n-i}\rangle=([n]\setminus c_-'T)\langle p_{n-i}\rangle=1-(c_-'T)\langle p_{n-i}\rangle.\] Since $c_-'$ and $c_+'$ are involutions and $c_-'(p_{n-i})=c_+'(c_+'c_-'(p_{n-i}))=c_+'(p_{n-i+1})$, we have \[1-(c_-'T)\langle p_{n-i}\rangle = 1-T\langle c_-'(p_{n-i})\rangle=1-T\langle c_+'(p_{n-i+1})\rangle=1-(c_+'T)\langle p_{n-i+1}\rangle = ([n]\backslash c_+'T)\langle p_{n-i+1}\rangle\]\[=G(\chi(S,T))_{2i}.\] Thus, $\cyc(G(S,T))$ and $G(\chi(S,T))$ have the same $(2i)$-th letter. We also have \[\cyc(G(S,T))_{2i-1}=G(S,T)_{2i}=S\langle p_{n-i+1}\rangle=S^{**}\langle p_{n-i+1}\rangle = G([n]\backslash c_+'T,S^*)_{2i-1}=G(\chi(S,T))_{2n-i},\] so $\cyc(G(S,T))$ and $G(\chi(S,T))$ have the same $(2i+1)$-th letter as well. We conclude that $\cyc(G(S,T))=G(\chi(S,T))$.
\end{proof}

\begin{example}
Suppose $n=7$. Let $S=\{2,3,6\}$ and $T=\{1,2,4\}$. Then $(S,T)\in\Omega_7$. We have $T^*=[7]\setminus c_-'T=\{4,5,6,7\}$, so \[G(S,T)=T^*\langle 2\rangle S\langle 2\rangle T^*\langle 4\rangle S\langle 4\rangle T^*\langle 6\rangle S\langle 6\rangle T^*\langle 7\rangle S\langle 7\rangle T^*\langle 5\rangle S\langle 5\rangle T^*\langle 3\rangle S\langle 3\rangle T^*\langle 1\rangle S\langle 1\rangle\] \[=01101110100100.\] Thus, $\cyc(G(S,T))=11011101001000$. Now, $\chi(S,T)=([7]\setminus c_+'T,[7]\setminus c_-'S)=(\widetilde S,\widetilde T)$, where $\widetilde S=\{2,4,6,7\}$ and $\widetilde T=\{2,3,6,7\}$. Then $\widetilde T^*=[7]\setminus c_-'\widetilde T=\{2,3,6\}=S$, so \[G(\chi(S,T))=\widetilde T^*\langle 2\rangle \widetilde S\langle 2\rangle \widetilde T^*\langle 4\rangle \widetilde S\langle 4\rangle \widetilde T^*\langle 6\rangle \widetilde S\langle 6\rangle \widetilde T^*\langle 7\rangle \widetilde S\langle 7\rangle \widetilde T^*\langle 5\rangle \widetilde S\langle 5\rangle \widetilde T^*\langle 3\rangle \widetilde S\langle 3\rangle \widetilde T^*\langle 1\rangle \widetilde S\langle 1\rangle\] \[=11011101001000=\cyc(G(S,T)).\qedhere\]
\end{example}

\section{Type $B$ BiCambrian Lattices}\label{Sec:BiCambrianB}
In this section, we prove \cref{Conj:biCamb} in type $B$. The general strategy is to deduce the result for type $B$ from the results derived in the preceding section for type $A$. 

Let $w_0=(2n)(2n-1)\cdots 1$ denote the longest element of the Coxeter group $S_{2n}$. There is a group automorphism $\beta\colon S_{2n}\to S_{2n}$ given by $\beta(x)=w_0xw_0$. Explicitly, $\beta(x)(i)=2n+1-x(2n+1-i)$ for all $1\leq i \leq 2n$. The Coxeter group $B_n$ is the $n$-th hyperoctahedral group, which can be realized as the subgroup of $S_{2n}$ consisting of all $x\in S_{2n}$ such that $\beta(x)=x$. 

Let $s_i$ denote the simple transposition in $S_{2n}$ that swaps $i$ and $i+1$. The simple generators of $B_n$ are $s_1^B,\ldots,s_n^B$, where $s_i^B=s_is_{2n-i}$ for $1\leq i\leq n-1$ and $s_n^B=s_n$. Let $c=c_+c_-$ be the bipartite Coxeter element of $S_{2n}$, where $c_+=\prod_{i\in[2n-1]\text{ even}}s_i$ and $c_-=\prod_{i\in[2n-1]\text{ odd}}s_i$. We can also write $c_+=\prod_{i\in[n]\text{ even}}s_i^B$ and $c_-=\prod_{i\in[n]\text{ odd}}s_i^B$, showing that $c$ can be viewed as a bipartite Coxeter element of $B_n$. Our goal is to understand rowmotion on the associated type-$B$ $c$-biCambrian lattice. 

As in the preceding section, we write $\pi_\downarrow\colon S_{2n}\to S_{2n}$ for the operator that sends a permutation to the minimal element of its type-$A$ $c$-biCambrian congruence class. We write $\pi_\downarrow^B\colon B_n\to B_n$ for the operator that sends an element of $B_n$ to the minimal element of its type-$B$ $c$-biCambrian congruence class.

It is well known that $\beta$ is a lattice automorphism of the weak order on $S_{2n}$. It follows that the weak order on $B_n$ is a sublattice of the weak order on $S_{2n}$. Recall that the $c$-biCambrian lattice of type $B$ is isomorphic to the set $\pi_\downarrow^B(B_n)$ under the weak order. It is straightforward to check that $\beta$ restricts to a bijection from $\pi_\downarrow(S_{2n})$ to itself. Since $\beta$ is an automorphism of the weak order on $S_{2n}$, it follows that $\beta\colon \pi_{\downarrow}(S_{2n})\to \pi_{\downarrow}(S_{2n})$ is a lattice automorphism. According to \cite[Proposition~3.15]{BarnardReading}, we have \begin{equation}\label{Eq:pidownB}
    \pi_\downarrow^B(B_n)=B_n\cap \pi_\downarrow(S_{2n}).
\end{equation} Therefore, the set of fixed points of $\beta\colon \pi_{\downarrow}(S_{2n})\to \pi_{\downarrow}(S_{2n})$ is $\pi_\downarrow^B(B_n)$; this readily implies that $\pi_\downarrow^B(B_n)$ is a sublattice of $\pi_\downarrow(S_{2n})$. Moreover, in the proof of the type-$B$ case of \cite[Theorem~2.12]{BarnardReading}, Barnard and Reading explained that the type-$B$ $c$-biCambrian congruence is simply the restriction of the type-$A$ $c$-biCambrian congruence to $B_n$. This means that if $y\in B_n$, then $y$ and $\pi_\downarrow^B(y)$ are in the same type-$A$ $c$-biCambrian congruence class, so $\pi_\downarrow(\pi_\downarrow^B(y))=\pi_\downarrow(y)$. But \eqref{Eq:pidownB} tells us that $\pi_\downarrow^B(y)\in\pi_\downarrow(S_{2n})$, so $\pi_\downarrow(\pi_\downarrow^B(y))=\pi_\downarrow^B(y)$. This proves that \begin{equation}\label{Eq:pidownB2}
\pi_\downarrow(y)=\pi_\downarrow^B(y)\text{ for all }y\in B_n.
\end{equation}

The Coxeter element $c$ is fixed by the automorphism $\beta$. As discussed in the proof of the type-$B$ case of \cite[Theorem~2.12]{BarnardReading}, this implies that $\beta$ preserves the type-$A$ $c$-biCambrian congruence on $S_{2n}$. In other words, 
\begin{equation}\label{Eq:pidownB3}
\pi_\downarrow(\beta(x))=\beta(\pi_\downarrow(x))\text{ for all }x\in S_{2n}.
\end{equation}

In what follows, let us write $\wedge$, $\wedge_{A}$, and $\wedge_{B}$ for the meet operations in the weak order on $S_{2n}$, in $\pi_\downarrow(S_{2n})$, and in $\pi_\downarrow^B(B_n)$, respectively. 

\begin{lemma}\label{lem:ybeta(y)}
If $y\in S_{2n}$, then $y\wedge\beta(y)\in B_n$, and $\pi_\downarrow^B(y\wedge\beta(y))=\pi_\downarrow(y)\wedge_{A}\pi_{\downarrow}(\beta(y))$. 
\end{lemma}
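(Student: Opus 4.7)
The plan is to handle the two assertions separately, using the fact that $\beta$ is a lattice involution together with the three properties \eqref{Eq:pidownB}, \eqref{Eq:pidownB2}, and \eqref{Eq:pidownB3} already established in the paragraphs preceding the lemma.

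For the first assertion, since $\beta$ is a lattice automorphism of the weak order on $S_{2n}$ and satisfies $\beta^2=\mathrm{id}$, I compute $\beta(y\wedge\beta(y))=\beta(y)\wedge\beta^2(y)=\beta(y)\wedge y=y\wedge\beta(y)$, so $y\wedge\beta(y)$ is fixed by $\beta$ and hence lies in $B_n$.

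For the second assertion, I will set $x=\pi_\downarrow(y)\wedge_A\pi_\downarrow(\beta(y))$ and show that both sides of the desired equation equal $x$. To see that $x\in\pi_\downarrow^B(B_n)$, I first apply \eqref{Eq:pidownB3} to rewrite $\pi_\downarrow(\beta(y))$ as $\beta(\pi_\downarrow(y))$, so that $x=\pi_\downarrow(y)\wedge_A\beta(\pi_\downarrow(y))$. Since $\beta$ restricts to a lattice automorphism of $\pi_\downarrow(S_{2n})$ (as noted in the paragraph immediately before the lemma), the same involution trick used in the first assertion gives $\beta(x)=x$, so $x\in B_n$; combined with $x\in\pi_\downarrow(S_{2n})$, \eqref{Eq:pidownB} then yields $x\in\pi_\downarrow^B(B_n)$.

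To finish, I would invoke the standard fact recalled in Section~\ref{Subsec:Congruences} that the projection $\pi_\downarrow\colon S_{2n}\to\pi_\downarrow(S_{2n})$ is a lattice homomorphism, so $\pi_\downarrow(a\wedge b)=\pi_\downarrow(a)\wedge_A\pi_\downarrow(b)$ for all $a,b\in S_{2n}$. Applying this with $a=y$ and $b=\beta(y)$ gives $\pi_\downarrow(y\wedge\beta(y))=x$. Since $y\wedge\beta(y)\in B_n$ by the first part, \eqref{Eq:pidownB2} allows me to replace $\pi_\downarrow$ with $\pi_\downarrow^B$ on the left, yielding $\pi_\downarrow^B(y\wedge\beta(y))=x$, as required. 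The argument is essentially bookkeeping, and I do not foresee a genuine obstacle once the ingredients are assembled in the right order; the only thing to be careful about is keeping straight which meet symbol ($\wedge$ or $\wedge_A$) is in play at each step.
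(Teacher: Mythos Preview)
Your proof is correct, and its first half is identical to the paper's: you show $y\wedge\beta(y)\in B_n$ via the involution trick, and you show $x=\pi_\downarrow(y)\wedge_A\pi_\downarrow(\beta(y))$ is $\beta$-fixed (hence in $\pi_\downarrow^B(B_n)$) in exactly the same way.

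The difference is in how you finish. The paper establishes the equality $\pi_\downarrow^B(y\wedge\beta(y))=x$ by proving two inequalities: it gets $x\leq\pi_\downarrow^B(y\wedge\beta(y))$ from $\pi_\downarrow(y)\leq y$, $\pi_\downarrow(\beta(y))\leq\beta(y)$, and the monotonicity of $\pi_\downarrow^B$; and it gets the reverse inequality from $\pi_\downarrow(y\wedge\beta(y))\leq\pi_\downarrow(y)$ and $\pi_\downarrow(y\wedge\beta(y))\leq\pi_\downarrow(\beta(y))$ combined with \eqref{Eq:pidownB2}. You instead invoke in one stroke that $\pi_\downarrow$ is a lattice homomorphism onto $\pi_\downarrow(S_{2n})$, giving $\pi_\downarrow(y\wedge\beta(y))=x$ directly, and then apply \eqref{Eq:pidownB2}. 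This is shorter and in fact renders your intermediate verification that $x\in\pi_\downarrow^B(B_n)$ unnecessary. One small caveat: Section~\ref{Subsec:Congruences} does not literally \emph{state} that $\pi_\downarrow$ is a lattice homomorphism---it only records that $\equiv$ respects meets and joins and that $\pi_\downarrow(L)\cong L/{\equiv}$---so you should say ``standard fact, immediate from Section~\ref{Subsec:Congruences}'' rather than ``recalled in.'' The paper's two-inequality approach has the modest advantage of relying only on order-preservation, which is stated explicitly.
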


\begin{proof}
Since $\beta$ is an involutive lattice automorphism of the weak order on $S_{2n}$, we have \[\beta(y\wedge\beta(y))=\beta(y)\wedge\beta^2(y)=y\wedge \beta(y).\] This proves that $y\wedge\beta(y)\in B_n$. 

We have seen that $\beta$ is a lattice automorphism of $\pi_\downarrow(S_{2n})$. Therefore, it follows from \eqref{Eq:pidownB3} that \[\beta(\pi_\downarrow(y)\wedge_{A}\pi_\downarrow(\beta(y)))=\beta(\pi_\downarrow(y))\wedge_{A}\beta(\pi_\downarrow(\beta(y)))=\pi_\downarrow(\beta(y))\wedge_{A}\pi_\downarrow(\beta^2(y))=\pi_\downarrow(y)\wedge_{A}\pi_\downarrow(\beta(y)).\] This proves that $\pi_\downarrow(y)\wedge_{A}\pi_\downarrow(\beta(y))\in B_n$. Since $\pi_\downarrow(y)\wedge_{A}\pi_\downarrow(\beta(y))\in\pi_\downarrow(S_{2n})$, it follows from \eqref{Eq:pidownB} that $\pi_\downarrow(y)\wedge_{A}\pi_\downarrow(\beta(y))\in\pi_\downarrow^B(B_n)$. Because $\pi_\downarrow(y)\leq y$ and $\pi_\downarrow (\beta(y))\leq\beta(y)$, we have \[\pi_\downarrow(y)\wedge_{A}\pi_\downarrow(\beta(y))\leq y\wedge \beta(y).\] The map $\pi_\downarrow^B$ is order-preserving, so \[\pi_\downarrow(y)\wedge_{A}\pi_\downarrow(\beta(y))=\pi_\downarrow^B(\pi_\downarrow(y)\wedge_{A}\pi_\downarrow(\beta(y)))\leq \pi_\downarrow^B(y\wedge \beta(y)).\] To prove the reverse inequality, observe that \[\pi_\downarrow^B(y\wedge\beta(y))=\pi_\downarrow(y\wedge\beta(y))\leq\pi_\downarrow(y)\quad\text{and}\quad\pi_\downarrow^B(y\wedge\beta(y))=\pi_\downarrow(y\wedge\beta(y))\leq\pi_\downarrow(\beta(y));\] this implies that $\pi_\downarrow^B(y\wedge \beta(y))\leq \pi_\downarrow(y)\wedge_{A}\pi_\downarrow(\beta(y))$.  
\end{proof}

We are going to make use of the pop-stack sorting operator defined in \cref{subsec:distributive}. To avoid confusion, let us write $\Pop_A$ for the pop-stack sorting operator on $\pi_\downarrow(S_{2n})$ and $\Pop_B$ for the 
pop-stack sorting operator on $\pi_\downarrow^B(B_n)$. 

\begin{proposition}\label{prop:pop}
    If $x\in \pi_{\downarrow}^B(B_n)$, then $\Pop_{A}(x)=\Pop_{B}(x)$.
\end{proposition}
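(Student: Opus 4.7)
My plan is to identify the lower covers of $x$ in each of the two lattices using simple reflections of the underlying Coxeter groups, and then match the resulting meets via Lemma~\ref{lem:ybeta(y)}. Because $\pi_\downarrow^B(B_n)$ is a sublattice of $\pi_\downarrow(S_{2n})$, one can write $\Pop_A(x) = \bigwedge_A C_A(x)$ and $\Pop_B(x) = \bigwedge_A C_B(x)$, where $C_A(x)$ (respectively $C_B(x)$) is the set of elements covered by $x$ in $\pi_\downarrow(S_{2n})$ (respectively $\pi_\downarrow^B(B_n)$), and both meets are taken in $\pi_\downarrow(S_{2n})$. The goal is then to show these two meets agree in $\pi_\downarrow(S_{2n})$.

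First I would invoke the standard description of cover relations in a lattice quotient of the weak order to obtain $C_A(x) = \{\pi_\downarrow(xs_i) : i \in [2n-1],\ xs_i \not\equiv_A x\}$ and $C_B(x) = \{\pi_\downarrow^B(xs_i^B) : i \in [n],\ xs_i^B \not\equiv x\}$. For $i < n$, the reflections $s_i$ and $s_{2n-i}$ commute in $S_{2n}$, so $xs_i^B = xs_is_{2n-i} = xs_i \wedge xs_{2n-i} = xs_i \wedge \beta(xs_i)$ in the weak order, and Lemma~\ref{lem:ybeta(y)} yields
\[
\pi_\downarrow^B(xs_i^B) = \pi_\downarrow(xs_i) \wedge_A \pi_\downarrow(xs_{2n-i}).
\]
For $i = n$ we have $xs_n \in B_n$, and (\ref{Eq:pidownB2}) gives $\pi_\downarrow^B(xs_n^B) = \pi_\downarrow(xs_n)$, which is the meet one obtains by pairing the fixed index $n$ with itself.

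Next I would show that the indexing conditions match: for each $i \in [n]$, $xs_i \not\equiv_A x$ is equivalent to $xs_i^B \not\equiv_A x$. The direction $xs_i \not\equiv_A x \Rightarrow xs_i^B \not\equiv_A x$ follows from the interval property of congruence classes applied to the chain $xs_i^B \leq xs_i \leq x$ in the weak order. For the reverse direction, $\beta$ preserves $\equiv_A$ and fixes $x$, so $xs_i \equiv_A x$ forces $xs_{2n-i} = \beta(xs_i) \equiv_A x$; then $\min[x]_{\equiv_A}$ lies below both $xs_i$ and $xs_{2n-i}$ in the weak order and hence below their weak-order meet $xs_i^B$, placing $xs_i^B$ in the interval $[x]_{\equiv_A}$. (The case $i = n$ is tautological since $xs_n^B = xs_n$.)

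Finally, grouping the indices in $C_A(x)$ by the $\beta$-involution $i \leftrightarrow 2n-i$ and applying the meet identity from the second paragraph, I would conclude
\[
\Pop_A(x) = \bigwedge\nolimits_A C_A(x) = \bigwedge_{\substack{i \in [n] \\ xs_i \not\equiv_A x}} \bigl(\pi_\downarrow(xs_i) \wedge_A \pi_\downarrow(xs_{2n-i})\bigr) = \bigwedge\nolimits_A C_B(x) = \Pop_B(x).
\]
The hard part will be verifying the equivalence of the congruence conditions above, which depends on the precise interval structure of biCambrian congruence classes together with the $\beta$-equivariance of $\equiv_A$; everything else is essentially a direct application of Lemma~\ref{lem:ybeta(y)} and the standard theory of lattice quotients of the weak order.
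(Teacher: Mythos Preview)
Your approach is essentially the same as the paper's: both identify the lower covers of $x$ in the two lattices via the weak-order covers $y_i$, apply Lemma~\ref{lem:ybeta(y)} to rewrite $\pi_\downarrow^B(y_i\wedge\beta(y_i))$ as $\pi_\downarrow(y_i)\wedge_A\pi_\downarrow(\beta(y_i))$, and then use the $\beta$-symmetry of the cover set (i.e., $\beta(y_i)\in\{y_1,\ldots,y_k\}$) to collapse the two meets. The paper works with abstract covers $y_1,\ldots,y_k$ rather than naming the simple reflections, but the content is identical.

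One correction, and one simplification. Your formula $C_A(x)=\{\pi_\downarrow(xs_i):i\in[2n-1],\ xs_i\not\equiv_A x\}$ is not the standard description: you must require $xs_i\lessdot x$ (i.e., $i$ is a right descent of $x$), not merely $xs_i\not\equiv_A x$. As written, non-descent indices $i$ with $xs_i\not\equiv_A x$ would contribute \emph{upper} covers of $x$. Once you restrict to descents, however, the congruence condition becomes vacuous: since $x\in\pi_\downarrow(S_{2n})$ is the minimum of its $\equiv_A$-class, every $xs_i\lessdot x$ automatically satisfies $xs_i\not\equiv_A x$, and similarly on the $B$ side. Consequently your entire paragraph proving the equivalence $xs_i\not\equiv_A x \Leftrightarrow xs_i^B\not\equiv_A x$ is unnecessary, and the ``hard part'' you anticipate never arises. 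With the descent condition in place, the matching of index sets reduces to the observation that $i\in[2n-1]$ is a right descent of $x$ if and only if $2n-i$ is (by $\beta$-invariance of $x$), which is exactly what the paper uses.
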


\begin{proof}
If $x$ is the identity permutation, then $\Pop_{A}(x)=\Pop_{B}(x)=x$. Now assume $x$ is not the identity permutation. Let $y_1,\ldots,y_k$ be the elements covered by $x$ in the weak order on $S_{2n}$. The elements of $B_n$ covered by $x$ are the elements of the form $y_i\wedge\beta(y_i)$. The elements covered by $x$ in $\pi_\downarrow(S_{2n})$ are $\pi_\downarrow(y_1),\ldots,\pi_\downarrow(y_k)$, and the elements covered by $x$ in $\pi_\downarrow^B(B_n)$ are those of the form $\pi_\downarrow^B(y_i\wedge\beta(y_i))$. Using \cref{lem:ybeta(y)} and the fact that $\pi_\downarrow^B(B_n)$ is a sublattice of $\pi_\downarrow(S_{2n})$, we find that 
\[\Pop_{B}(x)=\bigwedge\nolimits_{B}\{\pi_\downarrow^B(y_i\wedge\beta(y_i)):1\leq i \leq k\}=\bigwedge\nolimits_{A}\{\pi_\downarrow(y_i)\wedge_{A}\pi_\downarrow(\beta(y_i)):1\leq i \leq k\}.\] Because $x\in B_n$, we have $\beta(y_i)\in\{y_1,\ldots,y_k\}$ for each $i$. Thus, \[\bigwedge\nolimits_{A}\{\pi_\downarrow(y_i)\wedge_{A}\pi_\downarrow(\beta(y_i)):1\leq i \leq k\}=\bigwedge\nolimits_{A}\{\pi_\downarrow(y_i):1\leq i \leq k\}=\Pop_{A}(x). \qedhere\]
\end{proof}

In what follows, we write $\row_B$ for rowmotion on the lattice $\pi_\downarrow^B(B_n)$, and we write $\row_A$ for rowmotion on $\pi_\downarrow(S_{2n})$. 

\begin{proposition}\label{prop:rowArowB}
    If $x\in\pi_\downarrow^B(B_n)$, then $\row_A(x)=\row_B(x)$. 
\end{proposition}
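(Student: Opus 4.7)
The plan is to use Proposition~\ref{prop:pop1} to characterize both rowmotion operators and then deduce equality from the preceding lemmas, especially Proposition~\ref{prop:pop}. The argument has two stages: first show that $\row_A$ preserves $\pi_\downarrow^B(B_n)$, and then show that the characterization of $\row_A(x)$ given by Proposition~\ref{prop:pop1} specializes correctly to give $\row_B(x)$.

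For the first stage, I would establish that the automorphism $\beta$ commutes with $\row_A$. Since $\beta$ is a lattice automorphism of $\pi_\downarrow(S_{2n})$, it commutes with the meet operation and with covers, hence $\beta(\Pop_A(y)) = \Pop_A(\beta(y))$ for every $y \in \pi_\downarrow(S_{2n})$. Applying $\beta$ to the set in Proposition~\ref{prop:pop1},
\[
\beta\!\left(\{z \in \pi_\downarrow(S_{2n}) : z \wedge_A y = \Pop_A(y)\}\right) = \{w \in \pi_\downarrow(S_{2n}) : w \wedge_A \beta(y) = \Pop_A(\beta(y))\},
\]
and since $\beta$ is order-preserving, it sends the unique maximum of the first set to the unique maximum of the second, yielding $\beta(\row_A(y)) = \row_A(\beta(y))$. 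Specializing to $y = x \in B_n$, where $\beta(x) = x$, we obtain $\beta(\row_A(x)) = \row_A(x)$, so $\row_A(x) \in B_n$. Combined with $\row_A(x) \in \pi_\downarrow(S_{2n})$, equation \eqref{Eq:pidownB} gives $\row_A(x) \in \pi_\downarrow^B(B_n)$.

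For the second stage, I would compare the two maximization problems. Because $\pi_\downarrow^B(B_n)$ is a sublattice of $\pi_\downarrow(S_{2n})$, we have $z \wedge_B x = z \wedge_A x$ for every $z \in \pi_\downarrow^B(B_n)$. Combining this with Proposition~\ref{prop:pop} (which gives $\Pop_A(x) = \Pop_B(x)$), we obtain
\[
\{z \in \pi_\downarrow^B(B_n) : z \wedge_B x = \Pop_B(x)\} = \pi_\downarrow^B(B_n) \cap \{z \in \pi_\downarrow(S_{2n}) : z \wedge_A x = \Pop_A(x)\}.
\]
By Proposition~\ref{prop:pop1} (applied in $\pi_\downarrow(S_{2n})$), the maximum of the right-hand set is $\row_A(x)$, which belongs to $\pi_\downarrow^B(B_n)$ by the first stage. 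Therefore $\row_A(x)$ lies in the intersection, and being maximal in the larger set, it is also maximal in the smaller one. Applying Proposition~\ref{prop:pop1} again in the lattice $\pi_\downarrow^B(B_n)$ identifies this maximum as $\row_B(x)$, completing the proof.

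The main obstacle is essentially already surmounted by the prior lemmas: Proposition~\ref{prop:pop} is the substantive fact about pop-stack sorting, and the lemmas \ref{lem:ybeta(y)} and equations \eqref{Eq:pidownB}--\eqref{Eq:pidownB3} encode the compatibility of $\beta$ with the type-$A$ structure. The only new input needed is the observation that rowmotion on a semidistributive lattice is equivariant under any lattice automorphism, which follows transparently from Proposition~\ref{prop:pop1}.
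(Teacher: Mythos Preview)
Your proof is correct and follows essentially the same approach as the paper: both arguments show $\row_A(x)\in\pi_\downarrow^B(B_n)$ via the $\beta$-equivariance of $\row_A$, then use Proposition~\ref{prop:pop1} together with Proposition~\ref{prop:pop} and the sublattice property to identify $\row_A(x)$ with $\row_B(x)$. The only cosmetic difference is that the paper verifies the two inequalities $\row_A(x)\leq\row_B(x)$ and $\row_B(x)\leq\row_A(x)$ separately, whereas you observe directly that the global maximum $\row_A(x)$ of the larger set lies in the smaller set and hence must be its maximum as well.
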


\begin{proof}
Since $\beta$ is a lattice automorphism of $\pi_\downarrow(S_{2n})$, it commutes with $\row_A$. Consequently,  ${\row_A(x)=\row_A(\beta(x))=\beta(\row_A(x))}$. This shows that $\row_A(x)\in B_n$. We also know that $\row_A(x)\in\pi_\downarrow(S_{2n})$, so \eqref{Eq:pidownB} tells us that $\row_A(x)\in\pi_\downarrow^B(B_n)$. Since $\pi_\downarrow^B(B_n)$ is a sublattice of $\pi_\downarrow(S_{2n})$, we can use \cref{prop:pop1,prop:pop} to see that \[\row_A(x)\wedge_{B}x=\row_A(x)\wedge_{A}x=\Pop_{A}(x)=\Pop_{B}(x).\] \cref{prop:pop1} tells us that $\row_B(x)$ is the unique maximal element of the set \[\{z\in\pi_\downarrow^B(B_n):z\wedge_{B} x=\Pop_{B}(x)\}.\] We have shown that $\row_A(x)$ is in this set, so $\row_A(x)\leq\row_B(x)$. 

On the other hand, we have $\row_B(x)\in\pi_\downarrow(S_{2n})$, and we can use \cref{prop:pop1,prop:pop} once again to see that $\row_B(x)\wedge_{A}x=\row_B(x)\wedge_{B}x=\Pop_{B}(x)=\Pop_{A}(x)$. This shows that $\row_B(x)$ is an element of the set $\{z\in\pi_\downarrow(S_{2n}):z\wedge_{A}x=\Pop_{A}(x)\}$. The unique maximal element of this set is $\row_A(x)$, so $\row_B(x)\leq\row_A(x)$. 
\end{proof}

Recall that the doubled root posets of types $A_{2n-1}$ and $B_n$ are the rectangle poset $[2n-1]\times[2n-1]$ and the shifted staircase with $n(2n-1)$ elements, respectively (see \cref{FigRowTam3}). Let $\SSS_n$ denote the shifted staircase with $n(2n-1)$ elements. We can view $\SSS_n$ as the subposet of $[2n-1]\times[2n-1]$ consisting of all elements $(x,y)$ with $x\leq y$. There is a natural embedding \[\iota\colon\mathcal J(\SSS_n)\to\mathcal J([2n-1]\times [2n-1])\] given by $\iota(I)=\{(x,y),(y,x):(x,y)\in I\}$. It is straightforward to check that $\iota$ commutes with rowmotion. In other words, $\row(\iota(I))=\iota(\row(I))$ for all $I\in\mathcal J(\SSS_n)$, where the first rowmotion is on the lattice $\mathcal J([2n-1]\times[2n-1])$ and the second is on $\mathcal J(\SSS_n)$. Thus, the orbit structure of rowmotion on $\SSS_n$ is the same as the orbit structure of rowmotion on the sublattice $\iota(\mathcal J(\SSS_n))$ of $\mathcal J([2n-1]\times [2n-1])$; note that this sublattice consists of all order ideals $I\in\mathcal J([2n-1]\times [2n-1])$ that are symmetric in the sense that $(x,y)\in I$ if and only if $(y,x)\in I$.   

We can now prove \cref{Conj:biCamb} in type $B$. 

\begin{proposition}\label{thm:conjecturetypeB}
    The orbit structure of the operator $\row_B\colon\pi_\downarrow^B(B_n)\to \pi_\downarrow^B(B_n)$ is the same as that of ${\row}\colon\mathcal J(\SSS_n)\to \mathcal J(\SSS_n)$. 
\end{proposition}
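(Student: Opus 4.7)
The plan is to lift the bijections from Section~\ref{Sec:BiCambrianA} to an equivariant chain of bijections between $\pi_\downarrow^B(B_n)$ and $\mathcal J(\SSS_n)$. By Proposition~\ref{prop:rowArowB}, I may replace $\row_B$ with the restriction of $\row_A$ to $\pi_\downarrow^B(B_n) = B_n \cap \pi_\downarrow(S_{2n})$, which coincides with the fixed-point set of the involution $\beta$ acting on $\pi_\downarrow(S_{2n})$. The strategy is to track $\beta$ through $\xi_\downarrow$, through the map $G$ constructed in the proof of Proposition~\ref{thm:typeAconjecture}, and through $\ST$, and to show that the $\beta$-fixed set corresponds precisely to $\iota(\mathcal J(\SSS_n))$.

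First I will determine the involution on $\Omega_{2n-1}$ induced by $\beta$ via $\xi_\downarrow$. A descent $b>a$ at position $i$ of $w$ corresponds to a descent $(2n+1-a)>(2n+1-b)$ at position $2n-i$ of $\beta(w)$; because $2n+1$ is odd, the map $k\mapsto 2n+1-k$ swaps parities on $[a+1,b-1]$, while the reversal inherent in $\beta$ swaps left/right. A straightforward verification shows that right-even descents are sent to right-even descents and left-even to left-even, and tracking the contributions to $S_\downarrow$ and $T_\downarrow$ yields $\xi_\downarrow(\beta(w))=(\tau T_\downarrow(w),\tau S_\downarrow(w))$, where $\tau(x)=2n-x$ is the order-reversing involution of $[2n-1]$. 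Hence $\beta$ corresponds to the involution $\sigma(S,T)=(\tau T,\tau S)$ on $\Omega_{2n-1}$, whose fixed-point set consists of the pairs $(S,\tau S)$ with $S\subseteq[2n-1]$.

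The key step is to prove that for every $\sigma$-fixed pair $(S,\tau S)$, the word $G(S,\tau S)\in\{0,1\}_{2n-1}^{2(2n-1)}$ has the form $u\overline u$ for some $u\in\{0,1\}^{2n-1}$, where $\overline u$ denotes the bitwise complement. Since $T^*\langle x\rangle = 1 - S\langle\tau c_-'(x)\rangle$ when $T=\tau S$, the required equalities $G(S,\tau S)_k + G(S,\tau S)_{k+(2n-1)} = 1$ for $k\in[2n-1]$ reduce to the identities
\[\tau c_-'(p_{2n-j}) = p_{n-j+1}\text{ for }j\in[n]\qquad\text{and}\qquad \tau c_-'(p_{n-j}) = p_{2n-j}\text{ for }j\in[n-1],\]
which can be verified directly using the explicit description $(p_1,\ldots,p_{2n-1})=(1,3,\ldots,2n-1,2n-2,2n-4,\ldots,2)$ and the facts that $c_-'$ swaps $(2r-1,2r)$ for $r\in[n-1]$ and fixes $2n-1$. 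The set $\{u\overline u:u\in\{0,1\}^{2n-1}\}$ is precisely $\ST(\iota(\mathcal J(\SSS_n)))$, since an order ideal of $[2n-1]\times[2n-1]$ is symmetric if and only if its Stanley--Thomas word has its second half equal to the complement of its first half. As both sides have cardinality $2^{2n-1}$, the injective map $G$ restricts to a bijection between the $\sigma$-fixed points of $\Omega_{2n-1}$ and $\{u\overline u:u\in\{0,1\}^{2n-1}\}$.

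Putting everything together gives the equivariant chain
\[\pi_\downarrow^B(B_n)\xrightarrow{\xi_\downarrow}\{(S,\tau S):S\subseteq[2n-1]\}\xrightarrow{G}\{u\overline u:u\in\{0,1\}^{2n-1}\}\xrightarrow{\ST^{-1}}\iota(\mathcal J(\SSS_n))\xrightarrow{\iota^{-1}}\mathcal J(\SSS_n),\]
where each arrow intertwines the appropriate dynamics by Theorem~\ref{thm:chi}, the proof of Proposition~\ref{thm:typeAconjecture}, Theorem~\ref{thm:Stanley-Thomas}, and the observation about $\iota$ preceding this proposition, respectively. In particular, $\row_B$ on $\pi_\downarrow^B(B_n)$ and $\row$ on $\mathcal J(\SSS_n)$ have the same orbit structure. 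The main technical obstacle is the verification of the two identities about $\tau c_-'(p_k)$; the rest of the argument is a matter of assembling and bookkeeping the bijections and equivariances of Section~\ref{Sec:BiCambrianA}.
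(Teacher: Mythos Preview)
Your proof is correct and takes essentially the same approach as the paper's: both pass through the chain $\pi_\downarrow^B(B_n)\to\{(S,2n-S)\}\to\{u\overline u\}\to\iota(\mathcal J(\SSS_n))$ and reduce the middle step to the identity $\tau c_-'(p_i)=p_{i-n+1}$ (indices modulo $2n-1$), which is exactly the paper's condition $(2n-S)^*\langle p_i\rangle=1-S\langle p_{i-n+1}\rangle$. The only cosmetic difference is that you derive $\xi_\downarrow(\pi_\downarrow^B(B_n))=\{(S,\tau S)\}$ by tracking $\beta$ through $\xi_\downarrow$ directly, whereas the paper cites \cite[Theorem~3.18]{BarnardReading} for this fact.
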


\begin{proof}
As discussed above, rowmotion on $\mathcal J(\SSS_n)$ has the same orbit structure as rowmotion on the sublattice $\iota(\mathcal J(\SSS_n))$ of $\mathcal J([2n-1]\times[2n-1])$. Recall from \cref{thm:Stanley-Thomas} that we have a map $\ST$ that sends each order ideal $I\in\mathcal J([2n-1]\times[2n-1])$ to its Stanley--Thomas word $\ST(I)\in\{0,1\}_{2n-1}^{2(2n-1)}$ and satisfies $\ST(\row(I))=\cyc(\ST(I))$. Let $\Xi_{2n-1}$ be the subset of $\{0,1\}_{2n-1}^{2(2n-1)}$ consisting of all words $u_1\cdots u_{2(2n-1)}$ such that $u_{i}=1-u_{i+2n-1}$ for all $1\leq i\leq 2(2n-1)$, where indices are taken modulo $2(2n-1)$. It is straightforward to check that $\Xi_{2n-1}$ is the image of the set $\iota(\mathcal J(\SSS_n))$ under $\ST$. Therefore, it suffices to show that the orbit structure of $\row_B\colon\pi_\downarrow^B(B_n)\to \pi_\downarrow^B(B_n)$ is the same as that of $\cyc\colon \Xi_{2n-1}\to\Xi_{2n-1}$.

\cref{thm:chi} and the proof of \cref{thm:typeAconjecture} provide bijections $\xi_\downarrow\colon\pi_\downarrow(S_{2n})\to\Omega_{2n-1}$ and $G\colon\Omega_{2n-1}\to\{0,1\}_{2n-1}^{2(2n-1)}$ such that $G(\xi_\downarrow(\row_A(x)))=\cyc(G(\xi_\downarrow(x)))$ for all $x\in\pi_\downarrow(S_{2n})$. We will show that the image of $\pi_\downarrow^B(B_n)$ under $G\circ\xi_\downarrow$ is $\Xi_{2n-1}$. \cref{prop:rowArowB} tells us that the map $\row_B\colon\pi_\downarrow^B(B_n)\to \pi_\downarrow^B(B_n)$ is just the restriction of $\row_A$ to $\pi_\downarrow^B(B_n)$, so this will complete the proof. 

Given a set $S\subseteq[2n-1]$, let $2n-S=\{2n-s:s\in S\}$. The proof of \cite[Theorem~3.18]{BarnardReading} asserts (using different notation) that the image of $\pi_\downarrow^B(B_n)$ under $\xi_\downarrow$ is the set of all pairs $(S,2n-S)$ such that $S\subseteq[2n-1]$ (this is also straightforward to verify directly). Therefore, we must check that $G$ maps the set of such pairs bijectively onto $\Xi_{2n-1}$. The number of pairs $(S,2n-S)$ with $S\subseteq [2n-1]$ is $2^{2n-1}$, which is also the size of $\Xi_{2n-1}$; since $G$ is injective, we just need to show that $G(S,2n-S)\in\Xi_{2n-1}$ for all $S\subseteq[2n-1]$. 

As in the proof of \cref{thm:typeAconjecture}, we let $(p_1,\ldots,p_{2n-1})$ be the tuple obtained by listing the odd elements of $[2n-1]$ in increasing order and then listing the even elements of $[2n-1]$ in decreasing order. Let $c_-'=\prod_{i\in[2n-2]\text{ odd}}s_i$. Given $T\subseteq [2n-1]$, let $T^*=[2n-1]\setminus c_-'T$. Recalling the definition of the map $G$, we find that \[G(S,2n-S)=(2n-S)^*\langle p_{2n-1}\rangle S\langle p_{2n-1}\rangle(2n-S)^*\langle p_{2n-2}\rangle S\langle p_{2n-2}\rangle\cdots (2n-S)^*\langle p_1\rangle S\langle p_1\rangle.\] To show that this word is in $\Xi_{2n-1}$, it suffices to check that $(2n-S)^*\langle p_i\rangle=1-S\langle p_{i-n+1}\rangle$ for all $1\leq i \leq 2n-1$ (with indices taken modulo $2n-1$). This is straightforward to verify directly. \end{proof}

\section{Completing the Proof of \cref{Conj:biCamb}}\label{Sec:BiCambrianOther}
In \cref{thm:typeAconjecture,thm:conjecturetypeB}, we resolved \cref{Conj:biCamb} for biCambrian lattices of types $A$ and $B$. The purpose of this brief section is to handle the remaining cases: types $H_3$ and $I_2(m)$. We have checked by computer that the conjecture holds in type $H_3$, so we only need to prove that it holds for $I_2(m)$. 

The Coxeter group $I_2(m)$ is the dihedral group of order $2m$. Let $s_1$ and $s_2$ denote the simple generators of $I_2(m)$. Let $[s_1,s_2]_k$ denote the alternating product $s_1s_2s_1\cdots$ of length $k$ that starts with $s_1$, and let $[s_2,s_1]_k$ denote the alternating product $s_2s_1s_2\cdots$ of length $k$ that starts with $s_2$. The weak order on $I_2(m)$ consists of the two disjoint $(m-1)$-chains \[[s_1,s_2]_1\lessdot[s_1,s_2]_2\lessdot\cdots\lessdot[s_1,s_2]_{m-1}\quad\text{and}\quad[s_2,s_1]_1\lessdot[s_2,s_1]_2\lessdot\cdots\lessdot[s_2,s_1]_{m-1}\] together with the identity element $e$ below both chains and the longest element $w_0$ above both chains.

Consider the bipartite Coxeter element $c=s_1s_2$. The $c$-biCambrian lattice is actually equal to the weak order on $I_2(m)$. (For instance, see \cite[Theorem~1.4]{BarnardReading}, which asserts that the size of the $c$-biCambrian lattice is $2m=|I_2(m)|$.) For $2\leq k\leq m-1$, we have $\row([s_1,s_2]_k)=[s_1,s_2]_{k-1}$ and $\row([s_2,s_1]_k)=[s_2,s_1]_{k-1}$. Furthermore, $\row(s_1)=[s_2,s_1]_{m-1}$, $\row(s_2)=[s_1,s_2]_{m-1}$, $\row(e)=w_0$, and $\row(w_0)=e$. This shows that rowmotion acting on the $c$-biCambrian lattice has one orbit of size $2$ and one orbit of size $2m-2$. See \cref{FigRowTam4}. 

The doubled root poset of type $I_2(m)$ consists of two chains $x_1\lessdot\cdots\lessdot x_{m-2}$ and $y_1\lessdot\cdots\lessdot y_{m-2}$ together with two incomparable elements $z_1$ and $z_2$ satisfying $x_{m-2}\lessdot z_1\lessdot y_1$ and $x_{m-2}\lessdot z_2\lessdot y_1$. It is straightforward to check that rowmotion acting on the lattice of order ideals of this doubled root poset has one orbit of size $2$ and one orbit of size $2m-2$. We illustrate this for $m=5$ in \cref{FigRowTam4}. This proves that \cref{Conj:biCamb} holds in type $I_2(m)$, which completes the proof of the full theorem.

\begin{figure}[ht]
  \begin{center}\raisebox{0.25cm}{\includegraphics[height=4cm]{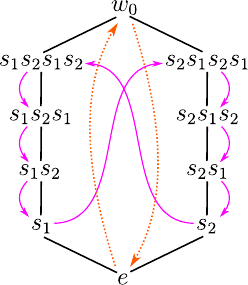}}\qquad\qquad\qquad\includegraphics[height=4.5cm]{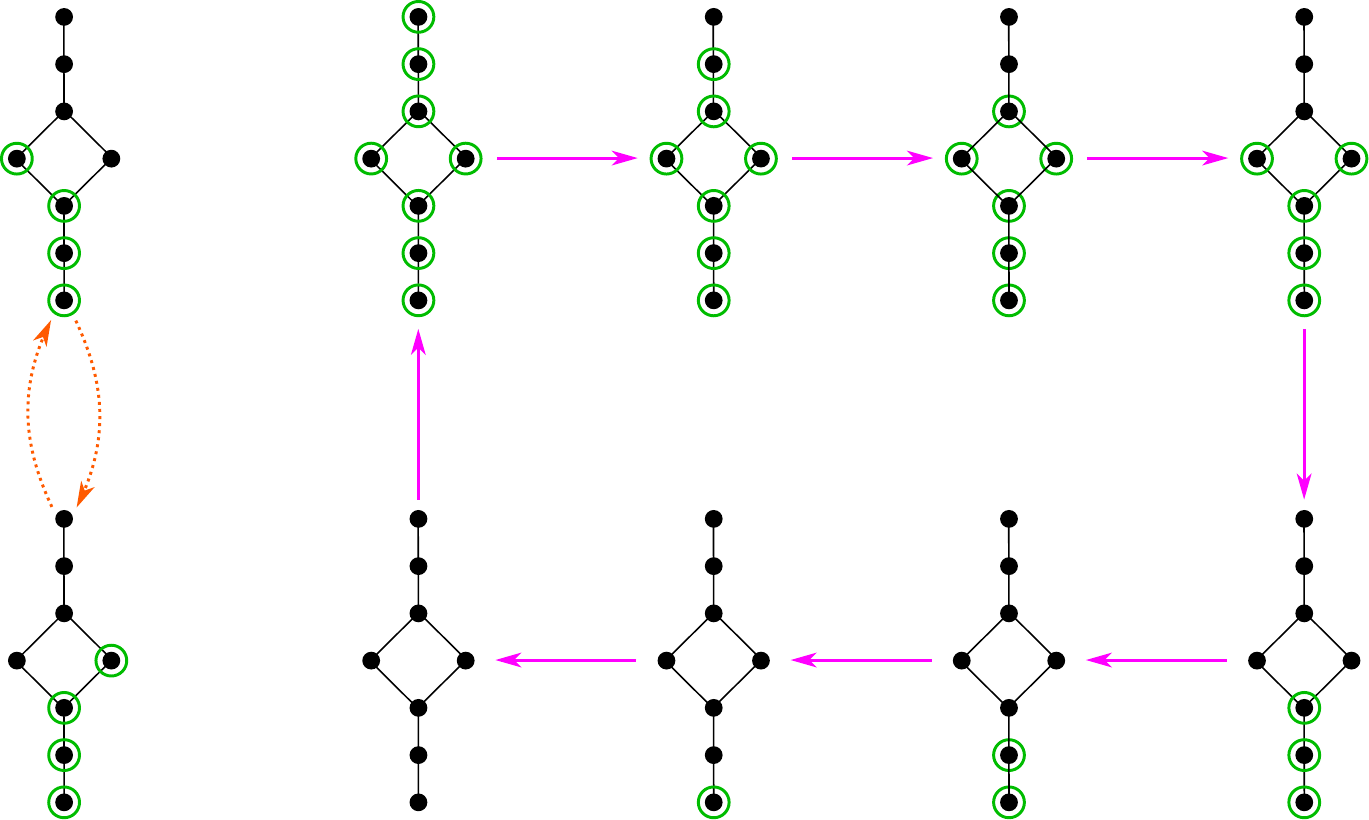}
  \end{center}
  \caption{On the left is the action of rowmotion on the type-$I_2(5)$ $c$-biCambrian lattice; this lattice is the same as the weak order on $I_2(5)$. On the right is the action of rowmotion on $\mathcal J(P)$, where $P$ is the doubled root poset of type $I_2(5)$. Each order ideal in $\mathcal J(P)$ is represented by elements that are circled in green. }\label{FigRowTam4}
\end{figure}

\section{Future Directions}\label{Sec:Conclusion}

It would be interesting to extend our results about rowmotion on $m$-Tamari lattices to the setting of the rational Tamari lattices $\Tam(a,b)$. In particular, recall \Cref{THING2,THING}, which state that $(\Tam(a,b),\row,\Cat^{(a,b)}(q))$ exhibits the cyclic sieving phenomenon and that the down-degree statistic on $\Tam(a,b)$ is homomesic for rowmotion with average $(a-1)(b-1)/(a+b-1)$.

In general, the down-degree statistic on a $\nu$-Tamari lattice need not be homomesic for rowmotion. However, data suggests that it is ``almost'' homomesic in the sense that the averages of $\ddeg$ on the orbits are all very close to each other. For example, if $\nu=\text{NE}^3\text{NE}^2\text{NE}^6\text{N}$, then rowmotion has orbits of sizes $7,7,12,18,18,18,80$; the average values of $\ddeg$ along these orbits are (approximately) $2.286, 2.286, 2.25, 2.278, 2.278, 2.278, 2.263$, respectively. It would be very interesting to have some explanation of why these values are so close. The following conjecture can be seen as predicting a new generalization of the homomesy phenomenon that we call \emph{asymptotic homomesy}. At the moment, this notion is fairly vague, and we do not wish to give a formal definition of it.

\begin{conjecture}
Let $\nu_0$ be a lattice path that has $a$ north steps and $b$ east steps. For each integer $t\geq 1$, let $\nu_0^t$ be the lattice path obtained by concatenating $\nu_0$ (viewed as a word over $\{\N,\E\}$) with itself $t$ times. Let $\Orb_{\row}(\Tam(\nu_0^t))$ be the set of orbits of $\row\colon\Tam(\nu_0^t)\to\Tam(\nu_0^t)$. We have \[\lim_{t\to\infty}\max_{\mathcal O\in \Orb_{\row}(\Tam(\nu_0^t))}\left\lvert\frac{1}{t}\cdot\frac{1}{|\mathcal O|}\sum_{x\in\mathcal O}\ddeg(x)-\frac{ab}{a+b}\right\rvert=0.\]
\end{conjecture}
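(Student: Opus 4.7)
The plan is to split the conjecture into two ingredients: (a) the global average of $\ddeg$ over $\Tam(\nu_0^t)$ is $\tfrac{ab}{a+b}t + o(t)$, and (b) every orbit $\mathcal O$ has average $\tfrac{1}{|\mathcal O|}\sum_{\mathsf b\in\mathcal O}\ddeg(\mathsf b)$ equal to the global average up to $o(t)$. For (a), since $\ddeg(\mathsf b)=|\Delta(\mathsf b)|$, the total $\sum_{\mathsf b} \ddeg(\mathsf b)$ counts cover relations in the Hasse diagram of $\Tam(\nu_0^t)$. Only non-fixed positions can contribute descents, and these comprise the fraction $\tfrac{b}{a+b}$ of the total length $L=(a+b)t$. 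A global double-count of cover relations should identify the descent density among non-fixed positions as $\tfrac{a}{a+b}$, matching the target constant $\tfrac{ab}{a+b}\,t$; in the $m$-Tamari case this is implicit in Theorem~\ref{thm:homomesy}, and the computation should extend to arbitrary $\nu_0^t$ since the non-fixed descent density is a local feature of the path.

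For (b) I would try to mimic the strategy of Section~\ref{Sec:mTamari}: construct an at least approximate equivariant bijection $\Psi_{\nu_0}\colon\Tam(\nu_0^t)\to\mathcal C_{\nu_0,t}$ from $\nu_0^t$-bracket vectors to some set of combinatorial objects equipped with a cyclic shift on which $\ddeg$ pulls back to a local, $(a+b)$-periodic statistic $Q$. Were this bijection exact and equivariant, the orbit-averaging argument at the end of the proof of Theorem~\ref{thm:homomesy} would give exact homomesy with the claimed constant; for general $\nu_0$, one should instead expect an approximate bijection whose failure of equivariance is concentrated near the first and last copies of $\nu_0$, so that the resulting discrepancy in the orbit average of $\ddeg$ is $O(1)$ per orbit and therefore $o(t)$ after normalization, yielding the asymptotic claim.

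The main obstacle will be constructing such a bijection, or finding a replacement for it, for arbitrary $\nu_0$. The maps $\Theta_r$ appearing in Theorem~\ref{thm:rowmotionbracket} depend on arbitrarily long-range data through $\zeta_r$, and this non-locality is precisely why the $m$-Tamari proof had to lean on the specific $121$-avoiding combinatorics of shuffle noncrossing partitions. I see two plausible routes forward. The first is structural: realize $\Tam(\nu_0^t)$ as a sublattice of a larger classical Tamari lattice on which exact homomesy is already accessible via Theorem~\ref{cor:mainnuTamari} with $m=1$, then control the defect incurred by restricting to the sublattice through a careful analysis of the lattice projection. The second is analytical: prove a direct concentration estimate by bounding, for each position $i$, the fluctuation of $\tfrac{1}{|\mathcal O|}|\{\mathsf b\in\mathcal O:i\in\Delta(\mathsf b)\}|$ around its global mean, using a mixing-type property of $\row$ and summing in $i$. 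Either route will likely demand combinatorics beyond what is developed in this paper.
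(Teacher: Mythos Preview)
The statement you are attempting to prove is a \emph{conjecture} in the paper (Conjecture~10.1, in the Future Directions section), not a theorem; the paper offers no proof, and indeed explicitly flags the notion of ``asymptotic homomesy'' as vague and without a formal definition. There is therefore no paper proof to compare your proposal against.

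As to the proposal itself: it is a research outline, not a proof, and you essentially say so yourself. Part (a) is plausible but not established here---the claim that the descent density among non-fixed positions equals $a/(a+b)$ for arbitrary $\nu_0^t$ is asserted rather than proved, and the remark that ``the computation should extend'' from the $m$-Tamari case is exactly the sort of step that needs a genuine argument. Part (b) is the heart of the matter, and both routes you sketch (an approximate equivariant bijection with boundary-localized defect, or a direct mixing/concentration estimate for $\row$) are speculative. The non-locality of $\zeta_r$ that you correctly identify is a real obstruction: it means that the influence of the ``first and last copies of $\nu_0$'' need not be confined to $O(1)$ positions, so the claim that the defect in equivariance is $O(1)$ per orbit is unsupported. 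Likewise, no mixing property of rowmotion on general $\nu$-Tamari lattices is known. In short, the proposal identifies reasonable targets but does not supply the key ideas needed to hit them; this is consistent with the statement's status as an open conjecture.
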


Recall from \cref{Sec:Homomesy} that a statistic on a set $X$ is called \emph{homometric} for a function $f\colon X\to X$ if it always has the same sum over $f$-orbits of the same cardinality. 

\begin{conjecture}
Let $\nu$ be a lattice path. The down-degree statistic $\ddeg\colon\Tam(\nu)\to\mathbb R$ is homometric for the rowmotion operator $\row\colon\Tam(\nu)\to\Tam(\nu)$.
\end{conjecture}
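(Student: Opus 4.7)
The plan is to extend the equivariant-bijection strategy behind Theorem~\ref{thm:homomesy} from the $m$-Tamari setting to arbitrary $\nu$-Tamari lattices. For the $m$-Tamari case we produced a bijection $\Psi\colon\Tam_n(m)\to\ShNC^{(m)}(n)$ satisfying $\Psi\circ\row=\Rot\circ\Psi$, and then averaged a block-indexed indicator over cyclic rotations to evaluate the orbit sum of $\ddeg(\mathsf b)=|\Delta(\mathsf b)|$. My first task would be to identify the correct ``$\nu$-shuffle noncrossing'' target set together with an (at least orbit-wise) cyclic action that realizes rowmotion. Using Theorem~\ref{thm:rowmotionbracket}, I would compute rowmotion orbits and the multisets $\{\ddeg(\mathsf b):\mathsf b\in\mathcal O\}$ over many paths $\nu$ to locate the right combinatorial model and, as a sanity check, confirm that the orbit sum of $\ddeg$ depends only on $|\mathcal O|$.

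The most natural candidate is a direct generalization of the successor construction of Section~\ref{Sec:mTamari}. For $\mathsf b\in\Tam(\nu)$ with fixed positions $f_0<f_1<\cdots<f_n$, I would define a partial function $\varphi_{\mathsf b}$ by setting $\varphi_{\mathsf b}(i)$, when $i\in\Delta(\mathsf b)$, to be the smallest $j>i$ with $\mathsf b_j=\mathsf b_i$ and with $i$ and $j$ in the same relative position within their respective segments $(f_{k-1},f_k]$; for the index just before a newly occurring value, one would send $i$ to the corresponding fixed position $f_k$ as in Case~1 of Section~\ref{Sec:mTamari}; otherwise $\varphi_{\mathsf b}(i)=\emptyset$. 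Using Remark~\ref{rem:121} and Remark~\ref{rem:decreasing}, one would hope to show that the resulting partition $\Psi_\nu(\mathsf b)$ is noncrossing, that $\Psi_\nu$ is injective with a characterizable image $\ShNC^\nu$, and that $\ShNC^\nu$ is invariant under a canonical ``shift'' operator $\Rot_\nu$ that reduces to $\Rot$ in the $m$-Tamari case.

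The heart of the approach is the equivariance $\Psi_\nu\circ\row=\Rot_\nu\circ\Psi_\nu$, and I expect this to be the main obstacle. Its proof in the uniform case hinges on Lemmas~\ref{lem:1modm+1}, \ref{lem:Claim1}, and~\ref{lem:Claim2}, whose arguments rely heavily on the identity $f_k=(m+1)k$ and on case analyses modulo $m+1$. For irregular $\nu$, the wrap-around behavior near fixed positions and the interaction between consecutive segments of different lengths require a much more delicate tracking of how $\Delta(\mathsf b)$ evolves under the slow-motion maps $\Theta_r$ of Theorem~\ref{thm:rowmotionbracket}. It is conceivable that no single globally defined shift models rowmotion for all $\nu$; a fallback would be to settle for a $\Rot_\nu$ that is cyclic only \emph{within} each rowmotion orbit of $\Tam(\nu)$, which is strictly weaker than what is proved in Theorem~\ref{thm:nuTamariMain} but is still sufficient for the present conjecture.

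Once equivariance is established (in either the global or the orbit-wise form), homometry of $\ddeg$ follows by the averaging argument in the proof of Theorem~\ref{thm:homomesy}: the sum $\sum_{\mathsf b\in\mathcal O}|\Delta(\mathsf b)|$ rewrites as a sum over non-maximal elements of blocks of $\Psi_\nu(\mathsf b)$ lying in prescribed segment-relative positions, and the cyclic invariance forces each block to contribute a quantity depending only on $|\mathcal O|$ and on global invariants of $\nu$. Since the number of blocks of $\Psi_\nu(\mathsf b)$ is a rowmotion invariant and is determined by the orbit size (by the argument of Lemma~\ref{lem:mn+1blocks} together with the orbit-length information), the total orbit sum is a function of $|\mathcal O|$ alone. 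Under favorable spacing this specializes to the full homomesy predicted by Conjecture~\ref{conj:abHomomesy}; in general it yields only the homometry asserted by the present conjecture, and moreover it suggests a concrete formula for the orbit sum in terms of the structural parameters of $\nu$.
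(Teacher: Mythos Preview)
The statement you are addressing is a \emph{conjecture} in the paper's Future Directions section; the paper offers no proof, so there is nothing to compare your argument against. What you have written is not a proof but a research plan, and you are candid about this: phrases like ``one would hope to show,'' ``I expect this to be the main obstacle,'' and ``it is conceivable that no single globally defined shift models rowmotion'' make clear that the central steps are unestablished.

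The genuine gaps are exactly where you flag them. First, the existence of a well-defined target set $\ShNC^\nu$ with a cyclic action $\Rot_\nu$ and an equivariant bijection $\Psi_\nu$ is entirely speculative for general $\nu$; the arguments of Lemmas~\ref{lem:1modm+1}--\ref{lem:Claim2} lean hard on the uniformity $f_k=(m+1)k$, and you give no mechanism for handling segments of varying lengths. Second, even your fallback of an ``orbit-wise'' cyclic model is not obviously sufficient: to deduce homometry you need the orbit sum of $\ddeg$ to be determined by $|\mathcal O|$ alone, but your sketch assumes the number of blocks of $\Psi_\nu(\mathsf b)$ is determined by the orbit size, which is not justified (in the $m$-Tamari case the block count is a constant $mn+1$ independent of the orbit, not a function of $|\mathcal O|$). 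Without a concrete construction and verified equivariance, the averaging step of Theorem~\ref{thm:homomesy} cannot be invoked. As it stands, this is a reasonable outline of an approach one might attempt, but it does not constitute a proof of the conjecture.
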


Finally, we remark that it would be nice to have a more conceptual, type-independent proof of \cref{Conj:biCamb}. 

\section*{Acknowledgments}
This research was conducted at the University of Minnesota Duluth Mathematics REU and was supported in part by NSF-DMS Grant 1949884 and NSA Grant H98230-20-1-0009. Colin Defant was supported by the National Science Foundation under Award
No. DGE–1656466 and Award No. 2201907, by a Fannie and John Hertz Foundation Fellowship,
and by a Benjamin Peirce Fellowship at Harvard University. James Lin was supported by the MIT CYAN Mathematics Undergraduate Activities Fund. Any opinions, findings, and conclusions or recommendations expressed in this material are those of the authors and do not necessarily reflect the views of the National Science Foundation. We thank Joe Gallian, Bruce Sagan, and Nathan Williams for helpful comments. We thank the anonymous referees for reading carefully through our paper and making several useful comments.


\begin{thebibliography}{99}

\bibitem{Armstrong}
D. Armstrong, Generalized noncrossing partitions and combinatorics of Coxeter groups.
\emph{Mem. Amer. Math. Soc.}, {\bf 202} (2009).

\bibitem{AST}
D. Armstrong, C. Stump, and H. Thomas, A uniform bijection between nonnesting and noncrossing partitions. \emph{Trans. Amer. Math.
Soc.}, {\bf 365} (2013), 4121--4151.

\bibitem{Barnard}
E. Barnard, The canonical join complex. \emph{Electron. J. Combin.}, {\bf 26} (2019).

\bibitem{BarnardReading}
E. Barnard and N. Reading, Coxeter-biCatalan combinatorics. \emph{J. Algebraic Combin.}, {\bf 47} (2018), 241--300. 

\bibitem{Bergeron}
F. Bergeron and L.-F. Pr\'eville-Ratelle, Higher trivariate diagonal harmonics via generalized Tamari posets.\ \emph{J. Comb.}, {\bf 3} (2012), 317--341. 

\bibitem{Birkhoff}
G. Birkhoff, Rings of sets.\ \emph{Duke Math. J.}, {\bf 3} (1937), 443--454.  

\bibitem{BjornerBrenti}
A. Bj\"orner and F. Brenti, Combinatorics of Coxeter groups, vol. 231 of Graduate Texts in Mathematics.\ Springer, 2005. 

\bibitem{BodnarRhoades}
M. Bodnar and B. Rhoades, Cyclic sieving and rational Catalan theory.\ \emph{Electron. J. Combin.}, {\bf 23} (2016). 

\bibitem{BousquetRep}
M. Bousquet-M\'elou, G. Chapuy, and L.-F. Pr\'eville-Ratelle, The representation of the symmetric group on $m$-Tamari intervals.\ \emph{Adv. Math.}, {\bf 247} (2013), 309--342. 

\bibitem{BousquetIntervals}
M. Bousquet-M\'elou, E. Fusy, and L.-F. Pr\'eville-Ratelle, The number of intervals in the $m$-Tamari lattices. \emph{Electron. J. Combin.}, {\bf 18} (2011). 

\bibitem{Brouwer}
A. Brouwer, On dual pairs of antichains. Stichting Mathematisch Centrum.
Zuivere Wiskunde (1975), 1--8.

\bibitem{BrouwerSchriver}
A. Brouwer and A. Schrijver, On the period of an operator, defined on antichains. Stichting Mathematisch Centrum. Zuivere Wiskunde (1974).

\bibitem{Cameron}
P. J. Cameron and D. G. Fon-der-Flaass, Orbits of antichains revisited. \emph{European J. Combin.} {\bf 16} (1995), 545--554.

\bibitem{Ceballos2}
C. Ceballos, A. Padrol, and C. Sarmiento, The $\nu$-Tamari lattice via $\nu$-trees, $\nu$-bracket vectors, and subword complexes. \emph{Electron. J. Combin.}, {\bf 27} (2020). 

\bibitem{Chatel}
G. Ch\^atel and V. Pons, Counting smaller elements in the Tamari and $m$-Tamari lattices. \emph{J. Combin. Theory Ser. A}, {\bf 134} (2015), 58--97. 

\bibitem{Cuntz}
M. Cuntz and C. Stump, On root posets for noncrystallographic root systems. \emph{Math. Comp.}, {\bf 84} (2015), 485--503.  

\bibitem{DefantMeeting}
C. Defant, Meeting covered elements in $\nu$-Tamari lattices. \emph{Adv. Appl. Math.}, {\bf 134} (2022). 
\bibitem{Semidistrim}
C. Defant and N. Williams, Semidistrim lattices. \emph{Forum Math. Sigma}, {\bf 11} (2023). 

\bibitem{DezaFukada} 
M. M. Deza and K. Fukuda, Loops of clutters, In: Coding theory and design theory, Springer, 1990, 72--92.

\bibitem{Duchet}
P. Duchet, Sur les hypergraphes invariants. \emph{Discrete Math.}, {\bf 8} (1974), 269--
280.

\bibitem{Dvoretzky}
A. Dvoretzky and Th. Motzkin, A problem of arrangements. \emph{Duke Math. J.}, {\bf 14} (1947), 305--313.

\bibitem{Edelman}
P. H. Edelman, Chain enumeration and non-crossing partitions. \emph{Discrete Math.}, {\bf 31} (1980), 171--180. 

\bibitem{ElizaldeFences}
S. Elizalde, M. Plante, T. Roby, and B. E. Sagan, Rowmotion on fences. \emph{Algebr. Comb.}, {\bf 6} (2023), 17--36.  

\bibitem{FonderFlass}
D. G. Fon-der-Flaass, Orbits of antichains in ranked posets. \emph{European J.
Combin.} {\bf 14} (1993), 17--22.

\bibitem{HopkinsCDE}
S. Hopkins, The CDE property for skew vexillary permutations. \emph{J. Combin. Theory Ser. A}, {\bf 168} (2019), 164--218. 

\bibitem{Humphreys}
J. E. Humphreys, Reflection groups and Coxeter groups.\ Cambridge University Press, 1990. 

\bibitem{Kreweras}
G. Kreweras, Sur les partitions non crois\'ees d'un cycle. \emph{Discrete Math.}, {\bf 1} (1972), 333--350.

\bibitem{Muller}
F. M\"uller-Hoissen, J. M. Pallo, and J. Stasheff. Associahedra, Tamari lattices, and related structures, vol. 299 of Progress in Mathematics.\ Birkh\"auser, 2012.  

\bibitem{Panyushev}
D. I. Panyushev, On orbits of antichains of positive roots. \emph{European J.
Combin.} {\bf 30} (2009), 586--594.

\bibitem{PrevilleViennot}
L.-F. Pr\'eville-Ratelle and X. Viennot, An extension of Tamari lattices. \emph{Trans. Amer. Math. Soc.} {\bf 369} (2017), 5219--5239. (Assigned the incorrect title ``The enumeration of generalized
Tamari intervals'' by the journal.)

\bibitem{ProppRoby}
J. Propp and T. Roby, Homomesy in products of two chains.\ \emph{Electron. J. Combin.}, {\bf 22} (2015). 

\bibitem{ReadingCambrian}
N. Reading, Cambrian lattices.\ \emph{Adv. Math.}, {\bf 205} (2006), 313--353. 

\bibitem{ReadingClusters}
N. Reading, Clusters, Coxeter-sortable elements and noncrossing partitions. \emph{Trans. Amer. Math. Soc.}, {\bf 359} (2007), 5931--5958.

\bibitem{Roby}
T. Roby, Dynamical algebraic combinatorics and the homomesy phenomenon.` In: \emph{Recent trends in combinatorics}, vol. 159 of IMA Math. Vol. Appl., 619--652. Springer, 2016. 

\bibitem{CSPDefinition}
V. Reiner, D. Stanton, and D. White, The cyclic sieving phenomenon. \emph{J. Combin. Theory Ser. A}, {\bf 108} (2004), 17--50. 

\bibitem{RushShi}
D. B. Rush and X. Shi, On orbits of order ideals of minuscule posets.\ \emph{J. Algebraic Combin.}, {\bf 33} (2013), 545--569. 

\bibitem{RushWang}
D. B. Rush and K. Wang, On orbits of order ideals of minuscule posets II:
homomesy.\ arXiv:1509.08047.

\bibitem{Stanley2}
R. P. Stanley, Enumerative combinatorics, vol.\ 2.\ Cambridge University Press, 1999.

\bibitem{StanleyPromotion}
R. P. Stanley, Promotion and evacuation.\ \emph{Electron. J. Combin}, {\bf 16} (2009). 

\bibitem{StrikerWilliams}
J. Striker and N. Williams, Promotion and rowmotion. \emph{European J. Combin.}, {\bf 33} (2012), 1919--1942. 

\bibitem{Tamari}
D. Tamari, The algebra of bracketings and their enumeration.\ \emph{Nieuw Archief voor Wiskunde}, {\bf 10} (1962), 131--146.

\bibitem{Thomas}
H. Thomas, An analogue of distributivity for ungraded lattices.\ \emph{Order} {\bf 23} (2006),
249--269.

\bibitem{ThomasWilliams}
H. Thomas and N. Williams, Rowmotion in slow motion.\
\emph{Proc. Lond. Math. Soc.}, {\bf 119} (2019), 1149--1178. 
\end{thebibliography}
\end{document}